\newcommand{\Z}{\mathbb{Z}}
\newcommand{\N}{\mathbb{N}}
\newcommand{\R}{\mathbb{R}}
\newcommand{\C}{\mathbb{C}}
\renewcommand{\L}{\LL^2}
\newcommand{\LL}{\mathsf{L}}
\renewcommand{\H}{\mathsf{H}}
\newcommand{\HS}{\mathscr{H}}
\renewcommand{\a}{\mathfrak{a}}
\newcommand{\A}{{\mathscr{A}}}
\newcommand{\Res}{{\mathscr{R}}}
\newcommand{\J}{\mathscr{J}} 
\newcommand{\B}{\mathscr{B}}
\renewcommand{\rho}{\varrho}
\DeclareMathOperator{\dom}{dom}
\DeclareMathOperator{\supp}{supp}
\DeclareMathOperator{\dist}{dist}
\DeclareMathOperator{\diam}{diam}
\DeclareMathOperator{\capty}{\mathscr{C}}
\DeclareMathOperator{\area}{area}
\DeclareMathOperator{\vol}{vol}
\DeclareMathOperator{\conv}{conv}
\newcommand{\la}{\langle}
\newcommand{\ra}{\rangle}
\newcommand{\eps}{\varepsilon}
\newcommand{\e}{_{\varepsilon}}
\newcommand{\ke}{_{k,\varepsilon}}
\newcommand{\al}{\alpha}
\newcommand{\ga}{\gamma}
\renewcommand{\d}{\,\mathrm{d}}
\newcommand{\ds}{\displaystyle}
\newcommand{\Id}{\mathrm{I}}
\newcommand{\cupl}{\bigcup\limits}
\newcommand{\suml}{\sum\limits}
\newcommand{\supl}{\sup\limits}
\newcommand{\wt}{\widetilde}
\newcommand{\wh}{\widehat}
\newcommand{\ceq}{\coloneqq}
\newcommand{\restr}{\restriction}
\theoremstyle{plain}
\newtheorem{theorem}{Theorem}[section]
\newtheorem*{theorem*}{Theorem}
\newtheorem{lemma}[theorem]{Lemma}
\newtheorem*{lemma*}{Lemma}
\newtheorem{proposition}[theorem]{Proposition}
\newtheorem{corollary}[theorem]{Corollary}
\theoremstyle{remark}
\newtheorem{remark}[theorem]{Remark}
\newtheorem*{remark*}{Remark} 
\newtheorem*{example*}{Example} 
\theoremstyle{definition}
\numberwithin{equation}{section}
\numberwithin{figure}{section}
\title
[Operator estimates for Neumann sieve problem]
{Operator estimates for Neumann sieve problem}
\author[Andrii Khrabustovskyi]{Andrii Khrabustovskyi\,$^{1,2}$}
\address{$^1$ Department of Physics, Faculty of Science, University of
	Hradec Kr\'{a}lov\'{e}, Czech Republic} 
\address{$^2$ Department of Theoretical Physics,
	Nuclear Physics Institute of the Czech Academy of Sciences, \v{R}e\v{z}, Czech Republic} 
\email{andrii.khrabustovskyi@uhk.cz}
\begin{document}
	
	\begin{abstract}
		Let $\Omega$ be a domain in $\mathbb{R}^n$, 
		$\Gamma$ be a hyperplane intersecting $\Omega$, $\varepsilon>0$ be a small parameter, and
		$D_{k,\varepsilon}$, $k=1,2,3\dots$ be a family of small ``holes'' in $\Gamma\cap\Omega$; when $\varepsilon \to 0$, the number of holes  tends to infinity, while 
		their diameters tends to zero. Let $\mathscr{A}_\varepsilon$ be the Neumann Laplacian in the perforated domain $\Omega_\varepsilon=\Omega\setminus\Gamma_\varepsilon$, where
		$\Gamma_\varepsilon=\Gamma\setminus (\cup_k D_{k,\varepsilon})$ (``sieve'').
		It is well-known that if the sizes of holes are carefully chosen, $\mathscr{A}_\varepsilon$ converges in the strong resolvent sense to the Laplacian on $\Omega\setminus\Gamma$ subject to the so-called $\delta'$-conditions on $\Gamma$.
		In the current work we improve this result: under rather general assumptions on 
		the shapes and locations of the holes we derive estimates on the rate of convergence in terms of $L^2\to L^2$ and $L^2\to H^1$ operator norms; in the latter case a special corrector is required.  
	\end{abstract}
	
	\keywords{homogenization; perforated domain; Neumann sieve; resolvent convergence; operator estimates; spectrum}
	
	\subjclass{35B27, 35B40, 35P05, 47A55}
	
	\maketitle

	\section{Introduction}
	
	One of the main directions in homogenization theory concerns  boundary value problems in perforated domains of the form $\Omega\e=\Omega\setminus\Gamma\e$, where $\Omega\subset\R^n$ is fixed, while the  set $\Gamma\e$  depends on a small parameter $\eps>0$ and its geometry is getting more and more complicated as $\eps\to 0$. Homogenization theory is aimed to construct an effective (homogenized) boundary value problem in the unperturbed domain $\Omega$ such that its solutions approximate  solutions of the initial problem in $\Omega\e$ as $\eps\to 0$.
	
	The typical example of such a domain $\Omega\e$ is the one with a lot of tiny holes, that is $\Gamma\e=\cup_{k}D\ke$, where $D\ke\subset\Omega$ are pairwise disjoint compact sets whose  
	diameters  tends to zero as $\eps\to 0$, while their number (per finite volume)  tends to infinity. The holes $D\ke$ may be distributed in the entire volume or  along some hypersurface.  
	
	In the current work we consider the perforation $\Gamma\e$ of another type -- the one having the form of a ``sieve'', and revisit the so-called 
	\emph{Neumann sieve problem} studying Neumann Laplacian in such a perforated domain $\Omega\e\setminus\Gamma\e$. In the next subsection we recall the problem setting  and known results.

	\subsection{Neumann sieve problem}\label{subsec:11}
	
	Let $\Omega$ be a domain in $\R^n$, and
	$\Gamma$ be a hyperplane intersecting 
	$\Omega$ and dividing it on two subsets $\Omega^+$ and $\Omega^-$.
	Further, let $\eps>0$ be a small parameter, and  $\{D\ke,\ k=1,2,3\dots\}$ be a family of 
	small subsets of $\Gamma\cap\Omega$ (``holes''). 
	We set 
	$$
	\Omega\e=\Omega^+\cup\Omega^-\cup \left(\cupl_{k\in\N}D\ke\right)=\Omega\setminus\Gamma\e,\quad\text{where }
	\Gamma\e=\Gamma\setminus\left(\cupl_k D\ke\right)\ \text{(``sieve'')}.
	$$
	Since the set $\Gamma\e$ has  Lebesgue measure zero,  
	the Hilbert spaces $\L(\Omega\e)$ and $\L(\Omega)$  coincide. 
	In the following, we will use  {the same notation $\HS$} for both these spaces.

	\begin{figure}[t]
		\begin{picture}(200,140)
			\includegraphics{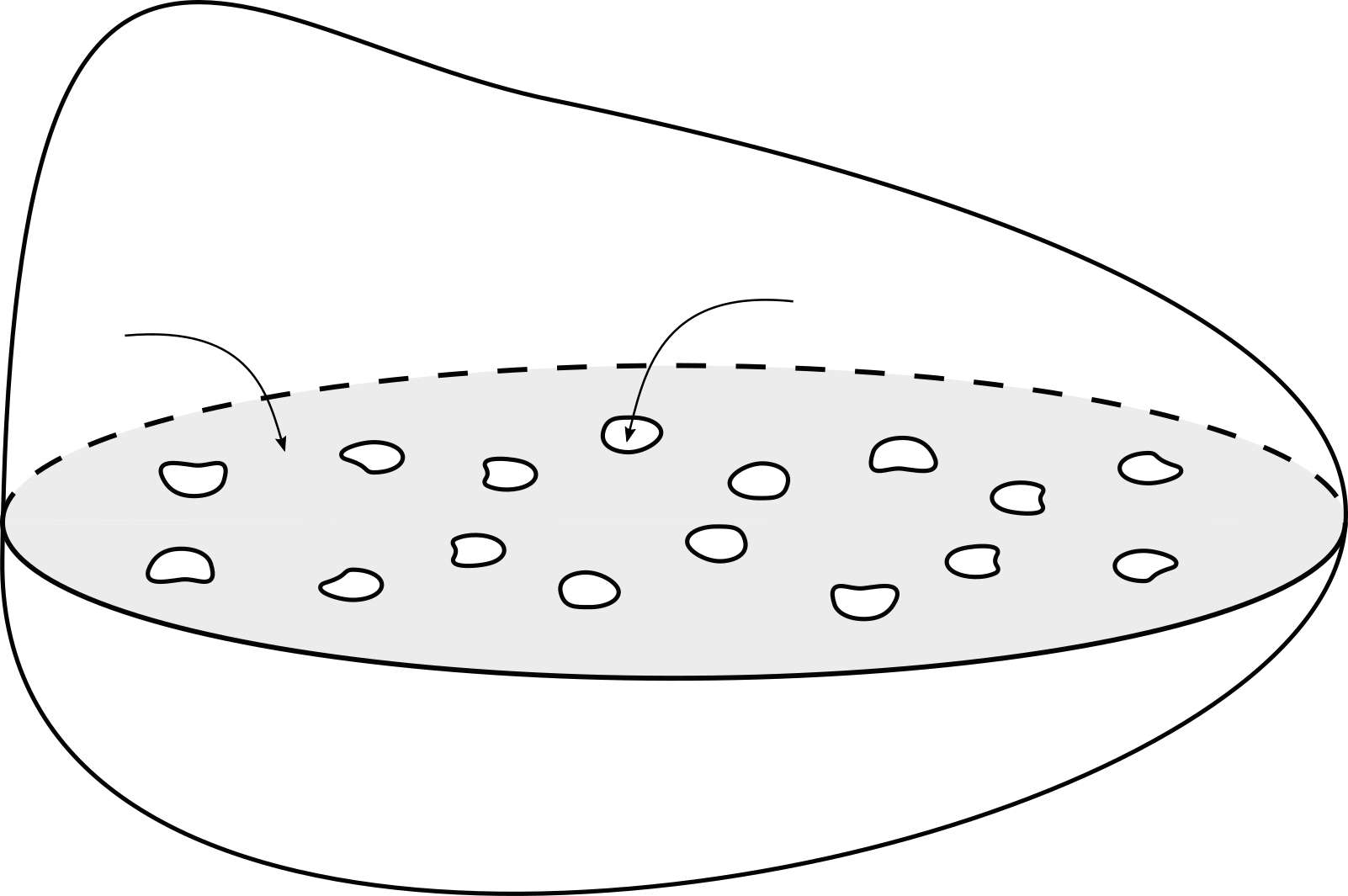}
			
			\put(-160,112){$\Omega^+$}
			\put(-160,12){$\Omega^-$}
			\put(-187,78){$\Gamma\e$}
			\put(-78,82){$D\ke$}
			
		\end{picture}
		
		\caption{The domain $\Omega\e$}\label{fig1}
	\end{figure} 
	
	Let $f\in\HS$.
	We consider the following problem in $\Omega\e$:
	\begin{gather}\label{BVPe}
		\text{Find }u\e\in\H^1(\Omega\e):\qquad
		( u\e, v)_{\H^1(\Omega\e)}=(f,v)_{\HS},\quad
		\forall v\in\H^1(\Omega\e).
	\end{gather}
	It is well-known that this problem has the unique solution $u\e$, and  the mapping
	$\Res\e:f\mapsto u\e $  defines a
	bounded self-adjoint  invertible operator in $\HS$. The operator $\A\e=\Res\e^{-1}-\Id$ is called the \emph{Neumann Laplacian} in $\Omega\e$ (hereinafter, the notation $\Id$ stands either for the identity operator or the identity matrix).
	
	Homogenization theory is aimed to describe the asymptotic behavior of the solution $u\e=(\A\e+\Id)^{-1}f$ 
	to the problem \eqref{BVPe} as $\eps\to 0$, when the number of holes tends to infinity, while 
	their diameters tends to zero. 
	
	Assume for simplicity that the holes $D\ke$ are identical (up to a rigid motion) and  distributed periodically along the $\eps$-periodic lattice on $\Gamma$.  It is well-known \cite{At84,AP87,Mu85,Pi87,MS66,Da85,Ono06,CDGO08,MK74,Kh71} (for more details see the overview of  existing literature in Subsection~\ref{subsec:13}) that if the size of the holes is carefully chosen, namely,  one has
	\begin{gather}\label{G}
		\exists\lim_{\eps\to 0}{\capty\e\over  4\eps^{n-1}}=\gamma\ge 0,\quad\text{where }
		\capty\e\text{ is the capacity of }D\ke,
	\end{gather}
	then 
	\begin{gather}\label{L2strong}
		\|u\e-u\|_{\HS}\to 0\text{ as }\eps\to 0,
	\end{gather}
	and the limiting function $u\in\H^1(\Omega\setminus\Gamma)$ is the unique solution to the following problem:
	\begin{gather}\label{BVPh}
		\text{Find }u\in\H^1(\Omega\setminus\Gamma):\quad
		(u ,v)_{\H^1(\Omega\setminus\Gamma)}+(\gamma [u] ,[v])_{\L(\Gamma\cap\Omega)} =(f,v)_{\HS},\ 
		\forall v\in \H^1(\Omega\setminus\Gamma).
	\end{gather}
	Here $[\cdot]$ stands for the jump of   enclosed function across $\Gamma$.
	It is easy to see that 
	$u$ is a weak solution to the boundary value problem  
	\begin{align*}
		-\Delta u^++ u^+=f^+&\text{\quad on }\Omega^+,\\
		-\Delta u^-+ u^-=f^-&\text{\quad on }\Omega^-,\\
		- {\partial u^+\over\partial  {\nu^+}}   
		=
	{\partial u^-\over\partial  {\nu^-}}
		=
		\gamma(u^+-u^-)
		&
		\text{\quad on }\Gamma\cap\Omega,\\
		{\partial u \over\partial  \nu}=0&\text{\quad on }\partial\Omega,
	\end{align*} 
	where    $u^\pm=u\restriction_{\Omega^\pm}$, $f^\pm=f\restriction_{\Omega^\pm}$, 
	${\partial  \over\partial   \nu^\pm }$ is the derivative along the outward (with respect to $\Omega^\pm$) pointing normal to $\Gamma$, 
	${\partial_\nu}$ is the derivative along the unit normal to $\partial\Omega$.
	We define the homogenized operator $\A$ by $\A=\Res^{-1}-\Id$, where the bounded invertable operator  $\Res$ in $\HS$ is given by
	$ \Res:f\mapsto u  $. The operator is self-adjoint and non-negative. 
	In the recent literature (see, e.g., \cite{BEL14,BLL13}) the operator $\A$ is  called 
	\emph{Schr\"{o}dinger with $\delta'$-interaction of the strength $\gamma$}.
	
	Similar result  holds if $\Gamma$ is  a closed smooth surface:
	under suitable assumptions on the holes $D\ke\subset\Gamma$, which resemble \eqref{G},
	the homogenized problem has the form \eqref{BVPh} with 
	some non-negative and, in general, non-constant  function $\gamma$.

	\subsection{Main results}\label{subsec:12}
	
	In the aforementioned papers the  convergence   \eqref{L2strong} was established for a fixed   $f\in\HS$.  
	In the language of operator theory this means that $\A\e$ converges to the operator $\A$
	as $\eps\to 0$ in \emph{the strong resolvent sense}.
	In the current work we are aimed to upgrade \eqref{L2strong} to
	\emph{the norm resolvent convergence} and derive estimates on its rate. In the homogenization literature (see the overview below) they  are usually called \emph{operator estimates}.
	
	Our first result (see Theorem~\ref{th1}) is the estimate 
	\begin{gather*}
		\|u\e-u\|_{\HS}\leq  \mu\e\|f\|_{\HS}
	\end{gather*}
	with some $\mu\e\to 0$.  An immediate consequence of this result is a convergence of spectra of the underlying operators in the Hausdorff metrics (see Corollary~\ref{th3}).
	
	Secondly, besides the convergence in the 
	$\HS\to \HS$ operator norm,  we also establish the convergence in the
	$\HS\to \H^1(\Omega\setminus\Gamma)$ operator norm.
	In this case, to get a reasonable result one  needs  a  special corrector; the corresponding estimate reads   (see Theorem~\ref{th2}):
	\begin{gather*}
		\|u\e-u-w\e^\pm\|_{\H^1(\Omega^\pm)}\leq \mu\e\|f\|_{\HS}.
	\end{gather*}
	The correctors $w\e^\pm\in\H^1(\Omega^\pm)$ are supported in a small neighborhood 
	of $\Gamma$ and satisfy 
	\begin{gather*}
		\|w\e^\pm\|_{\L(\Omega^\pm)}=o(\mu\e)\|f\|_{\HS},\quad
		\|\nabla w\e^\pm\|^2_{\L(\Omega^\pm)}=
		{1\over 2}\|\ga^{1/2}[u]\|^2_{\L(\Gamma)}+\mathcal{O}(\kappa\e)\|f\|^2_\HS,
	\end{gather*}
	with some $\kappa\e\in [0, \mu\e]$.
	
	The assumptions we impose on the 
	shapes and locations of the holes are rather general (i.e., beyond periodic setting). In the last section we present two examples 
	for which these assumptions are indeed satisfied.
	
	We restrict ourselves to the case when  $\Omega$ is  unbounded  and the hyperplane $\Gamma$ belongs to $\Omega$, moreover $\dist(\Gamma,\partial\Omega)>0$. For example, $\Omega$ may coincide with the whole $\R^n$ or $\Omega$ may be an unbounded waveguide-like domain.
	The case when $\Gamma$ intersects $\partial\Omega$ is not 
	considered in this work (except for the case, when the holes are located within a subset 
	$\Gamma'\subset\Gamma\cap\Omega$ such that $\dist(\Gamma',\partial\Omega)>0$ -- we briefly comment on it in Remark~\ref{rem:Gamma'}). The technical difficulties arising when the holes approaches $\partial\Omega\cap\Gamma$ are explained in Remark~\ref{rem:intersection}.

	Our proof is based on the abstract result from \cite{AP21},
	which is a part of the  toolbox developed in \cite{P06,P12} for studying resolvent and spectral convergence of operators in varying Hilbert spaces.
	Originally this toolbox was developed to handle convergence of the Laplace-Beltrami operator on  manifolds which shrink to a graph. It also has shown to be effective for homogenization problems in perforated domains, see \cite{KP18,AP21,KP21}. 
	The main ingredient of the proof is a construction of  a suitable operator from $\H^1(\Omega\setminus\Gamma)$ to $\H^1(\Omega\e)$ (these spaces are the domains of the sesquilinear forms associated with the operators $\A\e$ and $\A$).

	\subsection{Literature}\label{subsec:13}
	
	\subsubsection{Sieve problem}
	Neumann sieve problem was treated first   in \cite{MS66}.
	In this article $\Gamma$ is a closed surface, and the holes $D\ke\subset\Gamma$ obey quite general assumptions (though, resembling \eqref{G}). It was   proven that the Green's function associated 
	with the operator $\A\e$ converges pointwise to the Green's function associated 
	with the operator $\A$ as $\eps\to 0$. The proof in \cite{MS66} is based on methods of potential theory. Later, in \cite{Kh71}   the convergence of the resolvents of $\A\e$ has been proven
	by using variational methods; in this work the author considered sieves with even more general geometry. 
	Both articles \cite{Kh71,MS66} deal with $n=3$. For more details we refer to the monograph \cite[Chapter~3]{MK74}.
	
	In the mid 80's the problem was revisited 
	in \cite{At84,AP87,Mu85,Pi87,Da85}.
	In \cite{At84,Mu85,Pi87,Da85} the sieve $\Gamma\e$ is flat (a subset of a hyperplane) and the holes are distributed periodically, while in \cite{AP87} $\Gamma$
	is a closed surface, and the holes $D\ke\subset\Gamma$ satisfy some general assumptions.
	The proofs in the above papers are based on variational methods.
	We also mention later articles  \cite{Ono06,CDGO08}, where periodic Neumann sieve problem was studied via  unfolding method;  in particular, in \cite{CDGO08} the authors treated not only   Laplacians, but also more general  elliptic operators. 
	
	Finally, we refer to the articles \cite{DelV87,An04} dealing with a sieve having non-zero thickness, i.e. $\Gamma\e$ is thin layer perforated by a lot of narrow passages connecting the upper and lower faces of this layer.
	
	\subsubsection{Operator estimates}
	Derivation of operator estimates  is a rather young topic in homogenization theory originating from the pioneer works  \cite{BS04,BS06,Gr04,Z05a,Z05b,ZP05,Gr06} (see also the overview \cite{ZP16}) concerning  homogenization of periodic elliptic operators with rapidly oscillating coefficients.
	
	In the last decade there appeared a plenty of works where operator estimates were established
	for homogenization problems in domains with a lot of holes.
	The case when holes are distributed in the entire domain was addressed in 
	\cite{Su18,ZP16,ZP05} 
	(Neumann conditions on the boundary of holes),
	\cite{KP18,AP21} (Dirichlet conditions, also \cite{AP21} treats Neumann holes), 
	\cite{KP21} (linear Robin conditions),
	\cite{BK22,Bo22} (Dirichlet and nonlinear Robin conditions).
	In \cite{Su18,Z05a,KP21} the holes are distributed periodically (in \cite{KP18} -- locally periodically), and are identical, while 
	in \cite{AP21,BK22,Bo22} quite general assumptions on sizes and location of holes are imposed.
	The surface distribution of holes was treated in \cite{BCD16,BM21,BM22,GPS13}.
	
	We also mention the closely related works \cite{BC09,BBC10,BBC13,CDD20} and \cite{BCFP13}, where operator estimates were deduced, respectively, for elliptic operators with frequently alternating boundary conditions and for boundary value problems in domains with fast oscillating boundary.

	\section{Setting of the problem and main results}
	\label{sec2}
	
	Let $n\in\N$, $n\ge 2$.
	In the following, 
	$x'=(x^1,\dots,x^{n-1})$ and $x=(x',x^n)$ stand for   the Cartesian coordinates in $\R^{n-1}$ and $\R^{n}$, respectively. Also, by $\B(r,z)$ we denote
	the open ball in $\R^n$ of radius $r>0$ and center $z\in\R^n$.
	By $C,C_1,C_2,\dots$ we denote generic positive constants being independent of $\eps$; note that these constants may   vary from line to line.

	Let $\Gamma$ be a hyperplane given by
	$$
	\Gamma\ceq  \left\{x=(x',x^n)\in\R^n:\ x^n=0\right\},
	$$
	and $\Omega$ be an unbounded Lipschitz domain satisfying
	\begin{gather}\label{h}
		\Gamma\subset\Omega,\quad  \dist(\Gamma,\partial\Omega)>0
	\end{gather}
	We set
	$$
	\Omega^\pm  \ceq\Omega\cap \left\{x=(x',x^n)\in\R^n:\ \pm x^n>0\right\}.
	$$
	We connect $\Omega^+$ and $\Omega^-$ by making a lot of holes in $\Gamma$.
	Let $\eps\in (0,\eps_0]$ be a small parameter. 
	Let 
	$\left\{D\ke\subset\Gamma,\ k\in \N\right\}$ be a family of  
	relatively open in $\Gamma$ connected  sets; below we will   make further assumptions on their locations and sizes. 
	We define the ``sieve'' $\Gamma\e$ by
	\begin{gather*}
		\Gamma\e\ceq\Gamma\setminus\left(\cupl_{k\in\N}D\ke\right).
	\end{gather*}
	The resulting domain $\Omega\e$ is given by
	$$\Omega\e\ceq\Omega^+\cup\Omega^-\cup \left(\cupl_{k\in\N}D\ke\right)=\Omega\setminus \Gamma\e.$$

	Next we introduce the Neumann Laplacian $\mathcal{A}\e$ in $\Omega\e$. Recall that we use the notation $\HS$ for both spaces $\L(\Omega\e)$ and $\L(\Omega)$, which coincide since the set $\Gamma\e$ has  measure zero.
	We  define the sesquilinear form $\a\e$ in $\HS$  by
	\begin{equation}
		\label{ae}
		\a\e[u,v]=
		(\nabla u,\nabla v)_{\L(\Omega\e)}=
		\ds\int_{\Omega\e}
		\suml_{i=1}^n {\partial u\over\partial x^i} {\partial \overline{ v}\over\partial x^i} \d x,\quad
		\dom(\a\e)= \H^1(\Omega\e).
	\end{equation} 
	This form 
	is symmetric, densely defined, closed, and positive. By the first representation theorem \cite[Chapter 6, Theorem 2.1]{Ka66}, there exists the unique self-adjoint and positive operator $\A\e$ associated with $\a\e$,
	i.e. $\dom(\A\e)\subset\dom(\a\e)$ and
	\begin{gather*}
		\forall u\in
		\dom(\A\e),\ \forall  v\in \dom(\a\e):\ (\A\e u,v)_{\HS}= \a\e[u,v].
	\end{gather*}
	Note that $u\e=(\A\e+\Id)f$ is the solution to the problem \eqref{BVPe}.

	Our goal is to describe the behaviour of the resolvent of $\A\e$  as $\eps\to 0$
	under the assumptions  on the distribution and  sizes of the holes $D\ke$ stated below (see \eqref{assump:1}--\eqref{assump:3} and \eqref{assump:main}).

	Let $d\ke$ and $x\ke$ be the radius and the center of the smallest ball $\B( d\ke,x\ke)$ containing $D\ke$; evidently,  $x\ke\in \Gamma$. 
	We make the following assumptions:
	there exist a sequence $\left\{\rho\ke,\ k\in \N\right\}$ 
	of positive numbers
	such that
	\begin{gather}
		\label{assump:1}B\ke\cap B_{l,\eps} =\emptyset\text{ for } k\not=l,\text{ where }B\ke\ceq \B(\rho\ke,x\ke),\\[1mm]
		\label{assump:2}\sup_{k\in\N}\rho\ke\to 0\text{ as }\eps\to0,\\	
		\label{assump:3}\sup_{\eps\in (0,\eps_0]}\sup_{k\in\N}\gamma\ke<\infty,
	\end{gather}	
	where   the numbers $\gamma\ke$ are defined by $$\gamma\ke\ceq 
	\begin{cases}
		d\ke^{n-2}{\rho}\ke^{1-n},&n\ge 3,\\
		|\ln d\ke|^{-1}\rho\ke^{-1},&n=2.
	\end{cases}$$
	It follows easily from \eqref{assump:2} and \eqref{assump:3} that 
	\begin{gather}\label{eps0:1}
		\sup_{\eps\in (0,\eps_0]}\sup_{k\in\N}\rho\ke<1,
		\\
		\label{eps0:1+}
		\sup_{\eps\in (0,\eps_0]}\sup_{k\in\N}\rho\ke< {\dist(\Gamma,\partial\Omega)\over 2},
		\\
		\label{eps0:2}
		\forall\eps\in (0,\eps_0],\ \forall k\in\N:\quad d\ke\le {\rho\ke\over 8}
	\end{gather}
	for  small enough $\eps_0$; in particular, \eqref{assump:1} and \eqref{eps0:2}    imply $\overline{D\ke}\cap \overline{D_{l,\eps}} =\emptyset$ for $k\not=l$. In the following we assume that $\eps_0$ is small enough so that the   conditions \eqref{eps0:1}--\eqref{eps0:2} are satisfied.
	
	To formulate our last assumption, we introduce the capacity-type   quantity 
	$\mathscr{C}(D\ke)$ by
	\begin{gather}\label{capty:inf} 
		\capty(D\ke)=\mathrm{inf}_{U\in \H^1_0(B\ke):\, U\restr_{\overline{D\ke}}=1}\|\nabla U\|^2_{\L(B\ke)}.
	\end{gather}
	Note that $\capty(D\ke)>0$ despite  $D\ke$ has Lebesgue measure zero.
	We   also introduce the set 
	\begin{gather}\label{Ske}
		S\ke\ceq B\ke\cap\Gamma, 
	\end{gather}
	and denote  
	by $\la v\ra_{S\ke}$  the mean value of a function $v:\Gamma\to\C$ in $S\ke$, i.e.
	\begin{equation*}
		\la v\ra_{S\ke}\ceq {1\over\area(S\ke)}\int_{S\ke}v(x)\d x',\quad x=(x',0)\in S\ke.
	\end{equation*}
	Then our last assumption reads as follows: there exists $\gamma \in C^{1}(\Gamma)\cap\mathsf{W}^{1,\infty}(\Gamma)$
	such that 
	\begin{gather}
		\label{assump:main}  
		\left|{1\over 4}\suml_{k\in\N}\capty(D\ke)
		\la g \ra_{S\ke}\la \overline{h} \ra_{S\ke}-(\gamma g,h)_{\L(\Gamma)}\right| \leq
		\kappa\e\| g\|_{\H^{3/2}(\Gamma)}\|h\|_{\H^{1/2}(\Gamma)}\text{\quad with\quad } \kappa\e\underset{\eps\to0}\to 0
	\end{gather}
	for any 
	$g\in \H^{3/2}(\Gamma)$ and
	$h\in \H^{1/2}(\Gamma)$.
	In Section~\ref{sec4} we present two examples on the distribution and the sizes of the holes for which the condition  \eqref{assump:main} is fulfilled.

	Now, we introduce the limiting operator $\mathcal{A}$.
	In the following, for $f\in\HS$, we denote by $f^+$ and $f^-$
	the restrictions of $f$ on $\Omega^+$ and $\Omega^-$, respectively.
	The same notations $f^\pm$ will be used for the traces of $f^\pm$ on $\Gamma$
	provided $f\in\H^1(\Omega\setminus\Gamma)$ (in this case the traces $f^\pm$ are indeed well-defined and belong to $\H^{1/2}(\Gamma)$).
	In  $\HS$ we define the sesquilinear form $\a$ via
	\begin{align} \notag
		\a[f,g]=(\nabla f^+, \nabla g^+)_{\L(\Omega^+)}&+(\nabla f^-, \nabla g^-)_{\L(\Omega^-)}\\
		&+(\gamma [f] ,[g])_{\L(\Gamma)},
		\quad
		\dom(\a)=\H^1(\Omega\setminus\Gamma),\label{a}
	\end{align}
	where $[\cdot] $ stands for the jump of the enclosed function  across $\Gamma$:
	$$
	[h] \ceq (h^+-h^-)\restriction_{\Gamma},\quad h\in\H^1(\Omega\setminus\Gamma).
	$$
	This form is symmetric, densely defined, closed, and non-negative \cite[Proposition 3.1]{BEL14}). We denote by $\A$ the associated self-adjoint operator.
	One has  \cite[Theorem~3.3]{BEL14}: 
	\begin{gather}\label{A:domain} 
		f\in\dom(\A)\ \Leftrightarrow\
		\begin{cases}
			f^\pm\in\H^1(\Omega^\pm),\\[1ex]
			\Delta f^\pm\in \L(\Omega^\pm),\\[1ex]\ds
			{\partial  f^\pm\over\partial   \nu^\pm}\in \L(\partial\Omega^\pm),\quad
			{\partial  f^\pm\over\partial   \nu^\pm}\restriction_\Gamma= 
			\mp\gamma[f] ,\quad
			{\partial  f^\pm\over\partial   \nu^\pm}\restriction_{\partial\Omega^\pm\setminus\Gamma}= 
			0.
		\end{cases}
	\end{gather}
	where  ${\partial   \over\partial   \nu^\pm}$ is the derivative along the outward (with respect to $\Omega^\pm$) pointing normal to $\partial\Omega^\pm$.
	The operator $\A$ acts as follows:
	\begin{gather*} 
		(\A f)^\pm = -\Delta f^\pm.
	\end{gather*}
	In fact, the functions from $\dom(\A)$ possesses even more regularity, but we postpone
	the corresponding statement until the proof of main results, see Lemma~\ref{lemma:H2}.

	\begin{remark}
		It is not hard to show that 
		if \eqref{assump:1}--\eqref{assump:3} hold and, moreover, the left-hand-side 
		of \eqref{assump:main} converge to zero for any fixed $g,h\in C^\infty_0(\Gamma)$, then
		\begin{gather}\label{prop:conv}
			{1\over 4}\ds\suml_{k\in\N}\capty(D\ke)\delta_{x\ke} \rightharpoonup
			\gamma\text{ in }\mathscr{D}'(\Gamma)\text{ as }\eps\to 0,
		\end{gather}
		where $\delta_{x\ke}$ is the Dirac delta function supported at $x\ke$.
		Furthermore, if the domain $\Omega$ is bounded, one can prove that $\A\e$ converges to $\A$ is the strong resolvent sense (i.e., \eqref{L2strong} holds) provided  the assumptions \eqref{assump:1}--\eqref{assump:3} and \eqref{prop:conv} are satisfied. However, to derive operator estimates we require stronger assumption \eqref{assump:main}
		instead of \eqref{prop:conv}.
	\end{remark}

	We are  now in position to formulate the   main results of this work.
	For $k\in\N$ we set 
	\begin{gather}
		\label{etake}
		\eta\ke\ceq  
		\begin{cases}
			{d\ke\over \rho\ke} ,&n\ge 5,
			\\[2mm] {d\ke\over \rho\ke} \left|\ln {d\ke\over \rho\ke} \right|,&n=4,
			\\[2mm] 
			\left({d\ke\over \rho\ke}\right)^{1/2},&n=3,
			\\[2mm] 
			\left|\ln {d\ke\over \rho\ke} \right|^{-1/2},&n=2.
		\end{cases}
	\end{gather}    
	It follows from \eqref{assump:2}, \eqref{assump:3} that $\sup_{k\in\N}\eta\ke\to 0$ as $\eps\to 0$.
	
	\begin{theorem}\label{th1}
		One has
		\begin{gather}\label{th1:est}
			\forall f\in\HS:\quad \|(\A\e+\Id)^{-1}f -(\A+\Id)^{-1}f  \|_{\HS}\leq 
			C\mu\e\|f\|_{\HS},
		\end{gather}
		where $\mu\e$ is defined by 
		\begin{gather}\label{etae}
			\mu\e=	
			\begin{cases}
				\max\left\{\kappa\e;\,  \supl_{k\in\N}\eta\ke \right\},&n\ge 3,\\[3mm]
				\max\left\{\kappa\e;\,  \supl_{k\in\N}\eta\ke; \,  \supl_{k\in\N}\left(\rho\ke^{1/2}\gamma\ke|\ln\rho\ke|\right) \right\},&n=2.
			\end{cases}
		\end{gather}

	\end{theorem}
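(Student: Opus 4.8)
The plan is to derive \eqref{th1:est} from the abstract quasi-unitary-equivalence result of \cite{AP21}, part of the toolbox developed in \cite{P06,P12}. Since $\L(\Omega\e)$ and $\L(\Omega\setminus\Gamma)$ both coincide with $\HS$, the identification operators on the $L^2$-level are the identity, and the ``backward'' identification of form domains can be taken to be the natural one: a function $u\e\in\H^1(\Omega\e)$ restricts to an $\H^1$-function on each half $\Omega^\pm$, so we set $\wt J^1\e u\e\ceq u\e\in\H^1(\Omega\setminus\Gamma)$, which preserves both the $\HS$-norm and the energy. Thus the only non-trivial object is a ``forward'' identification operator $J^1\e:\H^1(\Omega\setminus\Gamma)\to\H^1(\Omega\e)$, whose role is to turn a function that may jump across all of $\Gamma$ into one that is continuous across every hole $D\ke$ (while still being allowed to jump across $\Gamma\e$). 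Once $J^1\e$ is shown to be compatible with the identity in the $\HS$-norm up to an error $\mu\e$ and to intertwine the forms $\a\e$ and $\a$ up to an error $\mu\e$, the abstract theorem produces $\|(\A\e+\Id)^{-1}-(\A+\Id)^{-1}\|_{\HS\to\HS}\le C\mu\e$, which is precisely \eqref{th1:est}; the point is that all the errors can be made of the order $\mu\e$ defined in \eqref{etae}.

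First I would construct $J^1\e$. Set $J^1\e f\ceq f$ outside $\bigcup_{k}B\ke$. Inside each ball $B\ke$ let $U\ke$ be the minimizer in \eqref{capty:inf}, so that $U\ke\in\H^1_0(B\ke)$, $U\ke\restr_{\overline{D\ke}}=1$, and $\|\nabla U\ke\|^2_{\L(B\ke)}=\capty(D\ke)$; since $D\ke\subset\Gamma$, the minimizer is symmetric under reflection across $\Gamma$, whence $\|\nabla U\ke\|^2_{\L(B\ke\cap\Omega^\pm)}=\tfrac12\capty(D\ke)$. With $c\ke\ceq\tfrac12\bigl(\la f^+\ra_{S\ke}+\la f^-\ra_{S\ke}\bigr)$, define on $B\ke\cap\Omega^\pm$
\[
(J^1\e f)^\pm\ceq f^\pm+(c\ke-f^\pm)\,U\ke .
\]
On $\overline{D\ke}$ both one-sided values equal $c\ke$, so $J^1\e f$ is continuous across the hole and therefore belongs to $\H^1(\Omega\e)$; it coincides with $f$ away from the holes.

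The estimates then fall into three groups. On $B\ke$ one has $\nabla(J^1\e f)^\pm=(1-U\ke)\nabla f^\pm+(c\ke-f^\pm)\nabla U\ke$, so expanding $\a\e[J^1\e f,J^1\e g]$ gives $\a[f,g]$ with the domains of integration over $B\ke\cap\Omega^\pm$ reweighted by $(1-U\ke)^2$, plus cross terms, plus the ``capacity'' term $\suml_{k}\int_{B\ke}(c\ke-f)\overline{(c\ke-g)}\,|\nabla U\ke|^2$. Using the reflection symmetry of $U\ke$ together with $c\ke-f^\pm\approx\mp\tfrac12\la[f]\ra_{S\ke}$ near $D\ke$ — where the measure $|\nabla U\ke|^2$ is concentrated — the leading part of this term is $\tfrac14\suml_{k}\capty(D\ke)\,\la[f]\ra_{S\ke}\,\overline{\la[g]\ra_{S\ke}}$, and \eqref{assump:main} applied with $g$, $h$ replaced by the traces $[f]$, $[g]$ identifies this sum with $(\gamma[f],[g])_{\L(\Gamma)}$ up to $\kappa\e\|[f]\|_{\H^{3/2}(\Gamma)}\|[g]\|_{\H^{1/2}(\Gamma)}$, which the trace theorem bounds by $C\kappa\e\|f\|_{\H^2(\Omega\setminus\Gamma)}\|g\|_{\H^1(\Omega\setminus\Gamma)}$. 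The second group collects the remaining local contributions in each $B\ke$: the terms carrying the weight $U\ke^2-2U\ke$ against $|\nabla f|^2$, the cross terms $(1-U\ke)(c\ke-f)\,\nabla f\cdot\nabla U\ke$, the error from replacing $f^\pm$ by $\la f^\pm\ra_{S\ke}$ inside $B\ke$, and the $\HS$-compatibility term $\|J^1\e f-f\|^2_{\L(B\ke)}=\int_{B\ke}|c\ke-f|^2\,U\ke^2$; each is bounded by Cauchy--Schwarz together with the Poincar\'e and Sobolev inequalities on $B\ke$ and on $S\ke$, the elementary estimate $\|U\ke\|_{\L(B\ke)}\le C\rho\ke\,\capty(D\ke)^{1/2}$, a control of the oscillation of $f$ over $B\ke$ in terms of the local Sobolev (H\"older) norms furnished by the extra regularity of $\dom(\A)$ (Lemma~\ref{lemma:H2}), and the comparison of $\capty(D\ke)$ with the capacity of a ball of radius $d\ke$. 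Summing over the pairwise disjoint balls $B\ke$ and substituting $f=(\A+\Id)^{-1}\phi$, $g=(\A+\Id)^{-1}\psi$ (so that $\|f\|_{\H^2(\Omega\setminus\Gamma)}\le C\|\phi\|_{\HS}$ by Lemma~\ref{lemma:H2}) turns the whole second group into $C\,\supl_{k}\eta\ke\,\|\phi\|_{\HS}\|\psi\|_{\HS}$, with $\eta\ke$ as in \eqref{etake}. The third group is specific to $n=2$: there the capacity potential $U\ke$ does not decay, so its deviation from $0$ on $S\ke$ is only logarithmically small and contributes the extra term $\supl_{k}\bigl(\rho\ke^{1/2}\gamma\ke|\ln\rho\ke|\bigr)$ present in \eqref{etae}.

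The step I expect to be the main obstacle is the sharp bookkeeping of the local errors on $B\ke$. Since $D\ke$ is only assumed to be a connected relatively open subset of $\Gamma$, the minimizer $U\ke$ is not explicit, and one must extract from the value $\capty(D\ke)$ alone — together with $d\ke\le\rho\ke/8$ from \eqref{eps0:2} — both the concentration of $|\nabla U\ke|^2$ near $D\ke$ and the smallness of $U\ke$ on $S\ke$, by monotonicity of capacity and comparison with balls. This is what produces the precise exponents in \eqref{etake}, it forces a separate analysis of the borderline dimensions $n=3,4$, and it makes $n=2$ genuinely different (the logarithmic capacity destroys the decay of $U\ke$ on which the estimates for $n\ge3$ rely). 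A secondary difficulty is consistency with the abstract scheme: the compatibility condition is naturally stated with the $\H^1$-norm of $f$, whereas several of the local errors close only with higher-order norms, so the argument has to be run after passing to resolvents and using Lemma~\ref{lemma:H2}. Finally, since $\Omega$ is unbounded and there are infinitely many holes, one must check that all the sums over $k\in\N$ converge uniformly in $\eps$, which follows from the disjointness \eqref{assump:1} of the balls $B\ke$ and the uniform bounds \eqref{assump:2}, \eqref{assump:3}.
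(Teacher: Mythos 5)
There is a genuine gap, and it lies at the heart of the corrector construction.

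You propose to correct $f$ inside each ball $B\ke$ by interpolating with the capacity potential $U\ke$ alone, $(J^1\e f)^\pm = (1-U\ke)f^\pm + c\ke U\ke$, and then to estimate the resulting error terms (the cross terms $(1-U\ke)(c\ke - f)\nabla f\cdot\nabla U\ke$ and the deviation $(f^\pm - \la f^\pm\ra_{S\ke})\,|\nabla U\ke|^2$) by Cauchy--Schwarz plus Poincar\'e/Sobolev. The obstruction is that $U\ke$ is only known to lie in $\H^1_0(B\ke)$ with $\|\nabla U\ke\|^2_{\L(B\ke)}=\capty(D\ke)$; near the edge $\partial D\ke$ of a general relatively open hole, $\nabla U\ke$ has no uniform higher integrability. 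To absorb $|\nabla U\ke|^2$ against the oscillation of $f$ one would need $f^\pm - \la f^\pm\ra_{S\ke}$ bounded in $L^\infty$ near $D\ke$, but $\H^2$-regularity of $\dom(\A)$ gives boundedness only for $n\le 3$; for $n\ge 4$ the H\"older pairing does not close, and for general $D\ke$ not even comparison with balls controls the singularity of $\nabla U\ke$ pointwise. Your appeal to ``monotonicity of capacity and comparison with balls'' controls $\capty(D\ke)$ (an $L^1$ quantity) but not the local $L^q$ behavior of $\nabla U\ke$ that the cross terms require.

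The paper sidesteps exactly this by never letting $\nabla U\ke$ near $D\ke$ appear in any estimate. The corrector $\J\e f$ uses \emph{two} distinct cutoffs: an \emph{explicit radial} function $\phi\ke$ (the spherical interpolation \eqref{phipsi}, whose gradient and $L^q$ norms are computable in closed form) is used on the thin scale $d\ke<|x-x\ke|<\wt d\ke$ to replace $f^\pm$ by the constant $\la f^\pm\ra_{B\ke^\pm}$, and only then is the jump $f\ke$ propagated by $U\ke\psi\ke$ with $\psi\ke$ supported away from $D\ke$. The crucial piece $I^{2,\pm}\e$ is then handled by integrating by parts \emph{twice}: harmonicity of $U\ke$ together with $\supp(\nabla\psi\ke)\subset\{\rho\ke/4\le|x-x\ke|\le\rho\ke/2\}$ means $\Delta(U\ke\psi\ke)$ is supported in an annulus where the explicit decay estimates \eqref{H:estimates:1}--\eqref{H:estimates:2} hold, and the only boundary term that survives is $\int_{D\ke}\partial_\nu U\ke\,\overline{u}$, which yields $\capty(D\ke)$ \emph{exactly} via \eqref{cap:D} once one passes to the mean $\la u\ra_{B\ke^\pm}$. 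That exact cancellation (and the cancellation $Q^{1,+}\e+Q^{1,-}\e=0$ from symmetry) is what lets the remaining errors be of order $\eta\ke$. With a single-cutoff corrector as in your proposal, neither the exact capacity identity nor the localization away from $\partial D\ke$ is available, and the estimates you sketch do not produce the exponents in \eqref{etake}. A secondary issue is that the abstract condition \eqref{propA1:3} must hold against a \emph{general} test function $u\in\H^1(\Omega\e)$, not against $u=J^1\e g$; expanding $\a\e[J^1\e f, J^1\e g]$ as you describe is not what Proposition~\ref{propA1} asks for.
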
	
	
	An immediate consequence of Theorem~\ref{th1} is a convergence of spectra. Recall that
	for closed sets $X,Y\subset\R$  the \emph{Hausdorff distance} $d_H (X,Y)$ is given by
	\begin{gather*}
		d_H (X,Y)\ceq\max\left\{\sup_{x\in X} \inf_{y\in Y}|x-y|;\,\sup_{y\in Y} \inf_{x\in X}|y-x|\right\}.
	\end{gather*}
	
	\begin{corollary}\label{th3}
		One has:
		\begin{itemize}
			\item[(A)] For any $\lambda\in \sigma(\A)$ there exists a family $(\lambda\e)_{\eps>0}$ with 
			$\lambda\e\in \sigma(\A\e)$ such that $\lambda\e\to \lambda$ as $\eps\to 0$;
			
			\item[(B)] For any $\lambda\in\R\setminus \sigma(\A)$ there exist $\delta>0$ 
			such that $\sigma(\A\e)\cap (\lambda-\delta,\lambda+\delta)=\emptyset$ 
			for sufficiently small $\eps$.
			
		\end{itemize}
		Moreover, one has the estimate
		\begin{gather}\label{th3:est}
			d_{H}(\sigma((\A\e+\Id)^{-1}),\sigma((\A+\Id)^{-1})\leq C\mu\e.
		\end{gather}
	\end{corollary}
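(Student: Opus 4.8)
The plan is to deduce Corollary~\ref{th3} from Theorem~\ref{th1} by a standard functional-analytic argument relating norm resolvent convergence to Hausdorff convergence of spectra. First I would observe that the estimate \eqref{th1:est} says precisely that the bounded self-adjoint operators $R\e\ceq(\A\e+\Id)^{-1}$ converge in operator norm to $R\ceq(\A+\Id)^{-1}$, with $\|R\e-R\|_{\HS\to\HS}\le C\mu\e$. Since $\A\e$ and $\A$ are non-negative self-adjoint operators, $R\e$ and $R$ are non-negative, bounded by $1$, and their spectra satisfy $\sigma(R\e)\setminus\{0\}=\{(\lambda+1)^{-1}:\lambda\in\sigma(\A\e)\}$ and similarly for $R$. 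The point $0$ may or may not belong to these spectra (it does iff the operator is unbounded), but this causes no trouble because $0$ is a common accumulation point candidate and the map $\lambda\mapsto(\lambda+1)^{-1}$ is a homeomorphism from $[0,\infty)\cup\{\infty\}$ onto $(0,1]$.

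Next I would invoke the elementary but crucial fact that for bounded self-adjoint operators $T_1,T_2$ one has $d_H(\sigma(T_1),\sigma(T_2))\le\|T_1-T_2\|$. The quick proof: if $\mu\in\sigma(T_1)$ and $\dist(\mu,\sigma(T_2))=\delta$, then $(T_2-\mu)^{-1}$ exists with $\|(T_2-\mu)^{-1}\|=\delta^{-1}$, and a Neumann-series perturbation argument shows $(T_1-\mu)^{-1}$ exists whenever $\|T_1-T_2\|<\delta$, contradicting $\mu\in\sigma(T_1)$; hence $\delta\le\|T_1-T_2\|$, and by symmetry the same bound controls the sup over the other spectrum. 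Applying this with $T_1=R\e$, $T_2=R$ and \eqref{th1:est} yields immediately
\begin{gather*}
	d_H(\sigma(R\e),\sigma(R))\le\|R\e-R\|_{\HS\to\HS}\le C\mu\e,
\end{gather*}
which is exactly \eqref{th3:est}.

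It remains to translate this into statements (A) and (B) about $\sigma(\A\e)$ and $\sigma(\A)$. For (A): given $\lambda\in\sigma(\A)$, the point $(\lambda+1)^{-1}$ lies in $\sigma(R)$, so by the Hausdorff bound there exists $\nu\e\in\sigma(R\e)$ with $|\nu\e-(\lambda+1)^{-1}|\le C\mu\e\to 0$; for small $\eps$ one has $\nu\e>0$, hence $\nu\e=(\lambda\e+1)^{-1}$ for a unique $\lambda\e\in\sigma(\A\e)$, and continuity of the inverse transformation gives $\lambda\e\to\lambda$. For (B): if $\lambda\in\R\setminus\sigma(\A)$, then $(\lambda+1)^{-1}\notin\sigma(R)$ (using that $R$ is injective so $0\notin$ the relevant part, or noting $\lambda\ge 0$ is automatic since $\A\ge 0$ makes any $\lambda<0$ trivially in the resolvent set with a uniform gap), so there is a closed interval $I$ around $(\lambda+1)^{-1}$ disjoint from $\sigma(R)$; by the Hausdorff estimate $\sigma(R\e)\cap I=\emptyset$ for $\eps$ small enough, and pulling back through the homeomorphism produces the desired $\delta$-neighbourhood of $\lambda$ free of $\sigma(\A\e)$.

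I do not expect genuine obstacles here, as the corollary is a soft consequence of Theorem~\ref{th1}; the only point requiring a little care is the bookkeeping around the spectral point $0$ of the resolvents (equivalently, the point at infinity for $\A\e$ and $\A$), which is handled by working with the resolvents throughout and only at the very end transferring to the operators on compact subintervals of $[0,\infty)$ via the homeomorphism $\lambda\mapsto(\lambda+1)^{-1}$.
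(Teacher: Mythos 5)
Your argument is correct and follows essentially the same route as the paper: Theorem~\ref{th1} gives $\|R_\eps-R\|\le C\mu_\eps$ for the resolvents, the standard spectral perturbation inequality $d_H(\sigma(T_1),\sigma(T_2))\le\|T_1-T_2\|$ for bounded self-adjoint (or normal) operators yields \eqref{th3:est}, and statements (A), (B) follow by transporting the spectra through the homeomorphism $\lambda\mapsto(\lambda+1)^{-1}$. The only cosmetic difference is that you supply short self-contained proofs of the two auxiliary facts, whereas the paper simply cites Lemmas~A.1 and~A.2 of \cite{HN99}.
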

	
	\begin{proof}
		For two normal bounded operators $R_1$ and $R_2$  in the Hilbert space $\HS$.
		one has the following inequality (see, e.g., \cite[Lemma~A.1]{HN99}):
		$$
		d_H(\sigma(R_1),\sigma(R_2))\leq \|R_1-R_2\|_{\HS}.
		$$
		Applying it for $R_1\ceq (\A\e+\Id)^{-1}$ and  $R_2\ceq (\A+\Id)^{-1}$ and
		taking into account \eqref{th1:est}, we arrive at the estimate \eqref{th3:est}.
		By virtue of 
		\cite[Lemma~A.2]{HN99} the statements $(A)$ and $(B)$ follow immediately from the fact 
		$d_{H}(\sigma((\A\e+\Id)^{-1}),\sigma((\A+\Id)^{-1})\to 0$ as $\eps\to 0$.
	\end{proof}
	
	The second main theorem establishes the closeness of  resolvents 
	$(\A\e+\Id)^{-1}$ and $(\A+\Id)^{-1}$ in the $(\L\to \H^1)$ operator norm.
	Here, to get a reasonable result, one needs to employ a special
	corrector. We denote
	\begin{gather}\label{Bke}
		B\ke^\pm\ceq B\ke\cap\Omega^\pm.
	\end{gather}
	Let $U\ke\in H^1_0(B\ke)$ be the solution to the problem
	\begin{gather}\label{BVP:cap}
		\begin{cases}
			\Delta U=0,&x\in B\ke\setminus\overline{D\ke},\\
			U=1,&x\in\partial D\ke =\overline{D\ke},\\
			U=0,&x\in\partial B\ke.
		\end{cases}    
	\end{gather}  
	It is well-known that
	\begin{gather}\label{capty:eq}
		\capty(D\ke)=
		\|\nabla U\ke\|^2_{\L(B\ke)}.
	\end{gather}
	We introduce the operator $\mathscr{K}\e:\HS\to \H^1(\Omega\setminus\Gamma)$ via
	\begin{gather}\label{Ke}
		(\mathscr{K}\e f)(x) =
		\begin{cases}
			-{1\over 2} \la [g]\ra_{S\ke}U\ke^+(x),&x\in B\ke^+ ,\\
			{1\over 2} \la [g]\ra_{S\ke}U\ke^-(x),&x\in B\ke^- ,\\
			0,&x\in\ds\Omega\setminus\left(\cup_{k\in\N} B\ke\right),
		\end{cases}\quad\text{where } 
		g= (\A+\Id)^{-1}f. 
	\end{gather}
	Here  $U\ke^\pm\ceq U\ke\restriction_{B\ke^\pm}$; recall that
	$[g]=(g^+-g^-)\restriction_\Gamma$, and $\la [g]\ra_{S\ke}$ stands for the mean value of $[g]$ in $S\ke$.

	\begin{theorem}\label{th2}
		One has 
		\begin{gather}\label{th2:est}
			\forall f\in\HS:\quad \|(\A\e+\Id)^{-1}f - (\A+\Id)^{-1}f - \mathscr{K}\e f \|_{\H^1(\Omega\setminus\Gamma)} \leq C\mu\e\|f\|_{\HS}.
		\end{gather}
		The correcting term $\mathscr{K}\e f$ obeys the following   properties: 
		\begin{gather}\label{K:prop1}
			\|(\mathscr{K}\e f)^\pm\|_{\L(\Omega^\pm)}\leq  C\sup_{k\in\N}(\eta\ke\rho\ke^{1/2})\|f\|_{\HS},\\\label{K:prop2}
			\|\nabla (\mathscr{K}\e f)^\pm\|^2_{\L(\Omega^\pm)}=
			{1\over 2}\|\ga^{1/2}[g]\|^2_{\L(\Gamma)}+\mathcal{O}(\kappa\e)\|f\|^2_{\HS},\text{ where }
			g=(\A+\Id)^{-1}f.
		\end{gather}
	\end{theorem}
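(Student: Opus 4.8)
The plan is to use the abstract framework of \cite{AP21}. The key object is an \emph{identification operator} $J\e\colon \H^1(\Omega\setminus\Gamma)\to \H^1(\Omega\e)$ which, roughly speaking, takes the homogenized solution $g=(\A+\Id)^{-1}f$ and produces a good approximation of $u\e=(\A\e+\Id)^{-1}f$. The natural candidate is $J\e g = g + \mathscr{K}\e g$ near $\Gamma$ (suitably modified so that the result really lies in $\H^1(\Omega\e)$, i.e.\ has matching traces across the holes $D\ke$), and $J\e g = g$ away from the sieve. The heart of the matter is the estimate
\begin{gather*}
\bigl|\,\a\e[J\e g, v] - \a[g, w]\,\bigr| \leq C\mu\e\,\|f\|_{\HS}\,\|v\|_{\H^1(\Omega\e)}
\end{gather*}
for an appropriate test-function correspondence $v\leftrightarrow w$; once this and the companion $\L$-norm closeness $\|J\e g - g\|_{\HS}\le C\mu\e\|f\|_{\HS}$ are in hand, the abstract lemma converts them into \eqref{th2:est}. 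So the first step I would carry out is to write down $J\e$ carefully on each neighbourhood $B\ke$, using the capacitary potentials $U\ke^\pm$ from \eqref{BVP:cap}, and check that $\mathscr{K}\e f$ as defined in \eqref{Ke} differs from $(J\e g - g)$ only by lower-order terms.

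The local analysis on a single $B\ke$ is where the real work lies. On $B\ke$ one replaces $g$ by its mean value $\la g^\pm\ra_{S\ke}$ on the two faces of $S\ke$ — the error in doing so is controlled by a Poincaré-type inequality on the half-ball $B\ke^\pm$ and produces the factor $\eta\ke$ through the scaling of $d\ke/\rho\ke$ against the gradient of $g$; summing over $k$ and using the trace/extension estimates on $\Omega^\pm$ brings in the $\H^{3/2}$ and $\H^{1/2}$ norms of $g$, hence $\|f\|_{\HS}$ by the regularity of $\dom(\A)$ (Lemma~\ref{lemma:H2}). Then the quadratic term $\|\nabla (\mathscr{K}\e f)^\pm\|^2_{\L(B\ke^\pm)}$ equals exactly $\tfrac14 |\la[g]\ra_{S\ke}|^2\,\|\nabla U\ke^\pm\|^2_{\L(B\ke^\pm)} = \tfrac18 |\la[g]\ra_{S\ke}|^2\,\capty(D\ke)$ by \eqref{capty:eq} and the symmetry $\|\nabla U\ke^+\|^2 = \|\nabla U\ke^-\|^2 = \tfrac12\capty(D\ke)$; summing over $k$ and invoking \eqref{assump:main} (with $g=h$) yields precisely \eqref{K:prop2}, with the factor $\tfrac12$ and the $\mathcal{O}(\kappa\e)$ remainder. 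The cross term $\a\e[\mathscr{K}\e g, v]$ is handled by integrating by parts: since $U\ke$ is harmonic in $B\ke\setminus\overline{D\ke}$, only the boundary contributions on $\partial D\ke$ and on $S\ke$ survive, and the $S\ke$-contribution is engineered to cancel, up to $\kappa\e$, against the $\delta'$-term $(\gamma[g],[w])_{\L(\Gamma)}$ in $\a[g,w]$ — this is the computation that forces the capacity normalisation in \eqref{assump:main}.

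For the $\L$-estimate \eqref{K:prop1}, I would simply estimate $\|U\ke^\pm\|_{\L(B\ke^\pm)}$ by its explicit size: $U\ke$ is bounded by $1$ and essentially supported where the capacitary potential is non-negligible, which has volume of order $\rho\ke^{\,n}$ up to logarithmic corrections, so $\|U\ke^\pm\|_{\L(B\ke^\pm)} \le C\rho\ke^{1/2}\eta\ke\,\rho\ke^{(n-1)/2}$ in a way that, after multiplying by $|\la[g]\ra_{S\ke}|$ and summing (Cauchy–Schwarz against the trace norm of $[g]$), gives \eqref{K:prop1}; the point is that $\rho\ke^{1/2}\eta\ke$ is genuinely smaller than $\mu\e$, so $\mathscr{K}\e f = o(\mu\e)$ in $\L$, as advertised in the introduction. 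The main obstacle I anticipate is the dimension-dependent bookkeeping in the local Poincaré and capacity estimates — the four cases $n\ge 5$, $n=4$, $n=3$, $n=2$ in \eqref{etake} each require a slightly different handling of the logarithmic factors and of the interplay between $d\ke$, $\rho\ke$ and $\capty(D\ke)$ — together with verifying that the modified $J\e g$ has the right traces on the holes without spoiling the $\H^1$-bound; everything else is a now-standard assembly through the abstract scheme of \cite{AP21}.
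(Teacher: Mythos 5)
Your proposal follows essentially the same route as the paper: you invoke the abstract scheme of \cite{AP21} with an identification operator built from the capacitary potentials $U\ke^\pm$ and cut-offs, note that $\mathscr{K}\e f$ differs from $\J\e g - g$ only by controllable remainder terms (the paper's $\mathscr{V}\e g + \mathscr{W}\e g$), obtain \eqref{K:prop2} from the exact identity $\|\nabla(\mathscr{K}\e f)^\pm\|^2 = \tfrac18\sum_k\capty(D\ke)|\la[g]\ra_{S\ke}|^2$ together with the symmetry $\|\nabla U\ke^\pm\|^2 = \tfrac12\capty(D\ke)$ and assumption \eqref{assump:main}, and derive \eqref{K:prop1} from the $\L$-scaling of $U\ke$. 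The only cosmetic imprecision worth flagging is in your description of the integration by parts: by the Neumann property $\partial_{\nu^\pm}U\ke^\pm = 0$ on $S\ke\setminus\overline{D\ke}$ the surviving flux lives on $D\ke$ rather than on all of $S\ke$, and the cut-off $\psi\ke$ introduces a genuine bulk term $\Delta(U\ke\psi\ke)\neq 0$ that must be estimated separately (the paper's $Q^{2,\pm}\e$) — but these are bookkeeping details rather than a gap in the strategy.
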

	
	\begin{remark}\label{rem:Gamma'} 
	Besides $\Omega$   satisfying \eqref{h} one can also address the case when $\Gamma$ intersects $\Omega$. However, here we faced technical difficulties that so far we cannot overcome -- 
	see Remark~\ref{rem:intersection} for    details. The only case, which we still are able to handle
	is when the holes are located on a ``safe'' distance from $\partial\Omega$. Namely,  let $\Omega\subset\R^n$ be a Lipschitz domain in $\R^n$, while the hyperplane $\Gamma$  
	intersects $\Omega$, and there is a closed subset
	$\Gamma'$ of $\Gamma\cap\Omega$ such that 
	$\dist(\Gamma',\partial\Omega)>0$. We assume that $\{D\ke\subset\Gamma',\ k\in\mathbb{M}\e\subseteq\N\}$ are relatively open connected sets satisfying the assumptions \eqref{assump:1}--\eqref{assump:3}  (with $k\in\N$ being replaced by $k\in\mathbb{M}\e$), and there is a function $\gamma \in C^{1}(\Gamma)\cap\mathsf{W}^{1,\infty}(\Gamma)$ with $\supp(\gamma)\subset\Gamma'$ such that the assumption
	\eqref{assump:main} is fulfilled. 
	In this case Theorems~\ref{th1} and \ref{th2} remain 
	valid with the homogenized operator $\A$ being defined in a similar way. The assumption 
	$\supp(\gamma)\subset\Gamma'$ means that we get non-trivial interface conditions only 
	on $\Gamma'$, while the set $(\Gamma\cap\Omega)\setminus\Gamma'$ is 
	impenetrable -- from both sizes we have the Neumann conditions. 
	The proof is similar to the case \eqref{h}.
	\end{remark}

	\begin{remark}
		Instead of Laplacians one can also consider Schr\"odinger operators. Namely, 
		let $V\in\LL^\infty(\Omega)$ be real, then the estimates
		\eqref{th1:est}, \eqref{th2:est}   hold true with
		$\A\e+\Id$ and $\A+\Id$ being replaced by
		$\A\e+V-\Lambda\Id$ and $\A+V-\Lambda\Id$, respectively;   $\Lambda\in\R$ is  small enough
		in order to be in the resolvent sets of $\A\e+V$ and $\A+V$
		(for example, one can choose
		$\Lambda=-\|V\|_{\LL^\infty(\Omega)}-1$).
		\end{remark}

	\begin{remark}
	The choice of the boundary conditions on $\partial\Omega$
	is inessential: the above results remain valid if we imposes Dirichlet, Robin, mixed or any other \emph{$\eps$-independent}   conditions on $\partial\Omega$ (of course, the homogenized operator then must be changed accordingly). 	
    \end{remark}

	The remaining part of the work is organized as follows.
	In the next section we present the proof of Theorems~\ref{th1} and \ref{th2}.
	In Subsection~\ref{subsec:3:1} we collect several useful estimates which will be widely used further. In Subsection~\ref{subsec:3:2} 
	we recall the abstract result from \cite{AP21} for studying convergence of operators in varying Hilbert spaces. In Subsection~\ref{subsec:3:3} we verify the conditions of this abstract result in our concrete setting. The proofs of  are completed in
	Subsection~\ref{subsec:3:4}.  
	Finally, in Section~\ref{sec4} we present two examples  for which the condition    \eqref{assump:main} is fulfilled.

	\section{Proof of main results}\label{sec3}

	\subsection{Auxiliary estimates}\label{subsec:3:1}

	In the following, by $\la v\ra _{B}$ we denote the mean value of the function  $v$ in the   
	measurable bounded set $B$ with $\vol(B)\not=0$ (hereinafter, $\vol(B)$ stands for the volume of $B$), i.e.,
	\begin{equation*}
		\la v\ra_{B}\ceq {1\over \vol(B)}\int_{B}v(x)\d x.
	\end{equation*}
	The same notation will be use for the mean value of a function defined on a subset $S$ of the hyperplane $\Gamma$ with $\area(S)\not=0$: 
	\begin{equation*}
		\la v\ra_{S}\ceq {1\over\area(S)}\int_{S}v(x)\d x',\ x=(x',0).
	\end{equation*}
	
	Recall that $S\ke^\pm$ and $B\ke^\pm$ are defined by \eqref{Ske} and \eqref{Bke}, respectively.

	\begin{lemma}
		\label{lemma:poincare:friedrichs:trace}
		One has 
		\begin{align} \label{Poincare}
			\forall v\in \H^1(B\ke^\pm):&&
			\|v-\langle v \rangle_{B\ke^\pm}\|^2_{\L(B\ke^\pm)} \leq
			C\rho\ke^2 \|\nabla v\|^2_{\L(B\ke^\pm)},\\ \label{Friedrichs} 
			\forall v\in \H^1_0(B\ke):&& 
			\|v \|^2_{\L(B\ke )} \leq
			C\rho\ke^2 \|\nabla v\|^2_{\L(B\ke)},\\ \label{trace}
			\forall v\in \H^1(B\ke^\pm):&& 
			\|v \|^2_{\L(B\ke^\pm)} \leq
			C\left(\rho\ke\|v \|_{\L(S\ke)}^2+\rho\ke^2\|\nabla v\|^2_{\L(B\ke^\pm)}\right).
			\\ \label{mean:mean}
			\forall v\in \H^1(B\ke^\pm):&& 
			|\la v\ra_{S\ke }-\la v\ra_{B\ke^\pm} |^2  \leq
			C \rho\ke ^{2-n}\|\nabla v \|^2_{\L(B\ke^\pm)}.
		\end{align}
	\end{lemma}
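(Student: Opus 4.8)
The plan is to establish the four estimates by reducing each to a fixed, $\eps$-independent reference configuration via scaling, and then invoking standard Sobolev-type inequalities on that reference domain. Since $B\ke=\B(\rho\ke,x\ke)$ and $x\ke\in\Gamma$, the pair $(B\ke,B\ke^\pm)$ is, after the affine change of variables $y=(x-x\ke)/\rho\ke$, exactly the unit ball $\B(1,0)$ together with its upper/lower half-ball $\B^\pm(1,0)=\B(1,0)\cap\{\pm y^n>0\}$, independently of $k$ and $\eps$; likewise $S\ke$ becomes the unit disk $\B(1,0)\cap\Gamma$ in $\R^{n-1}$. I would fix notation $B\ceq\B(1,0)$, $B^\pm$, $S\ceq B\cap\Gamma$ once and for all, prove the scaled versions of \eqref{Poincare}--\eqref{mean:mean} on this reference geometry, and then track the powers of $\rho\ke$ produced by the change of variables. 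Concretely, under $x\mapsto x\ke+\rho\ke y$ one has $\d x=\rho\ke^n\d y$, $\d x'=\rho\ke^{n-1}\d y'$ (on $\Gamma$), and $\nabla_x=\rho\ke^{-1}\nabla_y$; so $\|v\|^2_{\L(B\ke^\pm)}=\rho\ke^n\|\tilde v\|^2_{\L(B^\pm)}$, $\|\nabla v\|^2_{\L(B\ke^\pm)}=\rho\ke^{n-2}\|\nabla\tilde v\|^2_{\L(B^\pm)}$, and $\|v\|^2_{\L(S\ke)}=\rho\ke^{n-1}\|\tilde v\|^2_{\L(S)}$, where $\tilde v(y)=v(x\ke+\rho\ke y)$. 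These bookkeeping relations explain, respectively, the factors $\rho\ke^2$ in \eqref{Poincare} and \eqref{Friedrichs}, the mixed factors $\rho\ke$ and $\rho\ke^2$ in \eqref{trace}, and the factor $\rho\ke^{2-n}$ in \eqref{mean:mean} (note that mean values are scale-invariant, so only the ratio of the norms contributes there).

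For the reference estimates themselves: \eqref{Poincare} is the classical Poincaré inequality on the connected Lipschitz domain $B^\pm$ with the mean taken over $B^\pm$; \eqref{Friedrichs} is the Friedrichs (Poincaré–Wirtinger-free) inequality on $B$ for $\H^1_0$ functions, which holds since $B$ is bounded; \eqref{trace} follows from the continuity of the trace operator $\H^1(B^\pm)\to\L(\partial B^\pm)$ combined with a standard argument: writing $v=\la v\ra_{S}+(v-\la v\ra_{S})$, controlling the oscillation by $\|\nabla v\|_{\L(B^\pm)}$ via Poincaré, and controlling $|\la v\ra_{S}|$ by $\|v\|_{\L(S)}$ — actually it is cleaner to argue directly that $\|v\|^2_{\L(B^\pm)}\le C(\|v\|^2_{\L(S)}+\|\nabla v\|^2_{\L(B^\pm)})$ on the fixed domain by a compactness/contradiction argument, noting that the only functions annihilating the right-hand side would be constants vanishing on $S$, i.e. zero. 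Finally, for \eqref{mean:mean} I would note that on the reference domain $|\la\tilde v\ra_S-\la\tilde v\ra_{B^\pm}|$ is a bounded linear functional of $\nabla\tilde v$: it annihilates constants, hence by Poincaré on $B^\pm$ it is dominated by $\|\nabla\tilde v\|_{\L(B^\pm)}$; rescaling then yields $|\la v\ra_{S\ke}-\la v\ra_{B\ke^\pm}|\le C\rho\ke^{-1}\cdot\rho\ke^{(2-n)/2}\cdot\rho\ke^{?}\|\nabla v\|$, and carefully collecting the exponents (the $\rho\ke^{(n-2)/2}$ from $\|\nabla\tilde v\|_{\L(B^\pm)}=\rho\ke^{(2-n)/2}\|\nabla v\|_{\L(B\ke^\pm)}$) gives precisely the stated $\rho\ke^{2-n}$ after squaring.

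The only genuine point requiring care — and the place I would slow down — is \eqref{mean:mean}: one must make sure the functional $v\mapsto\la v\ra_{S}-\la v\ra_{B^\pm}$ really kills all constants (it does, since both means reproduce constants) and is bounded on $\H^1(B^\pm)/\text{constants}$ with a norm independent of $k,\eps$ (it is, precisely because the reference geometry is fixed once the scaling is performed), and then to chase the exponents of $\rho\ke$ correctly, since here — unlike \eqref{Poincare}--\eqref{trace} — the power of $\rho\ke$ is negative and depends on the dimension $n$. Everything else is a routine combination of the change of variables above with textbook inequalities on a fixed Lipschitz domain, and the $\eps$-independence of all constants is automatic because, after scaling, there is literally one reference domain $B$ doing all the work. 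I would therefore present the scaling reduction first, state the four reference inequalities as a single display, justify each by its standard name (Poincaré, Friedrichs, trace theorem, Poincaré again for the functional), and close by substituting the scaling identities.
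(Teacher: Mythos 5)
Your proposal is correct and follows essentially the same route as the paper: rescale to the fixed reference configuration $B=\B(1,0)$, $B^\pm=B\cap\{\pm x^n>0\}$, $S=B\cap\Gamma$, invoke the classical Poincar\'e, Friedrichs, and trace inequalities there, and track the powers of $\rho\ke$ produced by $y=\rho\ke^{-1}(x-x\ke)$. The only difference is presentational: for \eqref{mean:mean} the paper explicitly chains Cauchy--Schwarz with the scaled trace estimate applied to $v-\la v\ra_{B\ke^\pm}$, whereas you argue abstractly that $\tilde v\mapsto\la\tilde v\ra_{S}-\la\tilde v\ra_{B^\pm}$ is a bounded functional on $\H^1(B^\pm)$ annihilating constants; the substance is identical.
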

	
	\begin{proof}We denote
		$B\ceq \B(1,0)$, $B^\pm\ceq B\cap \left\{x\in\R^n:\ \pm x^n>0\right\}$, $S\ceq B\cap\Gamma$.
		We have the following standard Poincar\'e-type inequalities:
		\begin{align}\label{Poincare:1}
			\forall v\in \H^1(B^\pm):&&
			\|v-\langle v \rangle_{B^\pm}\|^2_{\L(B^\pm)} \leq
			\Lambda_N^{-1}\|\nabla v\|^2_{\L(B^\pm)},\\
			\label{Friedrichs:1}
			\forall v\in \H^1_0(B):&&
			\|v\|_{\L(B )}^2 \leq
			\Lambda_D^{-1 }\|\nabla v\|_{\L(B)}^2,\\\label{trace:1}
			\forall v\in \H^1(B^\pm):&&	\|v\|_{\L(B^\pm)}^2\leq 
			\Lambda_R^{-1}
			\left(\|v\|^2_{\L( S)}+\|\nabla v\|^2_{\L(B^\pm)}\right),
		\end{align}
		where $\Lambda_N>0$  
		is the   smallest non-zero eigenvalue of 
		the Neumann Laplacian on $B^\pm$, 
		$\Lambda_D>0$ 
		is the   smallest  eigenvalue of 
		the  Dirichlet Laplacian on $B $, and
		$\Lambda_R>0$
		is  the   smallest  eigenvalue of 
		the   Laplacian on $B^\pm$ subject to the Robin boundary conditions 
		${\partial  v\over\partial   \nu^\pm} +v=0$ ($\nu^\pm$ is the  unit normal  pointed outward of $B^\pm$) on $S$ and the Neumann boundary conditions on
		$\partial B^\pm\setminus S$.
		Then, via the coordinate transformation
		\begin{gather}\label{variables}
			y=  \rho\ke^{-1}(x-x\ke),
		\end{gather}
		we reduce  \eqref{Poincare:1}--\eqref{trace:1} to \eqref{Poincare}--\eqref{trace}.
		Furthermore, one has also the trace estimate
		\begin{gather*}
			\forall v\in \H^1(B^\pm):\quad	\|v\|_{\L(S)}^2\leq 
			C\| v\|^2_{\H^1(B^\pm)},
		\end{gather*}
		which, after the coordinate transformation \eqref{variables}, reduces to
		\begin{align} \label{trace:3}
			\forall v\in \H^1(B\ke^\pm):\quad
			\|v \|^2_{\L(S\ke)} \leq
			C\left(\rho\ke^{-1}\|v \|_{\L(B\ke^\pm)}^2+\rho\ke\|\nabla v\|^2_{\L(B\ke^\pm)}\right).
		\end{align}
		Using the Cauchy-Schwarz inequality,  \eqref{Poincare} and  \eqref{trace:3}, we get
		the last estimate \eqref{mean:mean}:
		\begin{align*}
			\left|\la v\ra_{S\ke}-\la v\ra_{B\ke^\pm} \right|^2 &=
			\left|\la v-\la v\ra_{B\ke^\pm}\ra_{S\ke } \right|^2\leq
			{1\over \area(S\ke)}\| v-\la v\ra_{B\ke^\pm}\|^2_{\L(S\ke)}\\
			&\leq 
			C\rho\ke^{1-n}\left(\rho\ke^{-1}\|v- \la v\ra_{B\ke^\pm}\|^2_{\L(B^\pm\ke)}+\rho\ke\|\nabla v\|^2_{\L(B^\pm\ke)} \right)\leq
			C_1\rho\ke^{2-n}\|\nabla v\|^2_{\L(B\ke)}.
		\end{align*}
		The lemma is proven.
	\end{proof}

	We denote
	\begin{gather} \label{wtdke}
		\wt d\ke\ceq
		\begin{cases}
			2 d\ke ,&n\ge 3 , 
			\\
			(\rho\ke d\ke)^{1/2} ,&n=2, 
		\end{cases}\\\label{Fke}
		\wt D\ke^\pm\ceq \B(\wt d\ke,x\ke)\cap\left\{x\in\R^n:\ \pm x^n>0\right\}.
	\end{gather}
	It follows \eqref{eps0:2} that
	\begin{gather}\label{wtd:cond}
		\forall\eps\in (0,\eps_0],\ \forall k\in\N:\quad \wt d\ke<{\rho\ke\over 2}.
	\end{gather}
	whence, in particular, $\overline{\wt D\ke^\pm}\subset B\ke^\pm\cup S\ke$.

	\begin{lemma}\label{lemma:Fest}
		One has  
		\begin{gather}\label{Fest}
			\forall v\in \H^1(B^\pm\e):\
			\|v\|^2_{\L(\wt D\ke^\pm)}\leq
			\begin{cases}
				C\left(\left({d\ke \over \rho\ke}\right)^n\|v\|^2_{\L(B\ke^\pm)}+d\ke^2\|\nabla v\|^2_{\L(B\ke^\pm)}\right),&n\ge 3,\\[3mm]
				C\left({d\ke \over \rho\ke}\|v\|^2_{\L(B\ke^\pm)}+d\ke \rho\ke|\ln (d\ke\rho\ke)|\cdot\|\nabla v\|^2_{\L(B\ke^\pm)}\right),&n=2.
			\end{cases}
		\end{gather}
	\end{lemma}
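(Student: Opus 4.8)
The plan is to reduce \eqref{Fest} to a scale-invariant inequality on the unit half-ball and then prove the latter by splitting a function into its mean value plus a fluctuation and estimating the fluctuation through the Sobolev embedding and Hölder's inequality. Put $B\ceq\B(1,0)$, $B^\pm\ceq B\cap\{x\in\R^n:\pm x^n>0\}$, and $B^\pm_t\ceq\B(t,0)\cap\{x\in\R^n:\pm x^n>0\}$ for $t>0$. Under the dilation $y=\rho\ke^{-1}(x-x\ke)$, the half-ball $B\ke^\pm$ is carried onto $B^\pm$ and $\wt D\ke^\pm$ onto $B^\pm_t$ with $t\ceq\wt d\ke/\rho\ke$, which satisfies $t<1/2$ by \eqref{wtd:cond}. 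Setting $w(y)\ceq v(x\ke+\rho\ke y)$ and tracking the Jacobians, one gets $\|v\|^2_{\L(\wt D\ke^\pm)}=\rho\ke^{n}\|w\|^2_{\L(B^\pm_t)}$, $\|v\|^2_{\L(B\ke^\pm)}=\rho\ke^{n}\|w\|^2_{\L(B^\pm)}$, and $\|\nabla v\|^2_{\L(B\ke^\pm)}=\rho\ke^{n-2}\|\nabla w\|^2_{\L(B^\pm)}$. Hence it suffices to prove that for every $w\in\H^1(B^\pm)$ and every radius $t\in(0,1/2)$,
\begin{gather*}
\|w\|^2_{\L(B^\pm_t)}\le C\,t^{n}\|w\|^2_{\L(B^\pm)}+C\,t^{2}\|\nabla w\|^2_{\L(B^\pm)}\quad\text{if }n\ge3,\\
\|w\|^2_{\L(B^\pm_t)}\le C\,t^{2}\|w\|^2_{\L(B^\pm)}+C\,t^{2}|\ln t|\,\|\nabla w\|^2_{\L(B^\pm)}\quad\text{if }n=2,
\end{gather*}
and then to substitute $t=2d\ke/\rho\ke$ (for $n\ge3$), respectively $t=(d\ke/\rho\ke)^{1/2}$ (for $n=2$), together with the elementary bound $|\ln(d\ke/\rho\ke)|\le|\ln(d\ke\rho\ke)|$, which holds since $0<d\ke\rho\ke<d\ke/\rho\ke<1$ by \eqref{eps0:1}, \eqref{eps0:2}.

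To prove the reduced inequality I would write $w=\la w\ra_{B^\pm}+u$ with $\la u\ra_{B^\pm}=0$. The constant part contributes $|\la w\ra_{B^\pm}|^{2}\vol(B^\pm_t)\le C\,t^{n}\|w\|^2_{\L(B^\pm)}$ by the Cauchy--Schwarz inequality and $\vol(B^\pm_t)\sim t^{n}$ (with $t^{n}=t^{2}$ when $n=2$), which is the first term in both cases. For the fluctuation $u$, the Poincaré inequality on the fixed Lipschitz domain $B^\pm$ gives $\|u\|_{\H^1(B^\pm)}\le C\|\nabla w\|_{\L(B^\pm)}$. When $n\ge3$, the Sobolev embedding $\H^1(B^\pm)\hookrightarrow\LL^{2^*}(B^\pm)$ with $2^*=\tfrac{2n}{n-2}$ and Hölder's inequality on $B^\pm_t$ give $\|u\|^2_{\L(B^\pm_t)}\le\|u\|^2_{\LL^{2^*}(B^\pm)}\vol(B^\pm_t)^{2/n}\le C\,t^{2}\|\nabla w\|^2_{\L(B^\pm)}$, since $1-2/2^*=2/n$. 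When $n=2$, one uses instead $\H^1(B^\pm)\hookrightarrow\LL^{p}(B^\pm)$ for every $p\in[2,\infty)$ with the sharp growth $\|u\|_{\LL^{p}(B^\pm)}\le C\sqrt{p}\,\|u\|_{\H^1(B^\pm)}$; Hölder's inequality with exponents $p/2$ and $p/(p-2)$ then yields $\|u\|^2_{\L(B^\pm_t)}\le C\,p\,(c\,t^{2})^{1-2/p}\|\nabla w\|^2_{\L(B^\pm)}$, and the choice $p\ceq 2|\ln t|+2$ (so $p\ge2$, while $|\ln t|\ge\ln2$ for $t<1/2$) makes $(c\,t^{2})^{-2/p}$ bounded and $p\le C|\ln t|$, leaving the factor $t^{2}|\ln t|$. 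Adding the two contributions proves the reduced inequality, and undoing the dilation yields \eqref{Fest}.

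The main obstacle is the planar case $n=2$: since $\H^1(B^\pm)$ does not embed into $\LL^\infty(B^\pm)$, a logarithm is unavoidable, and the factor $t^{2}|\ln t|$ is in fact sharp (as the $\H^1$-capacity potential of a small concentric disk shows), so a crude embedding would lose the optimal power; it is precisely the $\sqrt{p}$-growth of the two-dimensional Sobolev constant, balanced against the choice $p\sim|\ln t|$, that produces the right logarithmic factor. Everything else is routine: the change of variables and the Jacobian bookkeeping, the Cauchy--Schwarz bound $|\la w\ra_{B^\pm}|^{2}\le\vol(B^\pm)^{-1}\|w\|^2_{\L(B^\pm)}$, the Poincaré and Sobolev inequalities on the fixed domain $B^\pm$, and the verification of $|\ln(d\ke/\rho\ke)|\le|\ln(d\ke\rho\ke)|$ under \eqref{eps0:1}--\eqref{eps0:2}.
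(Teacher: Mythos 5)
Your proof is correct, and it takes a genuinely different route from the paper's. The paper argues by radial slicing: it first reduces $\|v\|_{\L(\wt D\ke^\pm)}$ to the trace on the hemisphere $C\ke^\pm=\partial \wt D\ke^\pm\setminus\Gamma$ via \eqref{lm:Fest:est1}, then passes to spherical coordinates and uses the fundamental theorem of calculus along rays together with Cauchy--Schwarz to control that trace by $\|v\|_{\L(B\ke^\pm)}$ and $\|\nabla v\|_{\L(B\ke^\pm)}$, the radial integrals $M\e=\int_{\wt d\ke}^{\rho\ke}\tau^{1-n}\d\tau$ and $N\e=\int_{\wt d\ke}^{\rho\ke}\tau^{n-1}\d\tau$ producing exactly the dimension-dependent coefficients in \eqref{Fest}. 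Your proof instead rescales to the fixed unit half-ball, splits $w$ into mean plus fluctuation, bounds the mean contribution by Cauchy--Schwarz (giving the $t^n$ term) and the fluctuation by the Sobolev embedding $\H^1\hookrightarrow\LL^{2^*}$ plus H\"older for $n\ge 3$, and by $\H^1\hookrightarrow\LL^p$ with the sharp $\sqrt{p}$-growth of the two-dimensional Sobolev constant balanced against $p\sim|\ln t|$ for $n=2$ (giving the $t^2$ and $t^2|\ln t|$ terms). Both produce identical final bounds; the paper's argument is entirely elementary and self-contained, whereas yours is conceptually cleaner but leans on the Trudinger-type asymptotics of the planar Sobolev constant. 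Interestingly, the balancing trick you use ($p\sim|\ln t|$) is the same one the paper itself employs in Lemma~\ref{lemma:I1} (see \eqref{pmax}, \eqref{lemma:c4p:est}) to handle the critical dimension $n=4$ in the $\nabla\phi\ke$ estimate, so your approach is very much in the paper's overall style, just deployed one lemma earlier.
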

	
	\begin{proof}
		Evidently, it is enough to prove \eqref{Fest} for $v\in  {C}^\infty(\overline{B^\pm\e})$.
		
		We denote $C\ke^\pm\ceq \partial \wt D\ke^\pm\setminus\Gamma$.
		Similarly to \eqref{trace}, we obtain the estimate 
		\begin{align}\label{lm:Fest:est1}
			\|v\|_{\L(\wt D\ke^\pm)}^2\leq 
			C\left(\wt d\ke\|v\|^2_{\L(C\ke^\pm)}+
			(\wt d\ke)^2\|\nabla v\|^2_{\L(\wt D\ke^\pm)}\right). 
		\end{align}
		We introduce  the spherical coordinate $(r,\theta)$ in $\overline{B\e^\pm\setminus \wt D\ke^\pm}$.
		Here $r\in [\wt d\ke,\rho\ke]$ stands for the distance to $x\ke$,
		$\theta=(\theta_1,{\dots},\theta_{n-1})$ are the angular coordinates ($\theta_k\in [0,\pi]$).
		One has
		$$
		v(\wt d\ke,\theta)=v(r,\theta)-\int_{\wt d\ke}^{r}{\partial
			v(\tau,\theta)\over\partial\tau}\d\tau,
		$$
		whence
		\begin{align*}
			|v(\wt d\ke,\theta)|^2&\le 2|v(r,\theta)|^2+2\left|\int_{\wt d\ke}^{r}{\partial
				v(\tau,\theta)\over\partial\tau}\d\tau\right|^2\\
			\notag
			&\leq
			2|v(r,\theta)|^2+
			2M\e \int_{\wt d\ke}^{\rho\ke}\left|{\partial
				v(\tau,\theta)\over\partial\tau}\right|^2\tau^{n-1}\d\tau,\text{\quad where }
			M\e\ceq \ds\int_{\wt d\ke}^{\rho\ke}\tau^{1-n}\d\tau.
		\end{align*}
		We denote $N\e\ceq \ds \int_{\wt d\ke}^{\rho\ke} r^{n-1}\d r $.
		Multiplying the estimate above by $$(N\e)^{-1}r^{n-1}(\wt d\ke)^{n-1}\prod_{j=1}^{n-2}\left(\sin\theta_j\right)^{n-1-j},$$ and then integrating
		over $r\in (\wt d\ke,\rho\ke)$, $\theta_j\in (0,\pi)$, $j=1,\dots,n-1$,
		we get
		\begin{equation}\label{lm:Fest:est3}
			\|v\|^2_{\L(C\ke^\pm)}
			\leq 2(\wt d\ke)^{n-1}\left((N\e)^{-1}\|v\|^2_{\L(B^\pm\ke\setminus \overline{\wt D^\pm\ke})}
			+
			M\e\|\nabla v\|^2_{\L(B^\pm\ke\setminus \overline{\wt D\ke^\pm})}\right).
		\end{equation}
		It is easy to see that
		\begin{gather}\label{M:eps}
			M\e\leq \begin{cases}C(\wt d\ke)^{ 2-n},&n\ge 3,\\
				C|\ln \wt d\ke|,&n=2
			\end{cases}
		\end{gather}
		(in the case $n=2$ we took into account that, due to \eqref{eps0:1}, \eqref{wtd:cond}, $\ln \wt d\ke<\ln \rho\ke <0$).
		Furthermore, due to  \eqref{wtd:cond}, one has
		\begin{gather}\label{N:eps}
			N\e\geq C\rho\ke^n.
		\end{gather}
		The required estimate \eqref{Fest} follows from
		\eqref{wtdke}, \eqref{lm:Fest:est1}--\eqref{N:eps} 
		(in the case $n=2$ we also have  take into account that, by virtue of \eqref{eps0:1}, \eqref{wtd:cond}, 
		$|\ln \wt d\ke|\geq C>0$). The lemma is proven.
	\end{proof}

	Let  $\square\subset\R^n$ be a cube, and  $\square\e\cong\eps\square$.
	One has the following estimate  \cite[Lemma~4.3]{KP18}:
	\begin{gather*}
		\forall v\in \mathsf{H}^2(\square\e)\text{ with }\int_{\square\e}v(x)\d x=0:\quad
		\|v\|_{\LL^p(\square\e)}\leq  C_{n,p}\cdot \eps^{n/p+(2-n)/2}\|v\|_{\mathsf{H}^2(\square\e)},
	\end{gather*}
	provided $p$ satisfies 
	\begin{gather}
		\label{p}
		1\leq p\leq \frac{2n}{n-4}\text{\; as\; }n\geq 5,\quad
		1\leq p<\infty\text{\; as\; }n=4,\quad
		1\le p\le \infty\text{\; as\; }n=2,3
	\end{gather} 
	(hereinafter for $p=\infty$ we use the convention $1/p=0$). Note that 
	the above restrictions on $p$  came from the Sobolev embedding theorem  \cite[Theorem~5.4 and Remark~5.5(6)]{Ad75}:
	if $D\subset\R^n$ is a bounded Lipschitz domain, then the space $\H^2(D)$ is embedded continuously into the space
	$\LL^{p}(D)$ provided $p$ satisfies \eqref{p}.
	Repeating verbatim the proof of \cite[Lemma~4.3]{KP18} for $B\ke^\pm\cong\rho\ke B^\pm$ 
	one arrives on the estimate below.
	
	\begin{lemma} 
		\label{lemma:sobolev} 
		One has
		\begin{gather}\label{sobolev}
			\forall v\in \mathsf{H}^2(B\ke^\pm)\text{ with }\int_{B\ke^\pm}v(x)\d x=0:\ 
			\|v\|_{\LL^p(B\ke^\pm)}\leq  C_{n,p}\cdot \rho\ke^{n/p+(2-n)/2}\|v\|_{\mathsf{H}^2(B\ke^\pm)}
		\end{gather}
		provided $p$  satisfies \eqref{p}. The constant $C_{n,p}$ depends on $n$ and $p$ only.
	\end{lemma}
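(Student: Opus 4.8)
The plan is to reduce \eqref{sobolev} to a single Sobolev inequality on a fixed reference domain by the same rescaling used throughout this subsection, following verbatim the argument of \cite[Lemma~4.3]{KP18}. Put $B^\pm\ceq\B(1,0)\cap\{x\in\R^n:\ \pm x^n>0\}$, and for $v\in\H^2(B\ke^\pm)$ define the rescaled function $w(y)\ceq v(x\ke+\rho\ke y)$, $y\in B^\pm$, obtained from the coordinate transformation \eqref{variables}. Because the Jacobian of \eqref{variables} is constant, the normalisation $\int_{B\ke^\pm}v\,\d x=0$ becomes $\int_{B^\pm}w\,\d y=0$.

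On the fixed bounded Lipschitz domain $B^\pm$ I would use two classical facts: first, the Sobolev embedding $\H^2(B^\pm)\hookrightarrow\LL^p(B^\pm)$ for $p$ in the range \eqref{p} quoted above from \cite[Theorem~5.4 and Remark~5.5(6)]{Ad75}; second, the Poincar\'e inequality $\|w\|_{\LL^2(B^\pm)}\le C\|\nabla w\|_{\LL^2(B^\pm)}$, available precisely because $w$ has zero mean. Combining them,
\[
\|w\|_{\LL^p(B^\pm)}\le C\|w\|_{\H^2(B^\pm)}\le C\bigl(\|\nabla w\|_{\LL^2(B^\pm)}+\|\nabla^2 w\|_{\LL^2(B^\pm)}\bigr).
\]

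It then remains to undo the scaling. A direct computation of how the relevant norms behave under \eqref{variables} gives $\|v\|_{\LL^p(B\ke^\pm)}=\rho\ke^{n/p}\|w\|_{\LL^p(B^\pm)}$, $\|\nabla v\|_{\LL^2(B\ke^\pm)}=\rho\ke^{n/2-1}\|\nabla w\|_{\LL^2(B^\pm)}$, and $\|\nabla^2 v\|_{\LL^2(B\ke^\pm)}=\rho\ke^{n/2-2}\|\nabla^2 w\|_{\LL^2(B^\pm)}$. Substituting these into the displayed inequality, using $\rho\ke<1$ (see \eqref{eps0:1}) to bound the second-derivative contribution $\rho\ke^{n/p+2-n/2}$ by $\rho\ke^{n/p+(2-n)/2}$, and noting $(2-n)/2=1-n/2$, one obtains
\[
\|v\|_{\LL^p(B\ke^\pm)}\le C\rho\ke^{n/p+(2-n)/2}\bigl(\|\nabla v\|_{\LL^2(B\ke^\pm)}+\|\nabla^2 v\|_{\LL^2(B\ke^\pm)}\bigr)\le C\rho\ke^{n/p+(2-n)/2}\|v\|_{\H^2(B\ke^\pm)},
\]
which is \eqref{sobolev}.

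There is no genuine obstacle here, since everything is a verbatim repetition of \cite{KP18}; the only point that requires attention — and the reason the hypothesis $\int_{B\ke^\pm}v\,\d x=0$ is essential — is the bookkeeping of the powers of $\rho\ke$: the Poincar\'e step is exactly what lets one discard the term $\|w\|_{\LL^2(B^\pm)}$, which upon rescaling would scale like $\rho\ke^{n/p}\cdot\rho\ke^{-n/2}\|v\|_{\LL^2(B\ke^\pm)}$ and thus yield the strictly weaker exponent $n/p-n/2$ in place of $n/p+(2-n)/2$. Accordingly I would not write out the elementary scaling computations in full.
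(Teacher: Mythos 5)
Your proof is correct and is precisely the scaling argument that the paper invokes by pointing to \cite[Lemma~4.3]{KP18} (rescale to the fixed half-ball $B^\pm$ via \eqref{variables}, apply the Sobolev embedding plus the zero-mean Poincar\'e inequality there, then undo the scaling and use $\rho\ke<1$ to collect the powers). The bookkeeping of exponents and the remark about why the zero-mean hypothesis is essential are both accurate.
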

	
	\begin{remark}
		Further  we will apply \eqref{sobolev} for the \emph{largest} $p$ satisfying \eqref{p}.
		For $n=4$ we are not able to choose the largest $p$ (in the dimension $4$ the embedding $\H^2\hookrightarrow\LL^p$ holds for any $p<\infty$, but not for $p=\infty$), and in this case we need the following estimate on the constant $C_{4,p}$ in the right-hand-side of \eqref{sobolev}
		\cite[Lemma~4.4]{KP21}:
		\begin{gather}\label{lemma:c4p:est}
			C_{4,p}\leq C  p,
		\end{gather}
		where the constant $C>0$ is independent of $p$. 
	\end{remark}

	\subsection{Abstract scheme}\label{subsec:3:2}
	
	Let $\HS$ be a Hilbert space, $\A\e$ and $\A$ be non-negative, self-adjoint, unbounded operators in $\HS$,  
	and 
	$\a\e$ and $\a$ be the associated 
	sesquilinear forms. Further, we introduce
	the energetic spaces $\HS^1\e$ and $\HS^1$ associated with the forms $\a\e$ and $\a$, respectively:
	\begin{equation}\label{H1}
		\begin{array}{ll}
			\HS^1\e=\dom(\a\e),&
			\|u\|_{\HS^1\e}^2=\a\e[u,u]+\|u\|^2_{\HS},\\[2mm]
			\HS^1=\dom(\a),&
			\|f\|_{\HS^1}^2=\a[f,f]+\|f\|^2_{\HS}.
		\end{array}
	\end{equation}
	We also introduce the Hilbert space   $\HS^2$ via
	\begin{gather} \label{H2}
		\HS^2\ceq\dom(\A),\quad
		\|f\|_{\HS^2}\ceq\|{(\A+\Id) f}\|_{\HS}.
	\end{gather} 
	
	The proof of Theorems~\ref{th1} and \ref{th2} is based on the following  abstract result from \cite{AP21}.

	\begin{proposition}[{\cite[Proposition~2.5]{AP21}}]
		\label{propA1}
		Let 
		$\J\e \colon \HS^1\to  \HS\e^1$, ${\J'\e }\colon {\HS^1\e}\to \HS^1$
		be linear operators satisfying 
		\begin{align} 
			\label{propA1:1}
			\|\J\e f- f\|_{\HS\e}&\leq \delta\e\|f\|_{\HS^1 },&& \forall f\in \HS^1 ,
			\\[1mm] 
			\label{propA1:2}
			\|\J'\e u -  u \|_{\HS}&\leq 
			\delta\e\|u\|_{ \HS\e^1},&&  \forall u\in \HS^1\e, 
			\\[1mm]
			\label{propA1:3}
			|\a\e[\J\e f,u]-\a[f,\J'\e u]  |&\leq 
			\delta\e\|f\|_{\HS^2 }\|u\|_{\HS^1\e},&& \forall f\in \HS^2 ,\ u\in \HS^1\e 
		\end{align}
		with some  $\delta\e\geq 0$.
		Then one has the estimate 
		\begin{align*}
			\forall f\in\HS:\quad	\|(\A\e+\Id)^{-1}f -\J\e (\A+\Id)^{-1}f\|_{\HS^1\e}\leq C\delta\e\|f\|_{\HS},
		\end{align*}
	with some absolute constant $C>0$.
	\end{proposition}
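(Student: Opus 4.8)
The plan is to run a standard energy estimate against a suitable test function. Put $u\ceq(\A+\Id)^{-1}f$ and $u\e\ceq(\A\e+\Id)^{-1}f$. Then $u\in\HS^2$ with $(\A+\Id)u=f$, so $\|u\|_{\HS^2}=\|f\|_\HS$; moreover $\|u\|_\HS\le\|f\|_\HS$ since $\A\ge0$, and $\|u\|_{\HS^1}^2=\a[u,u]+\|u\|_\HS^2=((\A+\Id)u,u)_\HS=(f,u)_\HS\le\|f\|_\HS^2$, so $\|u\|_{\HS^1}\le\|f\|_\HS$ as well. Similarly $u\e\in\dom(\a\e)=\HS^1\e$ with $(\A\e+\Id)u\e=f$. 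The quantity to estimate is $w\ceq u\e-\J\e u$; it lies in $\HS^1\e$ because $u\in\HS^2\subset\HS^1$ and $\J\e$ maps $\HS^1$ into $\HS^1\e$. The goal is $\|w\|_{\HS^1\e}\le C\delta\e\|f\|_\HS$.

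First I would derive the variational identity for $w$. Since $u\e\in\dom(\A\e)$, one has $\a\e[u\e,v]+(u\e,v)_\HS=(f,v)_\HS$ for all $v\in\HS^1\e$, whence by bilinearity
\[
\a\e[w,v]+(w,v)_\HS=(f,v)_\HS-\a\e[\J\e u,v]-(\J\e u,v)_\HS,\qquad v\in\HS^1\e.
\]
Next I would replace $\a\e[\J\e u,v]$ by $\a[u,\J'\e v]$ via \eqref{propA1:3}, at the cost of an error $\le\delta\e\|u\|_{\HS^2}\|v\|_{\HS^1\e}=\delta\e\|f\|_\HS\|v\|_{\HS^1\e}$, and then use that $u\in\dom(\A)$ and $\J'\e v\in\HS^1$ give $\a[u,\J'\e v]+(u,\J'\e v)_\HS=(f,\J'\e v)_\HS$. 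Substituting, the right-hand side becomes, up to that error,
\[
(f,v-\J'\e v)_\HS+\bigl[(u,\J'\e v)_\HS-(\J\e u,v)_\HS\bigr].
\]

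It then remains to bound both terms by $C\delta\e\|f\|_\HS\|v\|_{\HS^1\e}$. The first is $\le\|f\|_\HS\,\|v-\J'\e v\|_\HS\le\delta\e\|f\|_\HS\|v\|_{\HS^1\e}$ by \eqref{propA1:2}. For the bracketed term — the only spot requiring a small trick, since $\J\e$ and $\J'\e$ are \emph{not} mutually adjoint — I would insert $\pm(u,v)_\HS$ and rewrite it as $(u,\J'\e v-v)_\HS-(\J\e u-u,v)_\HS$; here the first summand is $\le\|u\|_\HS\|\J'\e v-v\|_\HS$, bounded via \eqref{propA1:2}, and the second is $\le\|\J\e u-u\|_\HS\|v\|_\HS$, bounded via \eqref{propA1:1}, both $\le\delta\e\|f\|_\HS\|v\|_{\HS^1\e}$ once one recalls $\|u\|_\HS,\|u\|_{\HS^1}\le\|f\|_\HS$ and $\|v\|_\HS\le\|v\|_{\HS^1\e}$. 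Collecting everything gives $|\a\e[w,v]+(w,v)_\HS|\le C\delta\e\|f\|_\HS\|v\|_{\HS^1\e}$ for every $v\in\HS^1\e$; taking $v=w$ and using $\|w\|_{\HS^1\e}^2=\a\e[w,w]+\|w\|_\HS^2$ from \eqref{H1} yields $\|w\|_{\HS^1\e}\le C\delta\e\|f\|_\HS$, which is the claim (recall $w=(\A\e+\Id)^{-1}f-\J\e(\A+\Id)^{-1}f$). I do not anticipate a real obstacle here: the argument is entirely soft. The only things to watch are the $\pm(u,v)_\HS$ splitting of the non-symmetric term and the routine bookkeeping that every norm of $u$ appearing is dominated by $\|f\|_\HS$ and that each estimate produces exactly the factor $\|v\|_{\HS^1\e}$ needed to close the argument by testing with $v=w$.
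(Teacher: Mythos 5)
Your proof is correct. Note first that the paper does not prove this proposition itself: it is taken verbatim (in the coinciding--Hilbert-spaces, identity--identification specialization) from \cite[Proposition~2.5]{AP21}, so there is no in-text proof to compare against. Your argument is the natural one, and it is essentially the standard weak-formulation/energy estimate that underlies the statement: set $w=u_\eps-\J_\eps u$, derive $\a_\eps[w,v]+(w,v)_{\HS}=(f,v)_{\HS}-\a_\eps[\J_\eps u,v]-(\J_\eps u,v)_{\HS}$, shift to the limit form via \eqref{propA1:3}, use the eigenvalue equation for $u$, split the cross term by inserting $\pm(u,v)_{\HS}$, and test with $v=w$. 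Each step is justified: $u\in\dom(\A)=\HS^2\subset\HS^1$ so $\J_\eps u$ is well defined, $\|u\|_{\HS^2}=\|f\|_{\HS}$ and $\|u\|_{\HS}\le\|u\|_{\HS^1}\le\|f\|_{\HS}$ follow from $(\A+\Id)u=f$ and $\A\ge0$, and the sign of $\a_\eps[w,w]+\|w\|^2_{\HS}=\|w\|^2_{\HS^1_\eps}\ge0$ lets you divide out at the end, yielding the claim with $C=4$.
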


	\begin{remark}
		In fact, Proposition~2.5 from \cite{AP21} covers even more general case, when the operators
		$\A\e$ and $\A$ act in distinct Hilbert spaces $\HS\e$ and $\HS$;
		in this  case one requires also certain identification operators between $\HS\e$ and $\HS$.
		However, if these spaces coincide and these  identification operators are chosen to be identity, Proposition~2.5 from \cite{AP21} 		reduces to Proposition~\ref{propA1}. 
	\end{remark}

	\subsection{Utilization of the abstract scheme}\label{subsec:3:3}
	
	We apply Proposition~\ref{propA1}   for 
	$
	\HS=\L(\Omega)=\L(\Omega\e)$,
	and the self-adjoint non-negative operators $\A\e$ and $\A$ being associated with the sesquilinear forms $\a\e$ \eqref{ae}  and $\a$  \eqref{a}, respectively. 
	We introduce the spaces $\HS\e^1$, $\HS^1$, $\HS^2$ as in \eqref{H1} and \eqref{H2}, i.e.
	\begin{gather*} 
		\begin{array}{lll}
			\HS^1\e=\H^1(\Omega\e),
			&
			\ds\|u\|^2_{\HS^1\e}
			&=\|u \|^2_{\H^1(\Omega\e)},
			\\[1.5ex]
			\HS^1=\H^1(\Omega\setminus\Gamma),
			&
			\ds\|f\|^2_{\HS^1}&=
			\|\nabla f^+\|^2_{\L(\Omega^+)}+
			\|\nabla f^-\|^2_{\L(\Omega^-)} 
			+\|\gamma^{1/2}[f]\|^2_{\L(\Gamma)}+\|f \|^2_{\HS},
			\\[1.5ex]
			\HS^2=\dom(\A),
			&\ds
			\|f\|_{\HS^2}^2
			&
			=\|-\Delta f^+ +f^+ \|^2_{\L(\Omega^+)}+\|-\Delta f^- +f^- \|^2_{\L(\Omega^-)}
		\end{array}
	\end{gather*} 
	(recall that   $f^\pm=f\restriction_{\Omega^\pm}$, and  $[f]=(f^+-f^-)\restriction_{\Gamma}$). Note that 
	\begin{gather}\label{HHH}
		\|f\|_{\HS}\leq \|f\|_{\HS^1}\leq \|f\|_{\HS^2}. 
	\end{gather}
	
	Since $\dom(\a\e)\subset\dom(\a)$, we define  
	$\J'\e: \HS\e^1\to \HS^1$ be equal to the identity operator:
	\begin{gather*}\label{wtJ}
		\J'\e u = u,\ u\in \H^1(\Omega\e).
	\end{gather*}
	With such a choice of   $\J'\e$ 
	\begin{gather}\label{automatic}
		\text{condition  \eqref{propA1:2} is fulfilled with $\delta\e=0$.}
	\end{gather}
	
	To define an appropriate operator $\J\e:\HS^1\to\HS\e^1$, we first need to introduce 
	two auxiliary functions $\phi\ke\in C(\R^n)$ and $\psi\ke\in C^\infty(\R^n)$
	via
	\begin{gather}\label{phipsi}
	\phi\ke(x)=
	\begin{cases}
		1,&|x-x\ke|\le d\ke,\\
		\ds
		{G(|x-x\ke|)- G(\wt d\ke) \over G(d\ke) - G(\wt d\ke)},&
		d\ke< |x-x\ke|< \wt d\ke,
		\\
		0,&\wt d\ke\le |x-x\ke|,
	\end{cases} \qquad \psi\ke(x)\ceq \psi\left({4|x-x\ke|\over \rho\ke}\right)
	\end{gather}
	Here $\wt d\ke$ is defined by \eqref{wtdke}, the function  $G:(0,\infty)\to\R$ is given by 
	\begin{gather}\label{Gt}
		G(t)\ceq
		\begin{cases}
			t^{2-n},&n\ge 3,\\
			-\ln t,&n=2,
		\end{cases}    
	\end{gather} 
	and $\psi\in {C}^\infty([0,\infty))$ is a fixed  cut-off  function  satisfying
	\begin{gather*}
		0\le \psi(t)\leq 1,\quad 
		\psi(t)=1\text{ as }t\le 1,\quad
		\psi(t)=0\text{ as }t\ge 2.
	\end{gather*}	
	
	Let $f\in\H^1(\Omega\setminus\Gamma)$. We define  
	\begin{gather*}
		(\J\e f)(x)\ceq  
		\begin{cases}
			\ds f^+ (x)+\suml_{k\in\N}\left(
			\left(\langle f^+ \rangle_{B\ke^+}-f^+(x)\right) \phi^+\ke(x)-
			{1\over 2}f\ke U^+\ke(x)\psi\ke^+(x)
			\right),& x\in\Omega^+,
			\\[2mm]
			\ds f^-(x)+\suml_{k\in\N}\left(
			\left(\langle f^- \rangle_{B\ke^-}-f^-(x)\right) \phi^-\ke(x)+
			{1\over 2}f\ke U^-\ke(x)\psi\ke^-(x)
			\right),& x\in\Omega^-.
		\end{cases}
	\end{gather*}
	Here $\phi\ke^\pm\ceq\phi\ke\restr_{\Omega^\pm}$, $\psi\ke^\pm\ceq\psi\ke\restr_{\Omega^\pm}$,
	$U\ke^\pm\ceq U\ke\restr_{\Omega^\pm}$, where $U\ke(x)$ is the solution to   \eqref{BVP:cap}  extended by $0$ to  $\R^n\setminus B\ke$, and 
	$$f\ke\ceq \la f^+ \ra_{B\ke^+}-\la f^- \ra_{B\ke^-}.$$
	One has $\supp(\phi\ke)\subset B\ke$,  
	$\phi\ke=1$ in
	$\overline{\B(d\ke,x\ke)}$; the same hold for  $\psi\ke$ (cf.~\eqref{eps0:2}). Using these properties we conclude that 
	$(\J\e f)^\pm\in \H^1(\Omega^\pm)$,
	furthermore,  
	the traces of $(\J\e f)^+$ and $(\J\e f)^-$
	coincide on $D\ke$ (with the constant  ${1\over 2}\big(\la f^+ \ra_{B\ke^+}+\la f^- \ra_{B\ke^-}\big)$).
	Thus, $\J\e f\in \H^1(\Omega\e)$, and we have a well-defined linear operator $\J\e:\HS^1\to\HS^1\e$. 
	
	Our goal is to show that the above introduced operators $\J\e$ and $\J'\e$ satisfy 
	the conditions \eqref{propA1:1} and \eqref{propA1:3} with some $\delta\e\to 0$.
	
	Firth, we establish some properties of the functions from 	the domain of the limiting operator $\A$.
	Recall that we assume $\dist(\Gamma,\partial\Omega)>0$ (cf.~\eqref{h}). We introduce the domains 
	\begin{gather*}
		O^\pm\ceq \left\{x=(x',x^n):\  |x^n|<{\dist(\Gamma,\partial\Omega)\over 2},\ \pm x^n>0\right\}.
	\end{gather*}
	
	\begin{lemma}\label{lemma:H2}
		Let $f\in\HS^2$. Then  $f^\pm \in \H^2(O^\pm)$. 
		Furthermore,  the following estimate holds true:
		\begin{gather}\label{H2est}
			\forall f\in\HS^2:\quad \|f^\pm \|_{\mathsf{H}^2(O^\pm)}\leq
			C \|f\| _{\HS^2} .
		\end{gather}
	\end{lemma}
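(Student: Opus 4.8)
The plan is to establish interior elliptic regularity away from $\partial\Omega$ and combine it with the boundary conditions on $\Gamma$. Let $f\in\HS^2=\dom(\A)$, so by \eqref{A:domain} we have $f^\pm\in\H^1(\Omega^\pm)$, $\Delta f^\pm\in\L(\Omega^\pm)$, and the Neumann-type traces on $\partial\Omega^\pm$ are square-integrable, with $\partial f^\pm/\partial\nu^\pm|_\Gamma=\mp\gamma[f]$ and $\partial f^\pm/\partial\nu^\pm|_{\partial\Omega^\pm\setminus\Gamma}=0$. Write $h^\pm\ceq-\Delta f^\pm+f^\pm\in\L(\Omega^\pm)$, so that $\|h^\pm\|_{\L(\Omega^\pm)}\le\|f\|_{\HS^2}$ by definition of the $\HS^2$-norm. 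The domain $O^\pm$ is a slab of half-thickness $\dist(\Gamma,\partial\Omega)/2$ lying strictly inside $\Omega^\pm$; its boundary consists of the flat piece $\Gamma$ (where $f^\pm$ satisfies an inhomogeneous Neumann condition) and the parallel flat piece at height $\pm\dist(\Gamma,\partial\Omega)/2$, which sits in the interior of $\Omega^\pm$.

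First I would treat the interior face. Using a cut-off $\chi\in C^\infty(\R^n)$ with $\chi=1$ on $O^\pm$ and $\supp\chi$ contained in a slightly larger slab that still stays away from $\partial\Omega$, standard interior elliptic regularity for $-\Delta+\Id$ (difference-quotient argument, or Caccioppoli-type estimates applied to $\chi f^\pm$) gives $\chi f^\pm\in\H^2$ with $\|\chi f^\pm\|_{\H^2}\le C(\|h^\pm\|_{\L(\Omega^\pm)}+\|f^\pm\|_{\L(\Omega^\pm)})\le C\|f\|_{\HS^2}$; this controls $f^\pm$ in $\H^2$ up to the interior face. For the face on $\Gamma$, the relevant boundary data is $\partial f^\pm/\partial\nu^\pm|_\Gamma=\mp\gamma[f]$. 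Since $\gamma\in C^1(\Gamma)\cap\mathsf{W}^{1,\infty}(\Gamma)$ and $[f]\in\H^{1/2}(\Gamma)$ (as a difference of $\H^{1/2}$-traces of $\H^1$ functions), the product $\gamma[f]$ lies in $\H^{1/2}(\Gamma)$, with $\|\gamma[f]\|_{\H^{1/2}(\Gamma)}\le C\|f\|_{\HS^1}\le C\|f\|_{\HS^2}$ by \eqref{HHH} and the trace theorem. Then $\H^2$ elliptic regularity up to a flat boundary with an $\H^{1/2}$ Neumann datum (again via tangential difference quotients, which commute with the flat Neumann condition, plus recovering the normal second derivative from the equation $\partial_n^2 f^\pm=\Delta' f^\pm - h^\pm + f^\pm$) yields $f^\pm\in\H^2$ near $\Gamma$ with the estimate $\|f^\pm\|_{\H^2}\le C(\|h^\pm\|_{\L}+\|\gamma[f]\|_{\H^{1/2}(\Gamma)}+\|f^\pm\|_{\H^1})\le C\|f\|_{\HS^2}$.

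Patching the two local estimates with a partition of unity subordinate to the interior face and the $\Gamma$-face of $\partial O^\pm$ gives $f^\pm\in\H^2(O^\pm)$ together with \eqref{H2est}. The main obstacle I anticipate is purely bookkeeping at the edge where the two boundary pieces of $O^\pm$ meet the lateral sides: one must choose the slab $O^\pm$ (and the auxiliary larger slab) so that no genuine corner regularity issue arises, which is exactly why $O^\pm$ is taken as a full translation-invariant slab rather than a bounded piece — the lateral directions are handled by translation/difference quotients without boundary terms, and $\dist(\Gamma,\partial\Omega)>0$ from \eqref{h} guarantees the interior face never touches $\partial\Omega$. A secondary technical point is that $[f]\in\H^{1/2}(\Gamma)$ and the regularity of $\gamma$ must be just enough to keep $\gamma[f]\in\H^{1/2}(\Gamma)$; since multiplication by a $\mathsf{W}^{1,\infty}$ function is bounded on $\H^{1/2}$, this is fine. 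Everything else is a routine application of the Nirenberg difference-quotient method, so I would state it as such and not belabor the constants.
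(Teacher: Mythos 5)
Your proof is correct in substance and reaches the same conclusion, but it takes a noticeably different technical route than the paper. The paper's proof does not use global difference quotients on the infinite slab. Instead it tiles $O^\pm$ by congruent unit cells $O_i^\pm$ (indexed over $\Z^{n-1}$) with overlapping enlargements $\wh O_i^\pm$, invokes the local $\H^2$-up-to-the-boundary regularity theorem from McLean (Theorem~4.18(ii)) on each bounded cell — with a constant that is automatically uniform in $i$ by congruence — and then sums the local estimates, using the finite-overlap property of the $\wh O_i^\pm$ to control the sum by the global $\HS^2$-norm (the factor $2^{n-1}$ in \eqref{H2est:1}--\eqref{H2est:3}). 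What this localization buys is that the cited theorem is stated for bounded Lipschitz domains, so it applies cleanly to each cell, and all the analytic work is packaged into a reference; the only genuine work is the bookkeeping of the covering. Your global approach — tangential Nirenberg difference quotients on the translation-invariant slab (which commute with the flat Neumann condition on $\Gamma$ and never see $\partial\Omega$ thanks to \eqref{h}), then recovering $\partial_n^2 f^\pm$ from the equation, then a partition of unity between the two faces — sidesteps the covering/summation step and handles the unboundedness natively, at the cost of having to re-derive (or assert as "routine") the boundary regularity rather than citing it. Both approaches hinge on the same two key inputs, which you correctly identify: that $\gamma[f]\in\H^{1/2}(\Gamma)$ because $\gamma\in C^1(\Gamma)\cap\mathsf{W}^{1,\infty}(\Gamma)$ acts boundedly on $\H^{1/2}$ and $[f]$ is an $\H^{1/2}$-trace, and that $\dist(\Gamma,\partial\Omega)>0$ keeps $O^\pm$ away from any $\delta'$--Neumann corner (this is exactly the obstruction explained in Remark~\ref{rem:intersection}). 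So your proposal is a valid alternative, slightly more self-contained in its use of the flat geometry, while the paper's is more modular via the cell decomposition plus a citation.
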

	
	\begin{proof}
		For $i=(i^1,\dots,i^{n-1})\in\Z^{n-1}$ 
		we introduce the sets
		\begin{align*}
			\square_{i}& \ceq 
			\left\{x=(x^1,\dots,x^{n-1},0)\in\Gamma :\ |x^j - i^j|<{1\over 2} ,\ j=1,\dots,n-1 \right\},
			\\
			\wh\square_{i}& \ceq 
			\left\{x=(x^1,\dots,x^{n-1},0)\in\Gamma :\ |x^j - i^j|<1 ,\ j=1,\dots,n-1 \right\},
			\\
			{ O_{i}^\pm}&\ceq 
			\left\{x=
			(x',x^n)\in\R^n :\   x'\in\square_i,\ |x^n|<{1\over 2}\dist(\Gamma,\partial\Omega),\ \pm x^n>0\right\},
			\\
			\wh O_{i}^\pm&\ceq 
			\left\{x=(x',x^n)\in\R^n :\   x'\in\wh\square_i,\ |x^n|< \dist(\Gamma,\partial\Omega),\ \pm x^n>0\right\}.
		\end{align*}
		Note that 
		\begin{gather}\label{cup:square}
			\Gamma=\cupl_{i\in\Z^n}\overline{ \square_{i}},
			\quad
			\overline{O^\pm}=\cupl_{i\in\Z^n}\overline{ O^\pm_{i}},
			\quad 
			\square_{i}\cap \square_{j}=\emptyset,\ 
			O^\pm_{i}\cap  O^\pm_{j}=\emptyset
			\text{ if }i\not= j.
		\end{gather}
		One has (cf.~\eqref{A:domain}):
		\begin{gather}\label{regularies:McLean:1}
			f^\pm \in \H^1(\wh O_{i}^\pm),\quad  
			\Delta f^\pm \in\L(\wh O_{i}^\pm).
		\end{gather}
	 Since  $ f^\pm \in \H^{1/2}(\wh\square_{i})$ and $\gamma\in C^1(\Gamma)$,
			we have $\gamma f^\pm\in \H^{1/2}(\wh\square_{i})$ (see, e.g., \cite[Theorem~3.20]{McL200}),   whence
		\begin{gather}\label{regularies:McLean:2}
			{\partial  f^\pm\over\partial   \nu^\pm}\restriction_{\wh\square_{i}}=\mp\gamma[f]\restriction_{\wh\square_{i}}\in \H^{1/2}(\wh\square_{i}).
		\end{gather}
		By virtue of  \cite[Theorem~4.18(ii)]{McL200})  
		the properties  \eqref{regularies:McLean:1}--\eqref{regularies:McLean:2}  imply
		$f \in\H^2( O_{i}^\pm)$, moreover, the following estimate holds true:
		\begin{gather}\label{H2est:0}
			\|f^\pm \|^2_{\mathsf{H}^2(  O_{i}^\pm)}\leq
			C\left(\|f^\pm \|^2_{\H^1(\wh O_{i}^\pm)}+
			\|\gamma[f] \|_{\H^{1/2}(\wh\square_{i}^\pm)}^2+\|-\Delta f^\pm+f^\pm \|_{\L(\wh O_{i}^\pm)}^2\right)
		\end{gather}
		(evidently, the constant $C$ in \eqref{H2est:0} is independent of $i$).
		One has:
		\begin{align}\notag
			\sum_{i\in\Z^n}\|-\Delta f^\pm +f^\pm \|^2_{\L(\wh O_{i}^\pm)}
			&\leq 2^{n-1}\|-\Delta f^\pm +f^\pm \|^2_{\L(\cup_{i\in\Z^n}\wh O_{i}^\pm)}
			\\
			&\leq   
			2^{n-1}\|-\Delta f^\pm +f^\pm \|^2_{\L(\Omega^\pm)}
			\leq 2^{n-1}\|f\|_{\HS^2}^2,\label{H2est:1}
		\end{align}	
		and, similarly,
		\begin{align}	
			\label{H2est:2}
			 \sum_{i\in\Z^n}\|f^\pm \|^2_{\H^1(\wh O_{i}^\pm)}
			\leq 2^{n-1}\|f\|^2_{\HS^1}
		\end{align}
		(the factor $2^n$ comes from the fact that each $x\in \cup_{i\in\Z^n}\wh O_{i}^\pm$ 
		belongs to $2^n$ sets $\overline{\wh O_i^\pm}$).
		Furthermore, introducing the function  $\wh\gamma \in C^1(\R^n)\cap \mathsf{W}^{1,\infty}(\R^n)$ via
		$\wh\gamma (x',x^n)\ceq\gamma(x')$, we get
		\begin{align}\notag 
			\sum_{i\in\Z^n}\|\gamma[f]\|_{\H^{1/2}(\wh\square_{i})}^2 
			&\leq 
			2\sum_{i\in\Z^n}\left(\|\gamma f^+\|^2_{\H^{1/2}(\wh\square_{i})}+\|\gamma f^-\|^2_{\H^{1/2}(\wh\square_{i})} \right)
			\\
			&\notag\leq 
			C\sum_{i\in\Z^n}\left(\|\wh \gamma f^+\|^2_{\H^1(\wh O_{i}^+)}+\|\wh \gamma f^- \|^2_{\H^{1 }(\wh O_{i}^-)}\right)
			\\ 
			&\leq
			2^nC\left(\|\wh\gamma f \|^2_{\H^1(O^+)}+\|\wh\gamma f \|^2_{\H^1(O^-)}\right)\leq 
			C_1\|f\|_{\H^1(\Omega\setminus\Gamma)}^2
			\leq C_1\|f\|_{\HS^1}^2.
			\label{H2est:3} 
		\end{align}
		Combining \eqref{cup:square}, \eqref{H2est:0}--\eqref{H2est:3} and taking into account \eqref{HHH}, we arrive at  
		\eqref{H2est}.
	\end{proof}

	\begin{lemma}
		\label{lemma:main:1}
		One has
		\begin{gather}\label{mainest:1}
			\forall f\in\HS^1 :\quad \|\J\e f - f \|_{\HS} \leq C\mu\e \|f\|_{\HS^1}.
		\end{gather}
	 \end{lemma}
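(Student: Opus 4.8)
The plan is to estimate $\J\e f - f$ in $\L(\Omega^+)$ (the $\Omega^-$ case being identical) by splitting the correction into the two terms appearing in the definition of $\J\e$. Write
\begin{gather*}
	(\J\e f)^+ - f^+ = \suml_{k\in\N}\left(\langle f^+\rangle_{B\ke^+}-f^+\right)\phi\ke^+ - {1\over 2}\suml_{k\in\N} f\ke U\ke^+ \psi\ke^+ \ceq I\e - I\!I\e,
\end{gather*}
and bound each sum separately. Since the balls $B\ke$ are pairwise disjoint by \eqref{assump:1}, and since $\supp(\phi\ke)$, $\supp(U\ke\psi\ke)\subset B\ke$, the $\L(\Omega^+)$-norm of each sum is the $\ell^2$-sum of the norms of the individual summands over $k$. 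For the first sum, $\phi\ke^+$ is supported in $\wt D\ke^+$ where $|\phi\ke|\le 1$, so $\|(\langle f^+\rangle_{B\ke^+}-f^+)\phi\ke^+\|_{\L(\Omega^+)}\le \|f^+-\langle f^+\rangle_{B\ke^+}\|_{\L(\wt D\ke^+)}$; this I would control by combining the $\L(\wt D\ke^+)$-to-$\L(B\ke^+)$ estimate \eqref{Fest} of Lemma~\ref{lemma:Fest} (applied to $v=f^+-\langle f^+\rangle_{B\ke^+}$) with the Poincaré inequality \eqref{Poincare}. A short computation, keeping track of the powers of $d\ke/\rho\ke$ and the logarithmic factors in each dimension, shows the bound is $C\,\eta\ke\,\rho\ke\,\|\nabla f^+\|_{\L(B\ke^+)}$, with $\eta\ke$ as in \eqref{etake}; squaring, summing over $k$ and using $\rho\ke<1$ together with $\sup_k\eta\ke\le\mu\e$ gives $\|I\e\|_{\L(\Omega^+)}\le C\mu\e\|\nabla f^+\|_{\L(\Omega^+)}$.

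For the second sum I would use the Friedrichs inequality \eqref{Friedrichs} for $U\ke\psi\ke\in\H^1_0(B\ke)$ to get $\|U\ke^+\psi\ke^+\|_{\L(B\ke^+)}\le C\rho\ke\|\nabla(U\ke\psi\ke)\|_{\L(B\ke)}$, and then I need $\|\nabla(U\ke\psi\ke)\|_{\L(B\ke)}^2\le C\,\capty(D\ke)$: this follows because $\psi\ke$ equals $1$ on $\B(\rho\ke/4,x\ke)$, so on the annulus where $\nabla\psi\ke\ne 0$ one has $U\ke$ bounded by (a constant multiple of) its value on that sphere, which the explicit harmonic profile and \eqref{capty:eq} bound in terms of $\capty(D\ke)$; the gradient-of-$\psi\ke$ term carries a factor $\rho\ke^{-1}$ that is absorbed by the volume $\rho\ke^n$ of the annulus and the smallness of $U\ke$ there. (In $n\ge 3$ this is immediate from $0\le U\ke\le 1$; in $n=2$ the logarithmic profile is why the extra term $\rho\ke^{1/2}\gamma\ke|\ln\rho\ke|$ appears in $\mu\e$ in \eqref{etae}.) The coefficient $|f\ke|=|\langle f^+\rangle_{B\ke^+}-\langle f^-\rangle_{B\ke^-}|$ I would compare, via \eqref{mean:mean}, to $|\langle[f]\rangle_{S\ke}|$ up to an error $C\rho\ke^{(2-n)/2}(\|\nabla f^+\|_{\L(B\ke^+)}+\|\nabla f^-\|_{\L(B\ke^-)})$, and then bound $|\langle[f]\rangle_{S\ke}|^2\,\capty(D\ke)\le C\rho\ke^{n-1}\gamma\ke\,|\langle[f]\rangle_{S\ke}|^2/\rho\ke^{n-1}\cdot(\dots)$; more directly, $\capty(D\ke)\le C\rho\ke^{n-1}\gamma\ke$ by the definition of $\gamma\ke$ and the standard capacity bound, so $\capty(D\ke)|\langle[f]\rangle_{S\ke}|^2\le C\gamma\ke\|[f]\|_{\L(S\ke)}^2$, and summing over $k$ gives a term controlled by $\sup_k\gamma\ke\cdot\|[f]\|^2_{\L(\Gamma)}\le C\|\gamma^{1/2}[f]\|^2_{\L(\Gamma)}$ (using \eqref{assump:3} and that $\gamma$ is bounded below on the relevant set — or, more carefully, by pairing $\capty(D\ke)\langle[f]\rangle_{S\ke}$ against $[f]$ and invoking assumption \eqref{assump:main}). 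Either way the total is $\le C\mu\e\|f\|_{\HS^1}$ after accounting for the $\rho\ke$ from Friedrichs and the $\rho\ke^{(2-n)/2}$ error, whose product with the remaining powers again assembles into $\eta\ke$.

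Assembling the two estimates and doing the same on $\Omega^-$ yields $\|\J\e f-f\|_{\HS}\le C\mu\e(\|\nabla f^+\|_{\L(\Omega^+)}+\|\nabla f^-\|_{\L(\Omega^-)}+\|\gamma^{1/2}[f]\|_{\L(\Gamma)})\le C\mu\e\|f\|_{\HS^1}$, which is \eqref{mainest:1}. The main obstacle I anticipate is the bookkeeping for the second sum: one must simultaneously (i) extract the sharp power $\eta\ke$ from the interplay of the Friedrichs factor $\rho\ke$, the capacity bound, and the mean-value error \eqref{mean:mean}, and (ii) handle the $n=2$ case where the logarithmic profile of $U\ke$ forces the extra term in $\mu\e$ — in particular verifying $\|\nabla(U\ke\psi\ke)\|^2_{\L(B\ke)}\le C(\capty(D\ke)+\rho\ke\gamma\ke^2|\ln\rho\ke|^2)$ or the analogous clean bound needs the explicit two-dimensional harmonic function. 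The first sum, by contrast, is a routine application of Lemmas~\ref{lemma:poincare:friedrichs:trace} and \ref{lemma:Fest}.
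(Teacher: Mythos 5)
The first half of your plan — bounding $\sum_k\|(\la f^+\ra_{B\ke^+}-f^+)\phi\ke^+\|^2_{\L(\wt D\ke^+)}$ by combining \eqref{Fest} with \eqref{Poincare} — is exactly what the paper does and is fine. The second half, however, takes a detour that hits a genuine gap.

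Your plan for the $U\ke^+\psi\ke^+$-term is (i) apply Friedrichs to $U\ke\psi\ke$, which requires showing $\|\nabla(U\ke\psi\ke)\|^2_{\L(B\ke)}\le C\capty(D\ke)$, and (ii) relate $f\ke$ to $\la[f]\ra_{S\ke}$ via \eqref{mean:mean} and then pair $\capty(D\ke)|\la[f]\ra_{S\ke}|^2$ against $\|\gamma^{1/2}[f]\|^2_{\L(\Gamma)}$. Step (i) is an unnecessary complication: since $|\psi\ke^+|\le 1$ you can simply drop it, giving $\|U\ke^+\psi\ke^+\|_{\L(B\ke^+)}\le\|U\ke\|_{\L(B\ke)}$, apply Friedrichs directly to $U\ke\in\H^1_0(B\ke)$, and use $\|\nabla U\ke\|^2_{\L(B\ke)}=\capty(D\ke)$ from \eqref{capty:eq}; the annular analysis of $\nabla(U\ke\psi\ke)$ never needs to happen here. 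Step (ii) is where the real problem lies. You acknowledge that $\sup_k\gamma\ke\|[f]\|^2_{\L(\Gamma)}\le C\|\gamma^{1/2}[f]\|^2_{\L(\Gamma)}$ needs $\gamma$ bounded below — but the paper explicitly allows $\gamma$ to vanish (Subsection~4.1 gives $\gamma\equiv 0$), so this step is not available. Your fallback, invoking \eqref{assump:main}, also fails: that assumption requires $g\in\H^{3/2}(\Gamma)$, while for a general $f\in\HS^1$ one only has $[f]\in\H^{1/2}(\Gamma)$. The correct route, which the paper takes, avoids $\la[f]\ra_{S\ke}$ entirely: bound $|\la f^\pm\ra_{B\ke^\pm}|^2\le C\rho\ke^{1-n}\bigl(\|f^\pm\|^2_{\L(S\ke)}+\|\nabla f^\pm\|^2_{\L(B\ke^\pm)}\bigr)$ by the trace inequality \eqref{trace}, multiply by $\|U\ke^+\|^2_{\L(B\ke^+)}\le C\rho\ke^{n+1}\gamma\ke$, and sum; the result is controlled by $\sup_k(\rho\ke^2\gamma\ke)\|f\|^2_{\HS^1}$ using the standard trace bound $\|f^\pm\|_{\L(\Gamma)}\le C\|f^\pm\|_{\H^1(\Omega^\pm)}$, never touching the $\|\gamma^{1/2}[f]\|$ part of the $\HS^1$-norm.

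A secondary point: you suggest that the extra term $\sup_k\bigl(\rho\ke^{1/2}\gamma\ke|\ln\rho\ke|\bigr)$ in $\mu\e$ for $n=2$ originates in this lemma, from the logarithmic profile of $U\ke$. It does not. With the paper's approach, the bound one obtains here is $\zeta\e\le C\sup_k\eta\ke$ in \emph{all} dimensions (cf.\ \eqref{zeta:eta}); the $n=2$ extra term in $\mu\e$ enters only later, through the estimate of $Q^{2,\pm}\e$ in Lemma~\ref{lemma:Q2}, where $\Delta(U\ke^\pm\psi\ke^\pm)$ genuinely appears.
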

	
	\begin{proof}
		First we estimate $\capty(D\ke)$.
		By virtue of \eqref{capty:inf},
		we get
		\begin{gather*}
			\capty(D\ke)\leq  
			\|\nabla \phi\ke\|^2_{\L(B\ke)}
		\end{gather*}
		(recall that   $\phi\ke$ is given in \eqref{phipsi}).
		Straightforward computations yield  $\|\nabla \phi\ke \|^2_{\L(B\ke)}\leq C(G(d\ke))^{-1}$,
		whence, taking into account \eqref{assump:3}, one arrives at the estimate
		\begin{gather}\label{capty:bound}
			\capty( D\ke )\leq   C\rho\ke^{n-1}\gamma\ke.
		\end{gather}
		
		Let $f\in\HS$. Recall that the sets $B\ke^\pm$, $\wt D\ke^\pm$ are defined in \eqref{Bke}, \eqref{Fke}; note that, due to \eqref{eps0:1+} and \eqref{wtd:cond}, $\wt D\ke^\pm\subset B\ke^\pm\subset \Omega^\pm$. 
		One has $|\phi\ke^\pm|\leq 1$, $|\psi\ke^\pm|\leq 1$, furthermore, we have
		\begin{gather}
			\label{supp:phi}
			\supp(\phi\ke^\pm) \subset \overline{\wt D\ke^\pm},\\
			\label{supp:psi}
			\supp(\psi\ke^\pm) \subset \overline{B\ke^\pm}. 
		\end{gather}
		Using these properties and \eqref{assump:1},
		we obtain
		\begin{align}\notag
			\|\J\e f - f \|_{\L(\Omega^+)}^2
			&=
			\suml_{k\in\N}
			\bigg\|
			\left(\la f^+\ra_{B\ke^+}-f^+ \right) \phi\ke^++
			{1\over 2}U\ke^+\left(\la f^-\ra_{B\ke^-}-\la f^+\ra_{B\ke^+}\right)\psi\ke^+
			\bigg\|^2_{\L(B\ke^+)}
			\\
			&\leq
			\suml_{k\in\N}
			\left( 
			2\|\la f^+\ra_{B\ke^+}-f^+ \|^2_{\L(\wt D\ke^+)}+
			\left(|\la f^-\ra_{B\ke^-}|^2+|\la f^+\ra_{B\ke^+}|^2\right)\|U\ke^+\|^2_{\L(B\ke^+)}
			\right). \label{lemma:prop1:1}
		\end{align}
		Using  the estimates \eqref{Poincare} and \eqref{Fest}, we get
		\begin{align}\notag 
			n\ge 3:\quad\sum_{k\in\N}\|f^+ - \la f^+\ra_{B\ke^+} \|^2_{\L(\wt D\ke^+)}&\leq
			C\sum_{k\in\N}\left(\left({d\ke \over \rho\ke}\right)^n\|f^+-\la f^+\ra_{B\ke^+ } \|^2_{\L(B\ke^+)}+d\ke^2\|\nabla f^+\|^2_{\L(B\ke^+)}\right)\\
			&\leq
			C_1\sum_{k\in\N}\left( {d\ke^{n} \over \rho\ke^{n-2}} +d\ke^2 \right)\|\nabla f^+\|^2_{\L(B\ke^+)}\notag\\
			&\leq
			C_2\sum_{k\in\N}d\ke^2  \|\nabla f^+\|^2_{\L(B\ke^+)} \leq
			C_2\sup_{k\in\N}d\ke^2  \| f\|^2_{\HS^1}  \label{lemma:prop1:2}
		\end{align}
		(in the penultimate estimate we also use  $\rho\ke^{2-n}<d\ke^{2-n}$).
		Similarly, one has
		\begin{align}\label{lemma:prop1:2+}
			n=2:\quad\sum_{k\in\N}\|f^+ - \la f^+\ra_{B\ke^+} \|^2_{\L(\wt D\ke^+)}\leq
			C \sup_{k\in\N}(d\ke \rho\ke|\ln (d\ke \rho\ke)|)  \| f\|^2_{\HS^1}.
		\end{align} 
		Further,  the  estimate \eqref{Friedrichs}  together with 
		\eqref{capty:bound}, \eqref{capty:eq} yield
		\begin{align} 
			\|U\ke^+\|^2_{\L(B\ke^+)}\leq 	\|U\ke\|^2_{\L(B\ke)}
			\leq
			C\rho\ke^2 \|\nabla U\ke\|^2_{\L(B\ke )}  =
			C \rho\ke^2 \capty(D\ke)\leq 
			C_1\rho\ke^{n+1}\gamma\ke.
			\label{lemma:prop1:4}
		\end{align} 
		Using the Cauchy-Schwarz inequality,   \eqref{trace}, \eqref{assump:2} and \eqref{eps0:1},
		we get
		\begin{align}\notag
			|\la f^\pm\ra_{B\ke^\pm}|^2\leq 
			{C \rho\ke^{-n}}\|f^\pm \|^2_{\L(B\ke^\pm)} &\leq 
			{C \rho\ke^{-n}}\left(\rho\ke \|f^\pm\|^2_{\L(S\ke)}+\rho\ke^2 \|\nabla f^\pm\|^2_{\L(B\ke^\pm)}\right)\\ \label{lemma:prop1:3}
			&\leq 
			{C \rho\ke^{1-n}}\left( \|f^\pm\|^2_{\L(S\ke)}+  \|\nabla f^\pm\|^2_{\L(B\ke^\pm)}\right).
		\end{align}
		Combining \eqref{lemma:prop1:4}, \eqref{lemma:prop1:3} and the trace estimate
		\begin{gather}
			\label{trace:standard}
			\|f^\pm\|_{\L(\Gamma)}\leq C\|f^\pm \|_{\H^1(\Omega^\pm)}
		\end{gather}
		we arrive at  
		\begin{multline} 
			\suml_{k\in\N}  \left(|\la f^-\ra_{B\ke^-}|^2+|\la f^+\ra_{B\ke^+}|^2\right)\|U\ke^+\|^2_{\L(B\ke^+)}
			\leq
			C\suml_{k\in\N}\rho\ke^2\gamma\ke\left(\|f^-\|^2_{\L(S\ke)}+\|\nabla f^+\|^2_{\L(B\ke^+)}\right.\\ 
			\left.+\|f^+\|^2_{\L(S\ke)}+\|\nabla f^+\|^2_{\L(B\ke^+)}\right)
			\leq C  \sup_{k\in\N} (\rho\ke^2\gamma\ke)\|f\|^2_{\HS^1}.
			\label{lemma:prop1:4+}
		\end{multline}
		It follows   from  \eqref{lemma:prop1:1}, \eqref{lemma:prop1:2}, \eqref{lemma:prop1:4+} 
		that 
		\begin{gather}\label{lemma:prop1:est+}
			\|\J\e f - f \|_{\L(\Omega^+)} \leq C \zeta\e \|f\|_{\HS^1},
		\end{gather}
			where $\zeta\e$ is given by 
		\begin{gather*}
			\zeta\e\ceq 
			\begin{cases}
				\left( \supl_{k\in\N} {d\ke^2}  + \supl_{k\in\N}({\rho\ke^2}
				{\gamma\ke})\right)^{1/2}     ,&n\ge 3,\\[2mm]
				\left( \supl_{k\in\N}(d\ke \rho\ke |\ln (d\ke\rho\ke)|)+ \supl_{k\in\N}({\rho\ke^2}
				{\gamma\ke})\right)^{1/2}  ,&n=2.
			\end{cases} 
		\end{gather*}
		Repeating verbatim the above arguments  we get similar estimate for $\Omega^-$:
		\begin{gather}\label{lemma:prop1:est-}
			\|\J\e f - f \|_{\L(\Omega^-)} \leq C \zeta\e \|f\|_{\HS^1}.
		\end{gather}
	
		It is easy to show, using \eqref{assump:3}, \eqref{eps0:1}, \eqref{eps0:2}, that
		\footnote{In fact, one has even stronger than \eqref{zeta:eta} property $\zeta\e=o(\sup_{k\in\N}\eta\ke)$, but the knowledge of this fact gives us no profits, since the convergence rate $\sup_{k\in\N}\eta\ke$
			appears later in the estimate for $|\a\e[ \J\e f,u]-\a[f,\J'\e u]|$.}
		\begin{gather}
			\label{zeta:eta}
			\zeta\e \leq  C\sup_{k\in\N}\eta\ke\leq C\mu\e.
		\end{gather}
		The  estimate \eqref{mainest:1} follows immediately from \eqref{lemma:prop1:est+}--\eqref{zeta:eta}.	The lemma is proven.
	\end{proof}

	Now, we start the estimation of the difference 
	$\a\e[ \J\e f,u]-\a[f,\J'\e u],$
	where  $f\in\dom(\A)$, $ u\in\dom(\a\e)$.
	Owing to \eqref{supp:phi}--\eqref{supp:psi} we can represent it as follows:
	\begin{align}\label{a:III} 
		\a\e[ \J\e f,u]-\a[f,\J'\e u]=
		I^{1,+}\e+I^{1,-}\e+I^{2,+}\e+I^{2,-}\e+I^{3}\e.
	\end{align}
	Here
	\begin{align*} 
		&I^{1,\pm}\e \ceq \sum_{k\in\N}
		\left( \nabla\big((\la f^\pm\ra_{B\ke^\pm}-f^\pm )\phi\ke^\pm \big), {\nabla u^\pm}\right)_{\L(\wt D\ke^\pm)},\\  
		&I^{2,\pm}\e \ceq\mp
		{1\over 2}\sum_{k\in\N}f\ke
		\left( \nabla (U\ke^\pm\psi\ke^\pm),{\nabla u^\pm}\right)_{\L(B\ke^\pm)},\\
		&I^{3}\e \ceq -
		\left(\gamma [f],[u]\right)_{\L(\Gamma)}  
	\end{align*}
	(as usual, $f^\pm=f\restriction_{\Omega^\pm}$, $u^\pm=u\restriction_{\Omega^\pm}$,
	$[f]=(f^+-f^-)\restriction_\Gamma$,
	$[u]=(u^+-u^-)\restriction_\Gamma$). 
	\smallskip
	
	To proceed further we require  several 
	properties of the function $U\ke$.
	
	\begin{lemma}One has:
		\begin{gather}\label{H:n1}
			{\partial  U^\pm\ke\over \partial\nu^\pm}=0\text{\quad on }S\ke\setminus \overline{D\ke},\\
			\label{H:n2}
			{\partial  U^+\ke \over\partial\nu^+}=
			{\partial  U^-\ke \over\partial\nu^-}\text{\quad on } D\ke,
			\\
			\label{cap:D} 
			\capty({D\ke})=
			2\int_{D\ke} {\partial U^\pm\ke\over\partial\nu^\pm} \d x'.
		\end{gather} 
		where   $\nu^\pm= \mp(0,\dots,0, 1)$ is the  normal to $\Gamma$ pointed outward of $\Omega^\pm$.
		Furthermore, the following pointwise estimates hold true: 
		\begin{gather}\label{H:estimates:1}
			0\leq U\ke(x)\leq 
			\wt\psi\ke(x)\ceq
			{G(|x-x\ke|)- G(\rho\ke) \over G(d\ke) - G(\rho\ke)},\quad x\in \overline{B\ke\setminus\B(d\ke,x\ke)},
			\\\label{H:estimates:2}\forall j\in \{1,\dots,n\}:\quad
			\left|{\partial U\ke\over\partial x_j}(x)\right|\leq C{\rho\ke^{1-n} \over G(d\ke) - G(\rho\ke)},\quad x\in 
			B\ke\setminus \overline{\B({\rho\ke\over 4},x\ke)}.  
		\end{gather} 
		where the function $G(t)$ is defined by \eqref{Gt}.	
	\end{lemma}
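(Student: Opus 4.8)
The plan is to obtain \eqref{H:n1}--\eqref{H:n2} from the reflection symmetry of $U\ke$, then \eqref{cap:D} from Green's formula, and finally the pointwise bounds \eqref{H:estimates:1}--\eqref{H:estimates:2} from the maximum principle and standard interior/boundary elliptic estimates after rescaling. Two preliminary observations are used repeatedly: by interior regularity $U\ke$ is harmonic, hence smooth, in the open set $B\ke\setminus\overline{D\ke}$; and $0\le U\ke\le 1$ in $B\ke$, since $\min\{\max\{U\ke,0\},1\}$ is admissible in \eqref{capty:inf} with no larger energy while the minimiser is unique. For the symmetry: as $x\ke\in\Gamma$, the ball $B\ke$, the set $\overline{D\ke}\subset\Gamma$ and the energy in \eqref{capty:inf} are all invariant under the reflection $R\colon (x',x^n)\mapsto(x',-x^n)$, and since $R$ fixes $\Gamma$ pointwise, $U\ke\circ R$ is again admissible with the same energy, so $U\ke\circ R=U\ke$, i.e. $U\ke$ is even in $x^n$. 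Now $S\ke\setminus\overline{D\ke}$ lies in the interior of $B\ke\setminus\overline{D\ke}$, so $U\ke$ is smooth across $\Gamma$ there and evenness forces $\partial_{x^n}U\ke=0$ on $S\ke\setminus\overline{D\ke}$, which (recall $\nu^\pm=\mp(0,\dots,0,1)$) is \eqref{H:n1}; and on $D\ke$, where the one-sided normal derivatives exist by elliptic regularity up to the flat part away from $\overline{D\ke}\setminus D\ke$, evenness gives $\partial_{x^n}U\ke^-\restr_{x^n=0^-}=-\partial_{x^n}U\ke^+\restr_{x^n=0^+}$, i.e. $\partial U\ke^+/\partial\nu^+=\partial U\ke^-/\partial\nu^-$ on $D\ke$, which is \eqref{H:n2}.

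For \eqref{cap:D}, start from \eqref{capty:eq} and split $\capty(D\ke)=\|\nabla U\ke\|^2_{\L(B\ke^+)}+\|\nabla U\ke\|^2_{\L(B\ke^-)}$. Since $U\ke$ is harmonic in each half-ball $B\ke^\pm$, Green's formula gives $\|\nabla U\ke\|^2_{\L(B\ke^\pm)}=\int_{\partial B\ke^\pm}U\ke\,(\partial U\ke/\partial\nu^\pm)$; on the spherical part of $\partial B\ke^\pm$ one has $U\ke=0$, on $S\ke\setminus\overline{D\ke}$ one has $\partial U\ke/\partial\nu^\pm=0$ by \eqref{H:n1}, and on $D\ke$ one has $U\ke=1$, whence $\|\nabla U\ke\|^2_{\L(B\ke^\pm)}=\int_{D\ke}(\partial U\ke/\partial\nu^\pm)\d x'$. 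Summing over the two half-balls and using \eqref{H:n2} yields \eqref{cap:D}. I expect the main obstacle to be the rigorous justification of this Green's formula: because $D\ke$ is merely relatively open in $\Gamma$, the gradient $\nabla U\ke$ may be singular near the relative boundary $\overline{D\ke}\setminus D\ke$, so both the surface integral $\int_{D\ke}(\partial U\ke/\partial\nu^\pm)\d x'$ and the integration by parts have to be handled by a limiting argument --- e.g. exhausting $B\ke^\pm$ by smooth subdomains that cut out a shrinking tube around $\overline{D\ke}\setminus D\ke$ along a suitable sequence of radii (using $\nabla U\ke\in\L$), or testing the Euler--Lagrange equation of \eqref{capty:inf} against $U\ke-\phi\ke$ with $\phi\ke$ from \eqref{phipsi} and identifying $\int_{B\ke}\nabla U\ke\cdot\nabla\phi\ke$ with the flux of $U\ke$ through $D\ke$.

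The barrier $\wt\psi\ke$ in \eqref{H:estimates:1} is a constant multiple of the fundamental solution of $-\Delta$ centred at $x\ke$ --- which is precisely why $G$ is chosen as in \eqref{Gt} --- so it is harmonic and radially decreasing on $B\ke\setminus\{x\ke\}$, equals $1$ on $\partial\B(d\ke,x\ke)$ and $0$ on $\partial B\ke$, and is non-negative on $\overline{B\ke\setminus\B(d\ke,x\ke)}$. Since $\B(d\ke,x\ke)$ is the smallest ball containing $D\ke$, we have $\overline{D\ke}\subset\overline{\B(d\ke,x\ke)}$, hence $U\ke$ is harmonic on the annulus $B\ke\setminus\overline{\B(d\ke,x\ke)}$; comparing $U\ke$ with $\wt\psi\ke$ there by the weak maximum principle --- using $U\ke\le 1=\wt\psi\ke$ on $\partial\B(d\ke,x\ke)$ and $U\ke=0=\wt\psi\ke$ on $\partial B\ke$, with a routine approximation from inside near $\partial\B(d\ke,x\ke)$ to cover the possible contact of $\overline{D\ke}$ with that sphere --- gives $U\ke\le\wt\psi\ke$, which together with $U\ke\ge 0$ is \eqref{H:estimates:1}.

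Finally, for \eqref{H:estimates:2} rescale via \eqref{variables}: $\wh U(y)\ceq U\ke(x\ke+\rho\ke y)$ is harmonic in $\{|y|<1\}\setminus\overline{\B(d\ke/\rho\ke,0)}$, which contains $\{1/8<|y|<1\}$ because $d\ke\le\rho\ke/8$ by \eqref{eps0:2}, and $\wh U=0$ on $\{|y|=1\}$. On $\{1/8\le|y|\le1\}$ the bound \eqref{H:estimates:1} reads $0\le\wh U(y)\le C_n\,\rho\ke^{2-n}\,(G(d\ke)-G(\rho\ke))^{-1}$ (with the convention $\rho\ke^{2-n}=1$ when $n=2$), since $\sup_{1/8\le|y|\le1}|G(\rho\ke|y|)-G(\rho\ke)|$ equals $\rho\ke^{2-n}(8^{n-2}-1)$ for $n\ge3$ and $\ln 8$ for $n=2$. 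Interior gradient estimates for harmonic functions on balls of radius comparable to $1$ (for the part $\{1/4\le|y|\le3/4\}$), together with boundary gradient estimates near the smooth sphere $\{|y|=1\}$ where $\wh U$ vanishes (e.g. by odd reflection across $\{|y|=1\}$ followed by interior estimates), bound $|\nabla\wh U|$ on $\{1/4\le|y|\le1\}$ by $C_n$ times this $L^\infty$ quantity; undoing the scaling, $|\partial U\ke/\partial x_j|=\rho\ke^{-1}|\partial\wh U/\partial y_j|\le C_n\,\rho\ke^{1-n}\,(G(d\ke)-G(\rho\ke))^{-1}$ on $B\ke\setminus\overline{\B(\rho\ke/4,x\ke)}$, which is \eqref{H:estimates:2}.
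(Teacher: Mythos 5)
Your proof is correct, and the overall architecture matches the paper's: reflection symmetry gives \eqref{H:n1}--\eqref{H:n2}, Green's identity gives \eqref{cap:D}, and comparison with $\wt\psi\ke$ via the maximum principle gives \eqref{H:estimates:1}. You are in places more careful than the paper --- you obtain $0\le U\ke\le 1$ by truncation in the variational problem and derive symmetry from invariance of the energy rather than simply asserting it, and you flag the genuine technical issue that $\nabla U\ke$ can be singular near $\overline{D\ke}\setminus D\ke$ (the paper invokes ``Green's identity'' without comment); of your two suggested fixes, testing the Euler--Lagrange relation against $U\ke-\phi\ke$ is the cleanest, since it gives $\capty(D\ke)=\int\nabla U\ke\cdot\nabla\phi\ke$ directly, with $\phi\ke\equiv 1$ on $D\ke$ localising the remaining boundary term. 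The one genuine difference of route is \eqref{H:estimates:2}: the paper writes the (harmonic) partial derivative as a mean value over a judiciously chosen ball $\B(\ell_x\rho\ke,x)$, integrates by parts, and then verifies uniform boundedness of the resulting factor $\vartheta_x$ over the parameter range; you instead rescale to the unit annulus $\{1/8\le|y|\le 1\}$, bound $\wh U$ in $\LL^\infty$ there by \eqref{H:estimates:1}, and invoke interior gradient estimates plus a boundary gradient estimate at $|y|=1$. Your route is arguably tidier since it sidesteps the ad hoc $\vartheta_x$ computation, and the required scaling of the constant is built in. One small imprecision: ``odd reflection across $\{|y|=1\}$'' does not literally produce a harmonic extension through a sphere --- you want the Kelvin transform $y\mapsto y/|y|^2$ (which preserves harmonicity and antisymmetrises about the sphere where $\wh U$ vanishes), or simply cite standard boundary Schauder/$W^{2,p}$ estimates for harmonic functions vanishing on a smooth boundary piece. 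With that correction your argument closes.
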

	
	\begin{proof}
		Standard   regularity theory for elliptic PDEs yields
		\begin{gather}
			\label{H:regul}
			U\ke\in  {C}^\infty(B\ke\setminus\overline{D\ke}).
		\end{gather}
		Evidently, $U\ke$ is symmetric 
		with respect to the hyperplane $\Gamma$,
		i.e.
		\begin{gather}
			\label{H:symm}
			U^+\ke(x', \tau)=U\ke^-(x', -\tau),\ \tau>0.
		\end{gather} 
		Then \eqref{H:n1}--\eqref{H:n2} follow immediately from \eqref{H:regul} and \eqref{H:symm}.
		Furthermore, since $U\ke$ is the solution to the problem  \eqref{BVP:cap}, one has the following Green's identity:
		\begin{gather}\label{Green}
			\int_{D\ke}\left({\partial U^+\ke\over\partial\nu^+}+{\partial U^-\ke\over \partial\nu^-}\right)\d x'=
			\|\nabla U\ke\|^2_{\L(B\ke)}.
		\end{gather}
		From \eqref{capty:eq}, \eqref{H:n2} and \eqref{Green}, we infer \eqref{cap:D}.
		
		Now, we proceed to the proof of \eqref{H:estimates:1}.
		By the maximum principle  
		\begin{gather}\label{U01}
			0\le U\ke \leq 1. 
		\end{gather}
		Furthermore, the function $\wt\psi\ke$  is harmonic in $B\ke\setminus\overline{\B(d\ke,x\ke)}$,  it is equal to $0$ on $\partial B\ke$, and it is equal to $1$ on  $\partial \B(d\ke,x\ke)$. Hence, the function 
		$\wt\psi\ke-U\ke$ is harmonic in $B\ke\setminus\overline{\B(d\ke,x\ke)}$, equal to $0$ on 
		$\partial B\ke$ and is non-negative on $\partial \B(d\ke,x\ke)$.
		Again applying the maximum principle, we conclude that 
		\begin{gather}\label{wtpsi:U}
			\wt\psi\ke-U\ke\geq 0\text{ in }\overline{B\ke\setminus\B(d\ke,x\ke)}.
		\end{gather}
		The estimate \eqref{H:estimates:1} follows from \eqref{U01} and \eqref{wtpsi:U}.
		
		Finally, we prove \eqref{H:estimates:2}. 
		Let $x\in  B\ke\setminus \overline{\B({\rho\ke\over 4},x\ke)}$.
		We denote $\tau_x\ceq \rho\ke^{-1}|x-x\ke|$ (that is $\tau_x\in (1/4,1)$),
		and $\ell_x\ceq \min\{{1\over 8};\,1-\tau_x\}$.
		Due to \eqref{eps0:2} one has
		$$\B(\ell_x\rho\ke,x)\subset B\ke\setminus\overline{\B(d\ke,x\ke)}.$$
		Since $U\ke$ is a harmonic function, its partial derivatives are harmonic functions too.
		Then, using the mean value theorem 
		for harmonic functions and then   integrating by parts, we get
		\begin{gather}\label{mvt}
			{\partial U\ke\over\partial x^j}(x)= 
			{1\over \vol(\B(\ell_x\rho\ke,x))}\int_{ \B(\ell_x\rho\ke,x)}{\partial U\ke\over\partial x^j}(y) \d y
			={1\over \vol(\B(\ell_x\rho\ke,x))}\int_{\partial \B(\ell_x\rho\ke,x)}u(y)\nu_j(y) \d s_y,
		\end{gather}
		where 
		$\d s_y$ is a element of integration on the sphere ${\partial \B(\ell_x\rho\ke,x)}$,  
		$\nu_j(y)$ is the $j$-th component of the outward pointing unit normal on $\partial \B(\ell_x\rho\ke,x)$ (thus $|\nu_j(y)|\leq 1$).
		Using \eqref{H:estimates:2} we infer from  \eqref{mvt}:
		\begin{align*}
			\left|{\partial U\ke\over \partial x^j}(x)\right| 
			&\leq C(\ell_x\rho\ke)^{-1}(G(d\ke)-G(\rho\ke))^{-1}\max_{y\in \partial \B(\ell_x,x)}(G(|y-x\ke|)-G(\rho\ke))
			\\
			&=C\rho\ke^{1-n}(G(d\ke)-G(\rho\ke))^{-1}\vartheta_x,
		\end{align*}
		where 
		$$
		\vartheta_x\ceq 
		\begin{cases}
			\left((\tau_x-\ell_x)^{2-n}-1\right)\ell_x^{-1},& n\ge 3,
			\\-\ln(\tau_x-\ell_x)\ell_x^{-1},
			& n=2.
		\end{cases}
		$$
		(here we use the fact that   $\max_{y\in \partial \B(\ell_x\rho\ke,x)}G(|y-x\ke|)$ is attained 
		at the point $y_0\in {\partial \B(\ell_x\rho\ke,x)}$
		lying on the interval between $x$ and $x\ke$, that is $|y_0-x\ke|=(\tau_x-\ell_x)\rho\ke$).
		It is easy to show that   $\vartheta_x$ is uniformly bounded
		on $\{ (\tau_x,\ell_x):\ \tau_x\in (1/4,1),\ \ell_x=\min\{{1\over 8};\,1-\tau_x\}\}$.
		The estimate \eqref{H:estimates:2} is proven.
	\end{proof}
	
	Now, we can further transform the terms $I\ke^{2,\pm}$. Integrating by parts twice and taking into account \eqref{H:n1}, \eqref{cap:D} and the properties
	\begin{gather*}
		\psi\ke=1\text{ in a neighborhood of }D\ke,\quad
		\psi\ke=0\text{ in a neighborhood of }\partial B\ke,\quad
		{\partial \psi\ke^\pm\over\partial\nu^\pm}=0\text{ on }\Gamma,
	\end{gather*}
	we get:
	\begin{align*}\notag
		I\e^{2,\pm}
		&=
		\pm{1\over 2}\sum_{k\in\N}f\ke
		(\Delta(U\ke^\pm\psi\ke^\pm),  u^\pm  )_{\L(B\ke^\pm)}+Q^{1,\pm}\e
		\\\notag
		&=
		\pm{1\over 2}\sum_{k\in\N}f\ke
		(\Delta(U\ke^\pm\psi\ke^\pm), \la u^\pm\ra_{B\ke^\pm} )_{\L(B\ke^\pm)}+Q\e^{1,\pm}+Q^{2,\pm}\e
		\\\notag
		&=
		\pm{1\over 2}\sum_{k\in\N}f\ke\overline{\la u^\pm\ra_{B\ke^\pm}} \left( \int_{D\ke }
		{\partial U\ke^\pm\over \partial\nu^\pm} \d x'\right)+Q\e^{1,\pm}+Q^{2,\pm}\e
		\\ \notag
		&=
		\pm  {1\over 4}\sum_{k\in\N}f\ke\overline{\la u^\pm\ra_{B\ke^\pm}} \capty(D\ke)+Q\e^{1,\pm}+Q^{2,\pm}\e
		\\
		&=
		Q\e^{1,\pm}+Q^{2,\pm}\e
		+Q^{3,\pm}\e	+Q^{4,\pm}\e ,
	\end{align*} 
	where  
	\begin{align*}
		Q\e^{1,\pm}&\ceq \mp{1\over 2}\sum_{k\in\N}f\ke \int_{D\ke}
		{\partial U\ke^\pm\over\partial\nu^\pm}\overline{u^\pm} \d x',
		\\
		Q^{2,\pm}\e&\ceq \pm{1\over 2}\sum_{k\in\N}f\ke
		(\Delta(U\ke^\pm\psi\ke^\pm),{u^\pm-\la u^\pm \ra_{B\ke^\pm} })_{\L(B\ke^\pm)},\\
		Q^{3,\pm}\e&\ceq   
		\pm  {1\over 4}\sum_{k\in\N}\left(
		f\ke\overline{\la u^\pm\ra_{B\ke^\pm}} -
		\la [f]\ra_{S\ke}\overline{\la u^\pm\ra_{S\ke }}\right) \capty(D\ke),\\
		Q^{4,\pm}\e&\ceq \pm  {1\over 4}\sum_{k\in\N}\la [f]\ra_{S\ke}\overline{\la u^\pm\ra_{S\ke }} \capty(D\ke)
	\end{align*}
	Denoting
	$$I^{4}\e\ceq Q^{4,+}\e+ Q^{4,-}\e+ I^3\e,$$
	 we can rewrite \eqref{a:III}  as follows:
	\begin{gather}\label{aa:repres}
		\a\e[\J\e f,u]-\a[f,\J'\e u] =
		I^{1}\e+
		\sum_{k=1}^3\left(Q^{k,+}\e+ Q^{k,-}\e\right)
		+I^{4}\e.
	\end{gather}
	
	By virtue of \eqref{H:n2} and the fact that $u$ is continuous across $D\ke$, we immediately get
	\begin{gather}
		\label{Q1pm}
		Q\e^{1,+}+Q\e^{1,-}=0.
	\end{gather}
	In Lemmata~\ref{lemma:I1}--\ref{lemma:I4} below we estimate the other terms in the right-hand-side of \eqref{aa:repres}. Recall that  $\eta\ke$ is defined by \eqref{etake}.
	
	\begin{lemma}\label{lemma:I1}
		One has
		\begin{gather}\label{I1:est}
			|I_{\eps}^{1,\pm}|\leq C\sup_{k\in\N}\eta\ke\|f\|_{\HS^2}\|u\|_{\HS\e^1} .
		\end{gather}
	\end{lemma}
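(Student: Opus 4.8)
The plan is to differentiate inside $I_\eps^{1,\pm}$ by the Leibniz rule and split it into a \emph{main} part, where the derivative hits the cut-off $\phi\ke^\pm$, and a \emph{remainder}, where it hits $f^\pm$; the essential use of the hypothesis $f\in\HS^2$ (via the $\H^2$-regularity of Lemma~\ref{lemma:H2}) will occur in the main part. Concretely, for each $k$ one writes
\[
\nabla\big((\la f^\pm\ra_{B\ke^\pm}-f^\pm)\phi\ke^\pm\big)=\big(\la f^\pm\ra_{B\ke^\pm}-f^\pm\big)\,\nabla\phi\ke^\pm-\phi\ke^\pm\,\nabla f^\pm .
\]
Then the Cauchy--Schwarz inequality, first in the $\L(\wt D\ke^\pm)$ inner product and then in the summation over $k$, together with the disjointness of the sets $\wt D\ke^\pm\subset\Omega^\pm$ (cf.~\eqref{supp:phi}), gives
\[
|I_\eps^{1,\pm}|\le\Big(\suml_{k\in\N}\big\|\nabla\big((\la f^\pm\ra_{B\ke^\pm}-f^\pm)\phi\ke^\pm\big)\big\|^2_{\L(\wt D\ke^\pm)}\Big)^{1/2}\|\nabla u^\pm\|_{\L(\Omega^\pm)},
\]
and $\|\nabla u^\pm\|_{\L(\Omega^\pm)}\le\|u\|_{\HS\e^1}$. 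Hence it suffices to bound each of $\suml_{k\in\N}\|\phi\ke^\pm\nabla f^\pm\|^2_{\L(\wt D\ke^\pm)}$ and $\suml_{k\in\N}\|(\la f^\pm\ra_{B\ke^\pm}-f^\pm)\nabla\phi\ke^\pm\|^2_{\L(\wt D\ke^\pm)}$ by $C\sup_{k\in\N}\eta\ke^2\,\|f\|^2_{\HS^2}$.

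For the remainder, since $0\le\phi\ke^\pm\le1$ and $f^\pm\in\H^2(O^\pm)$ with $B\ke^\pm\subset O^\pm$ (Lemma~\ref{lemma:H2}), I would apply Lemma~\ref{lemma:Fest} to each partial derivative of $f^\pm$; summing over the derivatives and over $k$, using the disjointness of the balls $B\ke^\pm$ and the bounds $\|\nabla f^\pm\|_{\L(\Omega^\pm)}\le\|f\|_{\HS^2}$, $\|f^\pm\|_{\H^2(O^\pm)}\le C\|f\|_{\HS^2}$ from \eqref{H2est} and \eqref{HHH}, the whole sum is controlled by $C\big(\sup_k(d\ke/\rho\ke)^n+\sup_k d\ke^2\big)\|f\|^2_{\HS^2}$ for $n\ge3$, and by the analogous expression with $\sup_k(d\ke/\rho\ke)$ and $\sup_k\big(d\ke\rho\ke|\ln(d\ke\rho\ke)|\big)$ for $n=2$. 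The required bound then follows from the elementary inequalities $(d\ke/\rho\ke)^n\le C\eta\ke^2$, $d\ke^2\le C\eta\ke^2$ for $n\ge3$, and $d\ke/\rho\ke\le C\eta\ke^2$, $d\ke\rho\ke|\ln(d\ke\rho\ke)|\le C\eta\ke^2$ for $n=2$, all of which hold because $d\ke\le\rho\ke/8<1$ (see \eqref{eps0:2}) and $\sup_k(d\ke/\rho\ke)\to0$.

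The main part is handled by H\"older's inequality with exponents $(p,q)$, $\tfrac{1}{p}+\tfrac{1}{q}=\tfrac{1}{2}$: for each $k$,
\[
\big\|(\la f^\pm\ra_{B\ke^\pm}-f^\pm)\nabla\phi\ke^\pm\big\|_{\L(\wt D\ke^\pm)}\le\|f^\pm-\la f^\pm\ra_{B\ke^\pm}\|_{\LL^p(\wt D\ke^\pm)}\,\|\nabla\phi\ke^\pm\|_{\LL^q(\wt D\ke^\pm)} .
\]
The first factor I would estimate by $C_{n,p}\,\rho\ke^{n/p+(2-n)/2}\|f^\pm\|_{\H^2(B\ke^\pm)}$ using Lemma~\ref{lemma:sobolev} (note $\int_{B\ke^\pm}(f^\pm-\la f^\pm\ra_{B\ke^\pm})\d x=0$ and, for the lower-order term, \eqref{Poincare}). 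The second factor I would compute directly from \eqref{phipsi} and \eqref{Gt}: on the annulus $d\ke<|x-x\ke|<\wt d\ke$ one has $|\nabla\phi\ke(x)|\le C\,|G'(|x-x\ke|)|\,|G(d\ke)-G(\wt d\ke)|^{-1}$, which, recalling $\wt d\ke=2d\ke$ for $n\ge3$ and $\wt d\ke=(\rho\ke d\ke)^{1/2}$ for $n=2$ (see \eqref{wtdke}), yields $\|\nabla\phi\ke^\pm\|_{\LL^q(\wt D\ke^\pm)}\le C\,d\ke^{n/q-1}$ for $n\ge3$ and $\|\nabla\phi\ke^\pm\|_{\L(\wt D\ke^\pm)}\le C\,|\ln(d\ke/\rho\ke)|^{-1/2}$ for $n=2$. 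Multiplying, the powers of $\rho\ke$ and $d\ke$ combine to $(d\ke/\rho\ke)^{\,n/2-n/p-1}$, and one optimizes over the admissible exponents of \eqref{p}: for $n\ge5$ take $p=\tfrac{2n}{n-4}$, giving exponent $1$; for $n=3$ take $p=\infty$, $q=2$, giving exponent $\tfrac12$, while for $n=2$ the separate computation above gives $|\ln(d\ke/\rho\ke)|^{-1/2}$; and for $n=4$ take $p$ finite of order $|\ln(d\ke/\rho\ke)|$, invoke $C_{4,p}\le Cp$ from \eqref{lemma:c4p:est}, and minimize $p\,(d\ke/\rho\ke)^{1-4/p}$, which produces $(d\ke/\rho\ke)|\ln(d\ke/\rho\ke)|$. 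In every dimension the resulting factor equals $\eta\ke$ (cf.~\eqref{etake}), so $\|(\la f^\pm\ra_{B\ke^\pm}-f^\pm)\nabla\phi\ke^\pm\|_{\L(\wt D\ke^\pm)}\le C\eta\ke\|f^\pm\|_{\H^2(B\ke^\pm)}$; squaring, summing over $k$ (disjointness of $B\ke^\pm\subset O^\pm$), and invoking \eqref{H2est} completes the bound, and combining it with the estimate for the remainder gives \eqref{I1:est}.

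I expect the borderline dimension $n=4$ — the logarithmic optimization of the Sobolev exponent together with the careful use of the constant bound \eqref{lemma:c4p:est} — to be the only genuinely delicate point; everything else is bookkeeping with the scaled Poincar\'e/trace/Sobolev inequalities of Subsection~\ref{subsec:3:1} and the $\H^2$-regularity of $f$. It is worth stressing \emph{why} $\|f\|_{\HS^2}$ rather than $\|f\|_{\HS^1}$ must appear on the right-hand side: controlling $\|f^\pm-\la f^\pm\ra_{B\ke^\pm}\|_{\LL^p(\wt D\ke^\pm)}$ for $p$ large enough to absorb the singularity of $\nabla\phi\ke$ genuinely requires the second derivatives of $f$.
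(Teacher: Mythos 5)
Your proposal is correct and follows essentially the same route as the paper's proof: the Leibniz split followed by Cauchy--Schwarz over the disjoint sets, Lemma~\ref{lemma:Fest} applied componentwise to $\nabla f^\pm$ for the $\phi\ke^\pm\nabla f^\pm$ term, and H\"older with the choice of exponents \eqref{pmax}--\eqref{qmax}, Lemma~\ref{lemma:sobolev} with \eqref{lemma:c4p:est}, and a direct computation of $\|\nabla\phi\ke^\pm\|_{\LL^q}$ for the $(f^\pm-\la f^\pm\ra_{B\ke^\pm})\nabla\phi\ke^\pm$ term, all summed via \eqref{H2est}. The intermediate bookkeeping (the factor $(d\ke/\rho\ke)^{n/2-n/p-1}$, the $n=4$ logarithmic choice of $p$, the elementary comparisons $(d\ke/\rho\ke)^n\le C\eta\ke^2$, etc.) checks out and matches the paper's quantities $\wt\eta\ke$ and \eqref{wtphi-est}.
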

	
	\begin{proof}
		Using $|\phi^\pm\ke|\leq 1$  and the Cauchy-Schwarz inequality, we get 
		\begin{align}  
			|I_{\eps}^{1,\pm}|&=
			\left|\suml_{k\in\N}\left(-(\phi\ke^\pm \nabla f^\pm ,\nabla u^\pm)_{\L(\wt D\ke^\pm)} + 
			((\la f^\pm\ra_{B^\pm\ke}-f^\pm ) \nabla \phi\ke^\pm,\nabla u^\pm)_{\L(\wt D\ke^\pm)}\right)\right| \notag \\ 
			&\leq 
			\left\{ 
			\Bigl( {\suml_{k\in \N}\|\nabla f^\pm \|_{\L(\wt D\ke^\pm)}^2\Bigr)^{1/2}}  + \Bigl(
			{\suml_{k\in \N}\|( f^\pm - \la f^\pm\ra_{B^\pm\ke})\nabla \phi\ke^\pm\|_{\L(\wt D\ke^\pm)}^2\Bigr)^{1/2}} \right\}\|u\|_{\HS\e^1}.\label{I1:1}
		\end{align} 
		
		Using   Lemma~\ref{lemma:Fest},
		we obtain
		\begin{gather}\label{I1:2} 
			\|\nabla f \| _{\L(\wt D\ke^\pm)}\leq  
			C\wt\eta\ke\|f^\pm\| _{\H^2(B\ke^\pm)} ,
		\end{gather}
		where $\wt\eta\ke$ is defined by
		\begin{gather*}
			\wt\eta\ke\ceq 
			\begin{cases}
				\max\left\{d\ke^{n/2}\rho\ke^{-n/2};\,d\ke \right\},&n\ge 3,\\[2mm]
				\max\left\{d\ke^{1/2} \rho\ke^{-1/2};\, (d\ke\rho\ke|\ln (d\ke\rho\ke)|)^{1/2}\right\},&n=2.
			\end{cases}
		\end{gather*}
		It is easy to see that $\wt\eta\ke\leq \eta\ke$, whence,
        using \eqref{I1:2}, \eqref{assump:1},  
		$B\ke^\pm\subset O^\pm$ (this follows from \eqref{eps0:1+}) and Lemma~\ref{lemma:H2}, we get
		\begin{gather}\label{I1:3}
			\left(\suml_{k\in\N}\|\nabla f^\pm\|^2_{\L(\wt D\ke^\pm)}\right)^{1/2}\leq 
			C\sup_{k\in\N}\wt\eta\ke\|f \|_{\H^2(O^\pm)}\leq 
			C_1 \sup_{k\in\N} \eta\ke
			\| f\|_{\HS^2}.
		\end{gather}
		
		To estimate the second term in the right-hand-side of \eqref{I1:1},
		we define the numbers $p,\, q$ via
		\begin{align}\label{pmax}
			&p=\frac {2n}{n-4} \text{\, if }n\geq 5,\quad
			&&p=2|\ln d\ke -\ln \rho\ke |\text{\, if }n=4,\quad
			&&p=\infty\text{\, if }n=2,3,\\\label{qmax}
			&q={n\over 2}\text{\, if }n\geq 5,\quad 
			&&q={2\over 1-|\ln d\ke -\ln \rho\ke |^{-1}}\text{\, if }n=4,\quad 
			&&q=2\text{\, if }n=2,3.
		\end{align}
		Note that, due to \eqref{eps0:2}, we have $|\ln d\ke -\ln \rho\ke |\le  \ln 8>1$.
		It is easy to see that $p,q\in[2,\infty]$ and ${1\over p}+ {1\over q}={1\over 2}$. Then, 
		by virtue of the H\"older inequality, we get
		\begin{align} \notag
			\|(f^\pm -\la f^\pm\ra_{B^\pm\ke})\nabla  \phi\ke^\pm\|^2_{\L(\wt D\ke^\pm)}&\leq 
			\|(f^\pm -\la f^\pm\ra_{B^\pm\ke})\nabla  \phi\ke^\pm\|^2_{\L(B\ke^\pm)}\\&\leq 
			\|f^\pm - \la f^\pm\ra_{B^\pm\ke}\|^2_{\LL^p(B\ke^\pm)} \|\nabla  \phi\ke^\pm\|^2_{\LL^q(B\ke^\pm)}.\label{Hoelder}
		\end{align}  
		Using Lemma~\ref{lemma:sobolev} for $v\ceq f^\pm -\la f^\pm\ra_{B^\pm\ke}$ and $p$ as in \eqref{pmax} and taking into account \eqref{lemma:c4p:est}, we obtain the estimate  
		\begin{gather}\label{pmax+}
			\|f^\pm -\la f^\pm\ra_{B^\pm\ke}\|_{\LL^p(B^\pm\ke)}\leq C
			\|f^\pm \|_{\mathsf{H}^2(B^\pm\ke)}
			\begin{cases}
				\rho\ke^{-1}&n\geq 5,\\
				|\ln d\ke - \ln \rho\ke |\cdot\rho\ke^{2|\ln d\ke - \ln \rho\ke |^{-1}-1}&n= 4,\\
				\rho\ke^{-1/2}&n= 3,\\
				1&n= 2.\\
			\end{cases}
		\end{gather} 
		For $q$ as in \eqref{qmax} we get via  straightforward calculations:
		\begin{gather}
			\label{wtphi-est}
			\|\nabla \phi\ke^\pm\|_{\LL^q(B\ke^\pm)}\leq 
			C\begin{cases}
				d\ke,&n\ge 5,\\
				d\ke^{1-2|\ln {d\ke}-\ln{\rho\ke}|^{-1}},&n=4,\\
				d\ke^{1/2},&n=3,\\
				|\ln {d\ke}-\ln{\rho\ke}|^{-1/2},&n=2.
			\end{cases} 
		\end{gather} 
		Combining \eqref{Hoelder}--\eqref{wtphi-est} and taking into account that 
		$\left({d\ke\over\rho\ke}\right)^{-2|\ln d\ke - \ln\rho\ke|^{-1}}=\exp(2)$,
		we get
		\begin{gather}\hspace{-2mm}
			\|(f^\pm-\la f^\pm\ra_{B^\pm\ke})\nabla \phi\ke^\pm\|_{\L(\wt D\ke^\pm)}\leq  C \eta\ke
			\|f^\pm\|_{\H^2(B\ke^\pm)},
			\label{I1:4}
		\end{gather} 
		where $\eta\ke$ is given by \eqref{etake}.
		Taking into account \eqref{H2est}, we deduce from \eqref{I1:4}:
		\begin{gather}\label{I1:5}
			\left(\suml_{k\in\N}\|(f^\pm-\la f^\pm\ra_{B^\pm\ke})\nabla \phi\ke^\pm\|_{\L(\wt D\ke^\pm)}^2\right)^{1/2}\leq  C \sup_{k\in\N}\eta\ke
			\|f\|_{\HS^2}.
		\end{gather}   
		Combining \eqref{I1:1}, \eqref{I1:3}, \eqref{I1:5},  
		we arrive at the required estimate \eqref{I1:est}.
		The lemma is proven.
	\end{proof}

	\begin{lemma}\label{lemma:Q2}
		One has
		\begin{gather}\label{Q2:est}
			|Q^{2,\pm}\e|\le C\sup_{k\in\N}(\rho\ke^{1/2}\gamma\ke\varkappa\ke)\|f\|_{\HS^1}\|u\|_{\HS^1\e},
		\end{gather}
		where
		\begin{gather*} 
			\varkappa\ke\ceq
			\begin{cases}
				1,&n\ge 3,\\
				|\ln \rho\ke|,&n=2.
			\end{cases}
		\end{gather*}
	\end{lemma}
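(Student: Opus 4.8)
The plan is to estimate $|Q^{2,\pm}\e|$ by applying the Cauchy--Schwarz inequality twice --- once inside each summand over $k$, once in the sum over $k$ --- combining this with the Poincar\'e inequality \eqref{Poincare} for $u^\pm$, the bound \eqref{lemma:prop1:3} together with the trace inequality \eqref{trace:standard} for $f\ke=\la f^+\ra_{B\ke^+}-\la f^-\ra_{B\ke^-}$, and the pointwise estimates \eqref{H:estimates:1}--\eqref{H:estimates:2} for $U\ke$.

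First, in the $k$-th summand I would apply Cauchy--Schwarz in the $\L(B\ke^\pm)$-scalar product and then \eqref{Poincare}, obtaining
\begin{gather*}
\Bigl|\bigl(\Delta(U\ke^\pm\psi\ke^\pm),\,u^\pm-\la u^\pm\ra_{B\ke^\pm}\bigr)_{\L(B\ke^\pm)}\Bigr|\le
C\rho\ke\,\|\Delta(U\ke^\pm\psi\ke^\pm)\|_{\L(B\ke^\pm)}\,\|\nabla u^\pm\|_{\L(B\ke^\pm)} .
\end{gather*}
The key point is then the estimate of $\|\Delta(U\ke^\pm\psi\ke^\pm)\|_{\L(B\ke^\pm)}$. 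Since $U\ke^\pm$ is harmonic in $B\ke^\pm$ (note $B\ke^\pm\subset B\ke\setminus\overline{D\ke}$) and $\psi\ke\equiv1$ near $\overline{D\ke}$, one has $\Delta(U\ke^\pm\psi\ke^\pm)=2\nabla\psi\ke^\pm\cdot\nabla U\ke^\pm+U\ke^\pm\Delta\psi\ke^\pm$, which, within $B\ke^\pm$, is supported in the thin annulus $A\ke^\pm\ceq\{x:\rho\ke/4\le|x-x\ke|\le\rho\ke/2\}\cap\Omega^\pm$ of volume $\mathcal O(\rho\ke^n)$. On $A\ke^\pm$ one has $|\nabla\psi\ke|\le C\rho\ke^{-1}$ and $|\Delta\psi\ke|\le C\rho\ke^{-2}$ (by \eqref{phipsi}), and both \eqref{H:estimates:1} and \eqref{H:estimates:2} apply; feeding in the definition \eqref{Gt} of $G$, the definition of $\gamma\ke$ and the inequality $d\ke\le\rho\ke/8$ from \eqref{eps0:2}, a short computation yields
\begin{gather*}
\|\Delta(U\ke^\pm\psi\ke^\pm)\|_{\L(B\ke^\pm)}\le C\,\rho\ke^{n/2-1}\gamma\ke\varkappa\ke ,
\end{gather*}
where the factor $\varkappa\ke$, equal to $1$ for $n\ge3$, records the logarithmic loss incurred, when $n=2$, in bounding $G(d\ke)-G(\rho\ke)=\ln(\rho\ke/d\ke)$ from below in terms of $\gamma\ke$.

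Next I would estimate $f\ke$: from \eqref{lemma:prop1:3} one has $\rho\ke^{n-1}|\la f^\pm\ra_{B\ke^\pm}|^2\le C\bigl(\|f^\pm\|^2_{\L(S\ke)}+\|\nabla f^\pm\|^2_{\L(B\ke^\pm)}\bigr)$, hence $\rho\ke^{n-1}|f\ke|^2\le C\bigl(\|f^+\|^2_{\L(S\ke)}+\|f^-\|^2_{\L(S\ke)}+\|\nabla f^+\|^2_{\L(B\ke^+)}+\|\nabla f^-\|^2_{\L(B\ke^-)}\bigr)$, and summing over $k$, using the disjointness \eqref{assump:1} of the balls $B\ke$ and the trace inequality \eqref{trace:standard}, we get $\sum_{k\in\N}\rho\ke^{n-1}|f\ke|^2\le C\|f\|^2_{\HS^1}$. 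Finally, inserting the two displayed bounds into the definition of $Q^{2,\pm}\e$, writing $\rho\ke^{n/2}=\rho\ke^{(n-1)/2}\,\rho\ke^{1/2}$ to factor out $\sup_{k}(\rho\ke^{1/2}\gamma\ke\varkappa\ke)$, and applying Cauchy--Schwarz in $k$ to the remaining factors $\rho\ke^{(n-1)/2}|f\ke|$ and $\|\nabla u^\pm\|_{\L(B\ke^\pm)}$, whose squares sum to $\le C\|f\|^2_{\HS^1}$ and to $\le\|\nabla u^\pm\|^2_{\L(\Omega^\pm)}\le\|u\|^2_{\HS^1\e}$ (again by \eqref{assump:1}) respectively, one arrives at \eqref{Q2:est}. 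The step I expect to be the main obstacle is the estimate of $\|\Delta(U\ke^\pm\psi\ke^\pm)\|_{\L(B\ke^\pm)}$: the global capacity bound \eqref{capty:bound} is too crude here, so one must exploit the pointwise control of $U\ke$ and $\nabla U\ke$ on the transition annulus given by \eqref{H:estimates:1}--\eqref{H:estimates:2} and keep careful track of the powers of $\rho\ke$, $d\ke$ and $\gamma\ke$ --- it is precisely in this bookkeeping that the planar logarithm $\varkappa\ke$ enters.
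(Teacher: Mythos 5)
Your proposal is correct and is essentially the same argument as the paper's: both hinge on the product rule $\Delta(U\ke^\pm\psi\ke^\pm)=2\nabla U\ke^\pm\cdot\nabla\psi\ke^\pm+U\ke^\pm\Delta\psi\ke^\pm$, the localization to the transition annulus, the pointwise bounds \eqref{H:estimates:1}--\eqref{H:estimates:2} yielding $\|\Delta(U\ke^\pm\psi\ke^\pm)\|_{\L(B\ke^\pm)}\leq C\rho\ke^{n/2-1}\gamma\ke\varkappa\ke$, the Poincar\'e inequality \eqref{Poincare} for $u^\pm$, and the mean-value/trace bound for $|f\ke|$ via \eqref{lemma:prop1:3} and \eqref{trace:standard}. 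The only superficial difference is in the ordering: you apply the $\L$ Cauchy--Schwarz and Poincar\'e termwise first and then a discrete Cauchy--Schwarz over $k$, whereas the paper performs a weighted discrete Cauchy--Schwarz first and inserts Poincar\'e inside the second factor; the estimate obtained is identical.
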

	
	\begin{proof}
		Using the Cauchy-Schwarz inequality, we get
		\begin{align}\notag\hspace{-5mm}
			|Q^{2,+}\e|\le 
			\left(\sum_{k\in\N}\rho\ke\gamma\ke^{-2}\varkappa\ke^{-2}|f\ke|^2  
			\|\Delta (U\ke^+\psi\ke^+)\|^2_{\L(B\ke^+)}\right)^{1/2}&\\\times
			\left( \sum_{k\in\N}\rho\ke^{-1}\gamma\ke^{2}\varkappa\ke^{2}\|u^+-
			\la u^+\ra_{B\ke^+}\|^2_{\L(B\ke^+)}\right)^{1/2}&.\label{lemma:Q2:1}
		\end{align} 
		Since
		$\Delta U\ke=0$, we have
		\begin{gather}
			\label{AHphi}
			\Delta(U\ke^+\psi\ke^+)=2(\nabla U\ke^+,\nabla\psi\ke^+)+U\ke^+\Delta \psi\ke^+.    
		\end{gather}
		Furthermore,  one has
		\begin{gather}\label{supp:nablapsi}
			\supp(\nabla \psi\ke^\pm)\cup\supp(\Delta \psi\ke^\pm) \subset 
			\overline{B\ke^\pm\setminus\B({\rho\ke\over 4},x\ke)},\\
			\label{psi:est}
			|\psi\ke^\pm (x)|\leq 1,\quad |\nabla \psi\ke^\pm (x)|\leq C\rho\ke^{-1},\quad 
			|\Delta \psi\ke^\pm (x)|\leq C\rho\ke^{-2}.
		\end{gather}
		Then, using \eqref{H:estimates:1}--\eqref{H:estimates:2} and \eqref{AHphi}--\eqref{psi:est},
		we conclude  
		\begin{align}\notag
			\|\Delta(U\ke^+\psi\ke^+)\|^2_{\L(B\ke^+)} 
			&\leq
			C\rho\ke^n\|2( \nabla U\ke^+,\nabla\psi\ke^+)+U\ke^+\Delta \psi\ke^+\|^2_{\LL^\infty(B\ke^+\setminus\B({\rho\ke\over 4},x\ke))}\\\notag&\leq
			C_1\rho\ke^{n }\left(\rho\ke^{ -2}\|\nabla U\ke^+\|^2_{\LL^\infty(B\ke^+\setminus\B({\rho\ke\over 4},x\ke))}+
			\rho\ke^{ -4}\|U\ke^+\|^2_{\LL^\infty(B\ke^+\setminus\B({\rho\ke\over 4},x\ke))}\right)
			\\
			&\leq C_2\rho\ke^{n-2}\gamma\ke^2\varkappa\ke^2.\label{est:div}
		\end{align}
		Combining \eqref{lemma:prop1:3}, \eqref{trace:standard}, \eqref{est:div} 
		we arrive at 
		\begin{align} \notag
			&\sum_{k\in\N}\rho\ke\gamma\ke^{-2}\varkappa\ke^{-2}|f\ke|^2\|\Delta (U\ke^+\psi\ke^+) \|^2_{\L(B\ke^+)}
			\\\notag
			& \leq 2\sum_{k\in\N}\rho\ke\gamma\ke^{-2}\varkappa\ke^{-2}\left(|\la f^-\ra_{B\ke^-}|^2+|\la f^+\ra_{B\ke^+}|^2\right)\|\Delta (U\ke^+\psi\ke^+) \|^2_{\L(B\ke^+)}\\
			&  \leq 
			C\sum_{k\in\N}\left(\|f^+\|^2_{\L(S\ke)}+  \|\nabla f^+\|^2_{\L(B\ke^+)}+
			\|f^-\|^2_{\L(S\ke)}+  \|\nabla f^-\|^2_{\L(B\ke^-)}\right)
			\le C_1\|f\|^2_{\HS^1}.\label{Q2:term1}
		\end{align}
		
		Finally, by   using   \eqref{Poincare}, we estimate the second factor in \eqref{lemma:Q2:1} :
		\begin{align}\notag
			\sum_{k\in\N}\rho\ke^{-1}\gamma\ke^{2}\varkappa\ke^{2}\|u-
			\la u\ra_{B\ke^+}\|^2_{\L(B\ke^+)}&\leq 
			C\sum_{k\in\N}\rho\ke\gamma\ke^{2}\varkappa\ke^{2}\|\nabla u\|^2_{\L(B\ke^\pm)}
			\\
			&\leq
			C(\sup_{k\in\N}\rho^{1/2}\ke\gamma\ke \varkappa\ke)^2\|u\|^2_{\HS^1}. 
			\label{Q2:term2} 
		\end{align}
		The required estimate \eqref{Q2:est} for $Q\e^{2,+}$ follows from \eqref{lemma:Q2:1}, \eqref{Q2:term1}, \eqref{Q2:term2}. For $Q\e^{2,-}$ the proof is similar.
		The lemma is proven.
	\end{proof}
	
	\begin{lemma}
		\label{lemma:Q3}
		One has
		\begin{gather}\label{Q3:est}
			|Q^{3,\pm}\e|\leq C\sup_{k\in\N}(\rho\ke^{1/2}\gamma\ke)\|f\|_{\HS^1}\|u\|_{\HS\e^1}.
		\end{gather}
	\end{lemma}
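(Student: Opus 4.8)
The plan is to bound $|Q^{3,\pm}\e|$ by treating the sum term by term in $k$, after rewriting each summand so that the discrepancies between means over $B\ke^\pm$ and means over $S\ke$ are isolated. Writing $A_k\ceq\la u^\pm\ra_{B\ke^\pm}$ and $B_k\ceq\la u^\pm\ra_{S\ke}$, from the definition of $Q^{3,\pm}\e$ and the elementary identity
\[
f\ke\overline{A_k}-\la[f]\ra_{S\ke}\overline{B_k}=\bigl(f\ke-\la[f]\ra_{S\ke}\bigr)\overline{A_k}+\la[f]\ra_{S\ke}\bigl(\overline{A_k}-\overline{B_k}\bigr)
\]
one gets $|Q^{3,\pm}\e|\le\tfrac14(T_1+T_2)$, where $T_1\ceq\suml_{k\in\N}|f\ke-\la[f]\ra_{S\ke}|\,|A_k|\,\capty(D\ke)$ and $T_2\ceq\suml_{k\in\N}|\la[f]\ra_{S\ke}|\,|A_k-B_k|\,\capty(D\ke)$.

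For $T_1$ I would argue as follows. Since $f\ke-\la[f]\ra_{S\ke}=(\la f^+\ra_{B\ke^+}-\la f^+\ra_{S\ke})-(\la f^-\ra_{B\ke^-}-\la f^-\ra_{S\ke})$, the estimate \eqref{mean:mean} gives $|f\ke-\la[f]\ra_{S\ke}|\le C\rho\ke^{(2-n)/2}(\|\nabla f^+\|_{\L(B\ke^+)}+\|\nabla f^-\|_{\L(B\ke^-)})$; the estimate \eqref{lemma:prop1:3} gives $|A_k|\le C\rho\ke^{(1-n)/2}(\|u^\pm\|_{\L(S\ke)}+\|\nabla u^\pm\|_{\L(B\ke^\pm)})$; and \eqref{capty:bound} gives $\capty(D\ke)\le C\rho\ke^{n-1}\gamma\ke$. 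Multiplying the three bounds, the powers of $\rho\ke$ combine to $\rho\ke^{(2-n)/2+(1-n)/2+(n-1)}=\rho\ke^{1/2}$, so each summand of $T_1$ is bounded by $C\rho\ke^{1/2}\gamma\ke(\|\nabla f^+\|_{\L(B\ke^+)}+\|\nabla f^-\|_{\L(B\ke^-)})(\|u^\pm\|_{\L(S\ke)}+\|\nabla u^\pm\|_{\L(B\ke^\pm)})$. Pulling out $\sup_{k\in\N}(\rho\ke^{1/2}\gamma\ke)$, applying the Cauchy--Schwarz inequality in $k$, and then using that the balls $B\ke$ (hence the sets $B\ke^\pm$ and $S\ke$) are pairwise disjoint by \eqref{assump:1} together with the trace estimate \eqref{trace:standard} — so that $\suml_k\|\nabla f^\pm\|^2_{\L(B\ke^\pm)}\le\|f\|^2_{\HS^1}$, $\suml_k\|u^\pm\|^2_{\L(S\ke)}\le\|u^\pm\|^2_{\L(\Gamma)}\le C\|u\|^2_{\HS^1\e}$, and $\suml_k\|\nabla u^\pm\|^2_{\L(B\ke^\pm)}\le\|u\|^2_{\HS^1\e}$ — one arrives at $T_1\le C\sup_{k\in\N}(\rho\ke^{1/2}\gamma\ke)\|f\|_{\HS^1}\|u\|_{\HS^1\e}$.

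For $T_2$ the scheme is identical: bound $|\la[f]\ra_{S\ke}|\le C\rho\ke^{(1-n)/2}(\|f^+\|_{\L(S\ke)}+\|f^-\|_{\L(S\ke)})$ by Cauchy--Schwarz and $\area(S\ke)\sim\rho\ke^{n-1}$, bound $|A_k-B_k|\le C\rho\ke^{(2-n)/2}\|\nabla u^\pm\|_{\L(B\ke^\pm)}$ by \eqref{mean:mean}, and use $\capty(D\ke)\le C\rho\ke^{n-1}\gamma\ke$ once more; the powers of $\rho\ke$ again telescope to $\rho\ke^{1/2}$, and the same disjointness/Cauchy--Schwarz/trace argument gives $T_2\le C\sup_{k\in\N}(\rho\ke^{1/2}\gamma\ke)\|f\|_{\HS^1}\|u\|_{\HS^1\e}$. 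Adding the two bounds yields \eqref{Q3:est}. The only point requiring care is the bookkeeping of the exponents of $\rho\ke$: it is precisely the identity $\tfrac{2-n}{2}+\tfrac{1-n}{2}+(n-1)=\tfrac12$, valid for every $n\ge2$, that produces the gain $\rho\ke^{1/2}$; using the cruder bound $|A_k|^2\le C\rho\ke^{-n}\|u^\pm\|^2_{\L(B\ke^\pm)}$ in place of the trace-type estimate \eqref{lemma:prop1:3} would lose exactly this factor and leave only $\sup_k\gamma\ke$, which is not enough.
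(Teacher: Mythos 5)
Your proposal is correct and follows essentially the same route as the paper: the paper likewise splits $Q^{3,\pm}_\eps = Q^{3,1}_\eps + Q^{3,2}_\eps$ using precisely your identity (with $Q^{3,1}_\eps$ carrying the factor $f_{k,\eps}-\la[f]\ra_{S_{k,\eps}}$ times $\la u^\pm\ra_{B_{k,\eps}^\pm}$, and $Q^{3,2}_\eps$ carrying $\la[f]\ra_{S_{k,\eps}}$ times the mean-value discrepancy of $u^\pm$), and then invokes the same trio of tools — \eqref{mean:mean}, \eqref{lemma:prop1:3} together with \eqref{trace:standard}, and \eqref{capty:bound} — followed by Cauchy--Schwarz in $k$ and disjointness via \eqref{assump:1}. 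The only (purely presentational) difference is that you bound each summand first and then apply Cauchy--Schwarz in $k$, while the paper distributes weights $\rho_{k,\eps}^{\pm(n-1)/2}$ inside the Cauchy--Schwarz step before bounding; the exponent bookkeeping $(2-n)/2 + (1-n)/2 + (n-1) = 1/2$ is identical in both.
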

	
	\begin{proof}One can represent $Q^{3,+}\e$ is the form $Q^{3,+}\e=Q^{3,1 }\e+Q^{3,2 }\e$, where
		\begin{align*}
			Q^{3,1 }\e &=
			{1\over 4}\sum_{k\in\N}
			\left\{\left(\la f^+\ra_{B\ke^+}-\la f^+\ra_{S\ke}\right)-\left(\la f^-\ra_{B\ke^-}-\la f^-\ra_{S\ke}\right)\right\} \overline{\la u^+\ra_{B\ke^+}}\capty(D\ke) \\
			Q^{3,2 }\e &=
			{1\over 4}\sum_{k\in\N}
			\left(\la f^+\ra_{S\ke}-\la f^-\ra_{S\ke}\right)\overline{(\la u^+\ra_{B\ke^\pm}-\la u^+\ra_{S\ke})}\capty(D\ke) 
		\end{align*}		
		Using   \eqref{mean:mean} (applied for $f^\pm$), \eqref{lemma:prop1:3} and \eqref{trace:standard} (applied for $u^+$), \eqref{capty:bound}, we get	
		\begin{multline}\label{Q31}
			|Q^{3,1 }\e|\leq 
			{1\over 4}\left(2\sum_{k\in\N}(\capty(D\ke))^2 \rho\ke ^{1-n}\left(|\la f^+\ra_{B\ke^+}-\la f^+\ra_{S\ke}|^2+ |\la f^-\ra_{B\ke^-}-\la f^-\ra_{S\ke}|^2\right)    \right)^{1/2}
			\\
			\times\left(\sum_{k\in\N} \rho\ke ^{n-1}|\la u^+\ra_{B\ke^+}|^2 \right)^{1/2}
			\leq 
			C\left(\sum_{k\in\N\e}\rho\ke\gamma\ke^2\left(\|\nabla f^+\|^2_{\L(B\ke^+)}+\|\nabla f^-\|^2_{\L(B\ke^-)}\right) \right)^{1/2}
			\\
			\times
			\left(\sum_{k\in\N}
			\| \nabla u^+\|^2_{\L(B\ke^+)}+ \sum_{k\in\N}\|   u^+\|^2_{\L(S\ke)}  \right)^{1/2}\leq
			C\sup_{k\in\N}(\rho\ke^{1/2}\gamma\e)\|f\|_{\HS^1} \|u\|_{\HS^1\e}.
		\end{multline}
		Similarly, applying the Cauchy-Schwarz inequality (for $f^\pm$),  \eqref{mean:mean} (for $u^+$),  and \eqref{capty:bound}, we estimate $Q^{3,2 }\e$:
		\begin{multline}\label{Q32}
			|Q^{3,2 }\e|\leq 
			{1\over 4}\left(2\sum_{k\in\N}(\capty(D\ke))^2 \rho\ke ^{2-n}\left(|\la f^+\ra_{S\ke}|^2+ 
			|\la f^-\ra_{S\ke}|^2\right)    \right)^{1/2}
			\\
			\times\left(\sum_{k\in\N} \rho\ke ^{n-2}|\la u^+\ra_{B\ke^+}-\la u^+\ra_{S\ke}|^2 \right)^{1/2}
			\leq 
			C\left(\sum_{k\in\N\e}\rho\ke\gamma\ke^2\left(\| f^+\|^2_{\L(S\ke)}+\|  f^-\|^2_{\L(S\ke )}\right) \right)^{1/2}
			\\
			\times
			\left(\sum_{k\in\N}
			\| \nabla u^+\|^2_{\L(B\ke^+)}  \right)^{1/2}\leq
			C\sup_{k\in\N}(\rho\ke^{1/2}\gamma\e)\|f\|_{\HS^1} \|u\|_{\HS^1\e}.
		\end{multline}	
		Combining \eqref{Q31} and \eqref{Q32} we arrive at the required estimate \eqref{Q3:est} for
		$Q^{3,+}\e$. For $Q\e^{3,-}$ the proof is similar.
		The lemma is proven.
	\end{proof}

	\begin{lemma}\label{lemma:I4}
		One has
		\begin{gather}\label{I4:est}
			|I\e^4|\leq C\kappa\e\|f\|_{\HS^2}\|u\|_{\HS\e^1},
		\end{gather}
		where $\kappa\e\to 0$ is given in \eqref{assump:main}.
	\end{lemma}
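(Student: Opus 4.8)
The plan is to identify $I\e^4=Q^{4,+}\e+Q^{4,-}\e+I^3\e$ with exactly the quantity whose smallness is postulated in assumption \eqref{assump:main}, and then to control the two Sobolev norms appearing on its right-hand side. First I rewrite $Q^{4,+}\e+Q^{4,-}\e$ in a symmetric form. Since averaging over $S\ke$ is linear, $\overline{\la u^+\ra_{S\ke}}-\overline{\la u^-\ra_{S\ke}}=\overline{\la u^+-u^-\ra_{S\ke}}=\overline{\la [u]\ra_{S\ke}}$, so that
\begin{gather*}
	Q^{4,+}\e+Q^{4,-}\e={1\over 4}\suml_{k\in\N}\capty(D\ke)\,\la [f]\ra_{S\ke}\,\overline{\la [u]\ra_{S\ke}}.
\end{gather*}
Recalling $I^3\e=-(\gamma[f],[u])_{\L(\Gamma)}$ we obtain
\begin{gather*}
	I\e^4={1\over 4}\suml_{k\in\N}\capty(D\ke)\,\la [f]\ra_{S\ke}\,\overline{\la [u]\ra_{S\ke}}-(\gamma[f],[u])_{\L(\Gamma)},
\end{gather*}
which is precisely the expression under the absolute value in \eqref{assump:main} evaluated at $g\ceq[f]$ and $h\ceq[u]$ (indeed $\la\overline{h}\ra_{S\ke}=\overline{\la [u]\ra_{S\ke}}$ and $(\gamma g,h)_{\L(\Gamma)}=(\gamma[f],[u])_{\L(\Gamma)}$). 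Hence, as soon as $[f]\in\H^{3/2}(\Gamma)$ and $[u]\in\H^{1/2}(\Gamma)$, assumption \eqref{assump:main} yields
\begin{gather*}
	|I\e^4|\leq\kappa\e\,\|[f]\|_{\H^{3/2}(\Gamma)}\,\|[u]\|_{\H^{1/2}(\Gamma)},
\end{gather*}
and it remains to bound the two factors by $\|f\|_{\HS^2}$ and $\|u\|_{\HS\e^1}$, respectively.

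For $[u]$ this is the standard trace inequality: writing $u^\pm=u\restr_{\Omega^\pm}\in\H^1(\Omega^\pm)$ (legitimate since $u\in\H^1(\Omega\e)$) and using boundedness of the trace operator $\H^1(\Omega^\pm)\to\H^{1/2}(\Gamma)$ — available because $\Omega^\pm$ is Lipschitz and, by \eqref{h}, $\Gamma$ lies at positive distance from the remaining part of $\partial\Omega^\pm$ — we get
\begin{gather*}
	\|[u]\|_{\H^{1/2}(\Gamma)}\leq\|u^+\restr_\Gamma\|_{\H^{1/2}(\Gamma)}+\|u^-\restr_\Gamma\|_{\H^{1/2}(\Gamma)}\leq C\bigl(\|u^+\|_{\H^1(\Omega^+)}+\|u^-\|_{\H^1(\Omega^-)}\bigr)\leq C_1\|u\|_{\HS\e^1}.
\end{gather*}
For $[f]$ the crucial input is the improved regularity of $\dom(\A)$ provided by Lemma~\ref{lemma:H2}: for $f\in\HS^2$ one has $f^\pm\in\H^2(O^\pm)$ with $\|f^\pm\|_{\H^2(O^\pm)}\leq C\|f\|_{\HS^2}$, where $O^\pm$ is the slab adjacent to $\Gamma$ introduced before Lemma~\ref{lemma:H2}. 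Since $\Gamma=\{x^n=0\}$ is the flat face of the slab $O^\pm$, the trace operator $\H^2(O^\pm)\to\H^{3/2}(\Gamma)$ is bounded, so $f^\pm\restr_\Gamma\in\H^{3/2}(\Gamma)$ with $\|f^\pm\restr_\Gamma\|_{\H^{3/2}(\Gamma)}\leq C\|f\|_{\HS^2}$, whence $\|[f]\|_{\H^{3/2}(\Gamma)}=\|f^+\restr_\Gamma-f^-\restr_\Gamma\|_{\H^{3/2}(\Gamma)}\leq C_1\|f\|_{\HS^2}$. Combining the three displayed inequalities gives $|I\e^4|\leq C\kappa\e\|f\|_{\HS^2}\|u\|_{\HS\e^1}$, i.e.\ \eqref{I4:est}.

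There is no serious obstacle left inside this particular lemma: the only non-elementary ingredient is the $\H^{3/2}$-trace bound for $[f]$, and this is exactly what Lemma~\ref{lemma:H2} is designed to deliver (it is also the single place in the whole argument where the full strength of the $\HS^2$-norm, rather than just the $\HS^1$-norm, is needed). The genuine work behind \eqref{I4:est} has therefore been carried out in advance — in Lemma~\ref{lemma:H2}, which upgrades the regularity near $\Gamma$ to $\H^2$, and in hypothesis \eqref{assump:main} itself, whose verification in concrete geometries is postponed to Section~\ref{sec4}.
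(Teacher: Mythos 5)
Your proof is correct and follows essentially the same route as the paper: recognize that $Q^{4,+}\e+Q^{4,-}\e+I^3\e$ is precisely the expression in \eqref{assump:main} with $g=[f]$, $h=[u]$, then bound $\|[f]\|_{\H^{3/2}(\Gamma)}$ via Lemma~\ref{lemma:H2} and the trace theorem $\H^2(O^\pm)\to\H^{3/2}(\Gamma)$, and $\|[u]\|_{\H^{1/2}(\Gamma)}$ via the ordinary $\H^1\to\H^{1/2}$ trace bound. The only cosmetic difference is that you spell out the algebraic recombination $Q^{4,+}\e+Q^{4,-}\e={1\over 4}\sum_k\capty(D\ke)\la[f]\ra_{S\ke}\overline{\la[u]\ra_{S\ke}}$ explicitly, which the paper does silently.
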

	
	\begin{proof}
		Since $f^\pm\in\H^2(O^\pm)$ (see Lemma~\ref{lemma:H2}) and $u^\pm\in\H^1(\Omega^\pm)$,
		the trace theorem yields $f^\pm\in\H^{3/2}(\Gamma)$ and $u^\pm\in\H^{1/2}(\Gamma)$, moreover
		\begin{gather}\label{traces321}
			\|f^\pm\|_{\H^{3/2}(\Gamma)}\leq C\|f^{\pm}\|_{\H^2(O^\pm)} ,\quad
			\|u^\pm\|_{\H^{1/2}(\Gamma)}\leq C\|u^{\pm}\|_{\H^1(O^\pm)} .
		\end{gather}
		Then, using  \eqref{assump:main}, \eqref{traces321} and \eqref{H2est}, we obtain
		\begin{align*}
			|I\e^{4,+}|&=
			\left|{1\over 4}\sum_{k\in\N}\la [f]\ra_{S\ke}\overline{\la [u]\ra_{S\ke}} \capty(D\ke)
			-\left(\gamma [f],[u]\right)_{\L(\Gamma)}  \right|
			\leq
			\kappa\e\|[f]\|_{\H^{3/2}(\Gamma)} \|[u]\|_{\H^{1/2}(\Gamma)}\\&\leq
			\kappa\e
			\left(\|f^+\|_{\H^{3/2}(\Gamma)}+\|f^-\|_{\H^{3/2}(\Gamma)}\right)
			\left(\|u^+\|_{\H^{1/2}(\Gamma)}+\|u^-\|_{\H^{1/2}(\Gamma)}\right)
			\\
			&\leq
			C\kappa\e 
			\left(\|f^+\|_{\H^2(O^+)}+\|f^-\|_{\H^2(O^-)}\right) 
			\left(\|u^+\|_{\H^1(O^+)}+\|u^-\|_{\H^1(O^-)}\right) 
			\leq 
			C_1\kappa\e 
			\|f\|_{\HS^2}\|u\|_{\HS^1\e}.
		\end{align*}
		The lemma is proven.  
	\end{proof}
	
	Using the definitions of $\eta\ke$ and $\gamma\ke$ one can easily get
	\begin{gather}\label{rhoga:1}
		\rho\ke^{1/2}\gamma\ke=\eta\ke \gamma\ke^{1/2} A\ke,\text{ where }A\ke\ceq 
		\begin{cases}
			\left({d\ke\over\rho\ke}\right)^{n-4\over 2},&n\ge 5,\\
			\left|\ln {d\ke\over\rho\ke}\right|^{-1},&n=4,\\
			1,&n=3,\\
			\left|1+\gamma\ke\rho\ke{\ln \rho\ke }\right|^{1/2},&n=2.
		\end{cases}
	\end{gather}
	Note that 
	\begin{gather}\label{Ake:prop}
		A\ke\leq C
	\end{gather}
	(actually, we even have $\sup_{k\in\N}A\ke\to 0$ as $\eps\to0$ for $n\ge 4$).
	The equality \eqref{rhoga:1} shows that the estimate \eqref{I1:est}
	provides worse (or the same) convergence rate than the estimate \eqref{Q3:est} and, if $n\ge 3$,
	than  the estimate \eqref{Q2:est}. In the case $n=2$ one has a ``competition'' between the converge rates $\sup_{k\in\N}\eta\ke$ coming from  \eqref{I1:est} and $\sup_{k\in\N}(\rho\ke^{1/2}\gamma\ke|\ln\rho\ke|)$ coming from \eqref{Q2:est}:
	for example, if $\gamma\ke\geq C_1>0$, then the second one dominates, but for very small  $d\ke$ (namely, if $\sup_{k\in\N}(\gamma\ke^{1/2}|\ln\rho\ke|)\to 0$ as $\eps\to 0$)
	the first one gives worse convergence rate. Taking into account the above observations, \eqref{HHH} and the definition \eqref{etae} of $\mu\e$, 
	we conclude from
	 \eqref{aa:repres}--\eqref{I1:est}, \eqref{Q2:est}, \eqref{Q3:est}, \eqref{I4:est}, 
	 the final estimate for   $\a\e[\J\e f,u]-\a[f,\J'\e u]$.
	
	\begin{lemma}
		\label{lemma:main:2}
		One has
		\begin{gather}\label{mainest:2}
			\forall f\in\HS^2,\, u\in\HS^1 :\  
			|\a\e[\J\e f,u]-\a[f,\J'\e u]|\leq 
			C\mu\e\|f\|_{\HS^2}\|u\|_{\HS^1\e}.
		\end{gather}
	\end{lemma}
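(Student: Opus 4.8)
The statement is obtained by assembling the representation \eqref{aa:repres} with the bounds for its individual terms established in Lemmata~\ref{lemma:I1}--\ref{lemma:I4}. First I would invoke the cancellation \eqref{Q1pm}, $Q^{1,+}\e+Q^{1,-}\e=0$, which allows \eqref{aa:repres} to be rewritten as
\begin{gather*}
\a\e[\J\e f,u]-\a[f,\J'\e u]=I^{1}\e+\sum_{k=2}^{3}\left(Q^{k,+}\e+Q^{k,-}\e\right)+I^{4}\e.
\end{gather*}
Applying the triangle inequality, estimating $I^{1}\e=I^{1,+}\e+I^{1,-}\e$ by \eqref{I1:est}, the terms $Q^{2,\pm}\e$, $Q^{3,\pm}\e$ by \eqref{Q2:est} and \eqref{Q3:est}, and $I^{4}\e$ by \eqref{I4:est}, and finally using $\|f\|_{\HS^1}\leq\|f\|_{\HS^2}$ from \eqref{HHH}, I arrive at
\begin{gather*}
|\a\e[\J\e f,u]-\a[f,\J'\e u]|\leq C\left(\sup_{k\in\N}\eta\ke+\sup_{k\in\N}\left(\rho\ke^{1/2}\gamma\ke\varkappa\ke\right)+\sup_{k\in\N}\left(\rho\ke^{1/2}\gamma\ke\right)+\kappa\e\right)\|f\|_{\HS^2}\|u\|_{\HS^1\e}.
\end{gather*}

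It then remains to show that the bracketed sum of suprema is $\mathcal{O}(\mu\e)$. The quantities $\kappa\e$ and $\sup_{k\in\N}\eta\ke$ are, by \eqref{etae}, already among the terms defining $\mu\e$. For $\sup_{k\in\N}\left(\rho\ke^{1/2}\gamma\ke\right)$ I would use the identity \eqref{rhoga:1}, namely $\rho\ke^{1/2}\gamma\ke=\eta\ke\gamma\ke^{1/2}A\ke$, and combine the uniform bound \eqref{Ake:prop} on $A\ke$ with the uniform boundedness \eqref{assump:3} of $\gamma\ke$ to conclude $\rho\ke^{1/2}\gamma\ke\leq C\eta\ke\leq C\mu\e$. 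For the term $\sup_{k\in\N}\left(\rho\ke^{1/2}\gamma\ke\varkappa\ke\right)$ I would distinguish dimensions: if $n\geq 3$ then $\varkappa\ke=1$ and this reduces to the previous case, while if $n=2$ then $\varkappa\ke=|\ln\rho\ke|$ and $\sup_{k\in\N}\left(\rho\ke^{1/2}\gamma\ke|\ln\rho\ke|\right)$ is precisely the third quantity occurring in the $n=2$ branch of \eqref{etae}. Collecting these observations yields \eqref{mainest:2}.

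The proof is purely bookkeeping, since all the analytic content is already contained in Lemmata~\ref{lemma:I1}--\ref{lemma:I4}; no step is a real obstacle. The only point requiring mild care is the accounting of convergence rates in dimension $n=2$, where, as noted just before the statement, the rates $\sup_{k\in\N}\eta\ke$ (coming from \eqref{I1:est}) and $\sup_{k\in\N}\left(\rho\ke^{1/2}\gamma\ke|\ln\rho\ke|\right)$ (coming from \eqref{Q2:est}) genuinely compete and neither dominates the other in general --- which is exactly why both are retained in the definition of $\mu\e$ for $n=2$.
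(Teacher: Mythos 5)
Your proof is correct and matches the paper's own argument essentially verbatim: the paper also combines the decomposition \eqref{aa:repres} with the cancellation \eqref{Q1pm}, invokes Lemmata~\ref{lemma:I1}--\ref{lemma:I4} and \eqref{HHH}, and then uses \eqref{rhoga:1}, \eqref{Ake:prop}, \eqref{assump:3} to absorb $\rho\ke^{1/2}\gamma\ke$ into $\eta\ke$, retaining the extra $\rho\ke^{1/2}\gamma\ke|\ln\rho\ke|$ term in $\mu\e$ only for $n=2$. No discrepancies.
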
 
	
	\subsection{End of proofs of Theorems~\ref{th1} and \ref{th2}}\label{subsec:3:4}
	
	By virtue of Proposition~\ref{propA1}, the properties 
	\eqref{automatic}, \eqref{mainest:1}, \eqref{mainest:2} imply
	the estimate
	\begin{align}\notag\forall f\in\HS:\quad
		\|(\A\e+\Id)^{-1}f -\J\e (\A+\Id)^{-1}f\|_{\HS }&\leq \|(\A\e+\Id)^{-1}f -\J\e (\A+\Id)^{-1}f\|_{\HS^1\e}\\\label{diffAA}&\leq C\mu\e\|f\|_{\HS}
	\end{align}
	(the first inequality above   follows trivially from the definition of $\|\cdot \|_{\HS^1\e}$)
	Then, using \eqref{HHH}, \eqref{mainest:1}, \eqref{diffAA} we get  
	\begin{align*}
		\|(\A\e+\Id)^{-1}f - (\A+\Id)^{-1}f\|_{\HS}&\leq
		\|(\A\e+\Id)^{-1}f -\J\e (\A+\Id)^{-1}f\|_{\HS}+
		\|(\J\e-\Id)(\A+\Id)^{-1}f\|_{\HS}
		\\
		&\leq C\mu\e\left(\|f\|_{\HS}+\|(\A+\Id)^{-1}f\|_{\HS^1}\right)\\
		&\leq C\mu\e\left(\|f\|_{\HS}+\|(\A+\Id)^{-1}f\|_{\HS^2}\right)
		= 2C\mu\e\|f\|_{\HS},
	\end{align*}
	and Theorem~\ref{th1} is proven. 
	
	Now, we proceed to the proof of Theorem~\ref{th2}.
	Let $f\in\HS$ and $g=(\A+\Id)^{-1}f$. One has
	\begin{gather}\label{th2:proof1}
		(\A\e+\Id)^{-1}f -(\A+\Id)^{-1} f - \mathscr{K}\e f=
		(\A\e+\Id)^{-1}f -\J\e(\A+\Id)^{-1} f +(\mathscr{V}\e +\mathscr{W}\e)g.
	\end{gather}
	Here $\mathscr{K}\e$ is defined by \eqref{Ke}, and  
	\begin{align*}
		(\mathscr{V}\e g)^\pm (x)&\ceq 
		\ds\sum_{k\in\N}\left(\la g^\pm \ra_{B\ke^\pm}-g^\pm(x)\right)\phi^\pm\ke(x),\ x\in\Omega^\pm,
		\\
		(\mathscr{W}\e g)^\pm (x)&\ceq 
		\pm\ds{1\over 2}\sum_{k\in\N} 
		\la[g]\ra_{S\ke} U\ke^\pm(x)\mp{1\over 2}\sum_{k\in\N}g\ke U\ke^\pm(x)\psi\ke^\pm(x) ,\
		x\in\Omega^\pm,
	\end{align*}
	where $g\ke\ceq \la g^+ \ra_{B\ke^+}-\la g^- \ra_{B\ke^-}$.
	Due to   \eqref{diffAA} we have
	\begin{gather}\label{diffAA+}
		\|(\A\e+\Id)^{-1}f -\J\e(\A+\Id)^{-1} f\|_{\H^1(\Omega\setminus\Gamma)}=
		\|(\A\e+\Id)^{-1}f -\J\e(\A+\Id)^{-1} f\|_{\HS^1\e}\leq C_1\|f\|_{\HS}.
	\end{gather}
	
	Using the estimates \eqref{lemma:prop1:2}, \eqref{lemma:prop1:2+}, \eqref{I1:3}, \eqref{I1:5} (applied for $g^\pm$ instead of $f^\pm$) and  \eqref{zeta:eta}, we get
	\begin{align}
		\label{Ve:est:1}
		\| (\mathscr{V}\e g)^\pm \|_{\L(\Omega^\pm)}&\leq
		\Bigl(\suml_{k\in \N}\|g^\pm - \la g^\pm\ra_{B^\pm\ke}\|_{\L(\wt D\ke^\pm)}^2\Bigr)^{1/2}
		\leq C\zeta\e\|g\|_{\HS^1}\leq C_1\mu\e\|g\|_{\HS^1},
		\\
		\notag
		\|\nabla ( \mathscr{V}\e g)^\pm  \|_{\L(\Omega^\pm)}
		&\leq  
		\Bigl( {\suml_{k\in \N}\|\nabla g^\pm \|_{\L(\wt D\ke^\pm)}^2\Bigr)^{1/2}}  + \Bigl(
		{\suml_{k\in \N}\|( g^\pm - \la g^\pm\ra_{B^\pm\ke})\nabla \phi^\pm\ke\|_{\L(\wt D\ke^\pm)}^2\Bigr)^{1/2}}
		\\\label{Ve:est:2}
		&\leq C\sup_{k\in\N}\eta\ke\|g\|_{\HS^2}\leq C \mu\e\|g\|_{\HS^2}.
	\end{align}
	
	Let us now estimate $(\mathscr{W}\e g)^+$. 
	One has
	\begin{align*}
		(\mathscr{W}\e g)^+ =\ds
		{1\over 2}\suml_{k\in \N}\left(\la g^+\ra_{S\ke}-\la g^+\ra_{B\ke^+}\right) U\ke^+-{1\over 2}\suml_{k\in \N}\left(\la g^-\ra_{S\ke}-\la g^-\ra_{B\ke^-}\right) U\ke^+ 
		\\
		+{1\over 2}\suml_{k\in \N}\left(\la g^+\ra_{B\ke^+}-\la g^-\ra_{B\ke^-}\right) U\ke^+ (1-\psi\ke^+ ).
	\end{align*}
	whence, taking into account \eqref{assump:1}, we infer
	\begin{align}\notag
		\| (\mathscr{W}\e g)^+ \|_{\H^1(\Omega^+)}&\leq 
		{1\over 2}\left(\sum_{k\in\N}|\la g^+\ra_{S\ke}-\la g^+\ra_{B\ke^+}|^2\|U\ke^+\|^2_{\H^1(B\ke^+)}\right)^{1/2}\\\notag
		&+
		{1\over 2}\left(\sum_{k\in\N}|\la g^-\ra_{S\ke}-\la g^-\ra_{B\ke^-}|^2\|U\ke^+\|^2_{\H^1(B\ke^+)}\right)^{1/2}\\
		&+
		{1\over 2}\left(2\suml_{k\in \N}
		\left(|\la g^+\ra_{B^+\ke}|^2+|\la g^-\ra_{B\ke^-}|^2\right)\|U\ke^+(1-\psi\ke^+)\|^2_{\H^1(B\ke^+)}\right)^{1/2}.	\label{We:1}
	\end{align}	
	From \eqref{lemma:prop1:4} we get
	\begin{gather}
		\|U\ke^+ \|_{\H^1(B\ke^+)}^2=\|\nabla U\ke^+\|_{\L(B\ke^+)}^2+ \|U\ke^+\|_{\L(B\ke^+)}^2\leq   C\rho\ke^{n-1}\gamma\ke.\label{We:2}
	\end{gather}
	Also, using the properties  
	$$
	0\leq \psi^+\ke\leq 1,
	\quad 
	|\nabla\psi\ke^+|\leq C\rho\ke^{-1},
	\quad
	\supp(1-\psi\ke)\cap B\ke^+ \subset \overline{B\ke^+\setminus\B({\rho\ke\over 4},x\ke)}
	$$
	and
	\eqref{H:estimates:1}--\eqref{H:estimates:2},    we deduce easily the estimate
	\begin{multline}
		\|U\ke^+(1-\psi\ke^+)\|_{\H^1(B\ke^+)}^2=
		\|\nabla(U\ke^+(1-\psi\ke^+))\|_{\L(B\ke^+\setminus\B({\rho\ke\over 4},x\ke))}^2+\|U\ke^+(1-\psi\ke^+)\|_{\L(B\ke^+\setminus\B({\rho\ke\over 4},x\ke))}^2
		\\
		\le C\left(\|\nabla U\ke^+\|_{\L(B\ke^+\setminus\B({\rho\ke\over 4},x\ke)))}^2+ \rho\ke^{-2}\|U\ke^+\|_{\L(B\ke^+\setminus\B({\rho\ke\over 4},x\ke))}^2\right)+\|U\ke^+\|_{\L(B\ke^+\setminus\B({\rho\ke\over 4},x\ke))}^2\\
		\leq
		\left.
		\begin{cases}
			{Cd\ke^{2(n-2)}\over \rho\ke^{n-2}},&n\geq 3\\
			{C|\ln \rho\ke|^2\over |\ln d\ke|^2},&n=2
		\end{cases}\right\} 
		\leq C_1\rho\ke^n\varkappa\ke^2\gamma\ke^2.\label{We:3}
	\end{multline}
	By Lemma~\ref{lemma:H2} $g^\pm\in\H^2(O^\pm)$, whence $\nabla g^\pm\in \H^1(O^\pm)$. 
	Using the estimate \eqref{trace} (applied for $\nabla g^\pm$), \eqref{assump:2} and the trace estimate 
	$\|\nabla g^\pm\|^2_{\L(\Gamma)}\leq C\| g^\pm\|^2_{\H^2(O^\pm)}$, we obtain
	\begin{align}\notag
		\suml_{k\in\N}\rho\ke\gamma\ke\|\nabla g^\pm\|^2_{\L(B\ke^\pm)} 
		&\leq
		C\suml_{k\in\N}\rho\ke\gamma\ke\Big(\rho\ke \|\nabla g^\pm\|^2_{\L(S\ke )}+
		\rho\ke^2\sum_{i,j=1}^n\left\|{\partial g^\pm\over\partial {x^i}\partial {x^j}}  \right\|^2_{\L(B\ke^\pm)}\Big)\\\notag
		&\leq
		C\sup_{k\in\N}(\rho\ke^2\gamma\ke)\Big(\|\nabla g^\pm\|^2_{\L(\Gamma)}+
		\sum_{i,j=1}^n\left\|{\partial g^\pm\over\partial {x^i}\partial {x^j}}  \right\|^2_{\L(O^\pm)}\Big)
		\\
		&\leq C_1\sup_{k\in\N}(\rho\ke^2\gamma\ke)\|g^\pm\|_{\H^2(O^\pm)}^2.\label{gradg}
	\end{align}
	Using \eqref{mean:mean}, \eqref{lemma:prop1:3}, \eqref{trace:standard}, \eqref{We:1}--\eqref{gradg} and taking into account \eqref{HHH},  \eqref{rhoga:1}, \eqref{Ake:prop}, and
	\begin{gather}\label{rhoga:2}
		\rho\ke\gamma\ke^{1/2}=\eta\ke \rho\ke^{1/2} A\ke,\text{ where }
		 A\ke\text{ is given in \eqref{rhoga:1}}
	\end{gather} 
	(the above equality follows directly from \eqref{rhoga:1}), 
	we get
	\begin{align}\notag 
		\|(\mathscr{W}\e g)^+\|_{\H^1(\Omega^+)}&\leq
		C\Bigl(\suml_{k\in\N}\rho\ke \gamma\ke\|\nabla g^+\|^2_{\L(B\ke^+)}\Bigr)^{1/2}+
		C\Bigl(\suml_{k\in\N}\rho\ke \gamma\ke\|\nabla g^-\|^2_{\L(B\ke^-)}\Bigr)^{1/2}\\\notag 
		&+C\Bigl(\suml_{k\in\N}\varkappa\ke^2\gamma\ke^2\big(\|g^+\|^2_{\L(B\ke^+)}+\|g^-\|^2_{\L(B\ke^-)}\big)\Bigr)^{1/2}
		\\\notag &\leq
		C\sup_{k\in\N}(\rho\ke \gamma\ke^{1/2})\left(\|g^+\| _{\H^2(O^+)}+
		 \|g^-\| _{\H^2(O^-)}\right)\\  &+
		C\sup_{k\in\N}(\rho\ke^{1/2} \varkappa\ke\gamma\ke)\left(\|g^+\| _{\H^1( \Omega^+)}+
		 \|g^-\| _{\H^1( \Omega^-)}\right) 
		 \leq C_1\mu\e\|g\|_{\HS^2}.\label{We:est+}
	\end{align}	
	Similarly,
	\begin{align}
		\|(\mathscr{W}\e g)^-\|_{\H^1(\Omega^-)}\leq C\mu\e\|g\|_{\HS^2}.\label{We:est-}
	\end{align}
	It follows from  \eqref{th2:proof1}--\eqref{Ve:est:2}, \eqref{We:est+}, \eqref{We:est-} and  $\|g\|_{\HS^1}\leq\|g\|_{\HS^2}=\|f\|_{\HS}$  that
	\begin{gather*}
		\|(\A\e+\Id)^{-1}f -(\Id+\mathscr{K}\e)(\A+\Id)^{-1} f\|_{\H^1(\Omega\setminus\Gamma)}\leq C\mu\e\|f\|_{\HS}.
	\end{gather*}
	
	It remains to prove \eqref{K:prop1}--\eqref{K:prop2}.
	Let $f\in\HS$ and $g=(\A+\Id)^{-1}f$.
	Using \eqref{assump:1}, \eqref{lemma:prop1:4}--\eqref{trace:standard} and the Cauchy-Schwarz inequality, we get
	\begin{align*}\notag
		\|(\mathscr{K}\e f)^\pm\|^2_{\L(\Omega^\pm)}&=
		{1\over 4}\suml_{k\in\N} |\la [g] \ra_{S\ke}|^2\\
		&\notag\leq C
		\sum_{k\in\N}\|U\ke^\pm\|^2_{\L(B\ke^\pm)}\rho\ke^{2}\gamma\ke\left(\|g^+\|^2_{\L(S\ke)}+\|g^-\|^2_{\L(S\ke)}\right)
		\\\notag
		&\leq C_1 \sup_{k\in\N}(\rho\ke^{2}\gamma\ke  )\|g\|_{\HS^1}^2
		\leq C_1 \sup_{k\in\N}(\rho\ke^{2}\gamma\ke  )\|g\|_{\HS^2}^2\\\notag
		&=C_1\ \sup_{k\in\N}(\rho\ke^{2}\gamma\ke  )\|f\|_{\HS}^2\leq
		C_2\ \sup_{k\in\N}(\eta\ke^{2}\rho\ke  )\|f\|_{\HS}^2 
	\end{align*}
	(on the last step we use \eqref{rhoga:2} and \eqref{Ake:prop}).
	Furthermore, using \eqref{capty:eq} and \eqref{H:symm}, we obtain
	\begin{gather*} 
		\|\nabla (\mathscr{K}\e f)^\pm\|^2_{\L(\Omega^\pm)}=
		{1\over 4}\suml_{k\in\N}\|\nabla U\ke^\pm\|^2_{\L(B\ke^\pm)}|\la [g] \ra_{S\ke}|^2=
		{1\over 2}\left(\|\ga^{1/2}[g]\|^2_{\L(\Gamma)}+  L\e\right), 
	\end{gather*}	
	where $L\e\ceq {1\over 4}\sum_{k\in\N}\capty(D\ke)|\la [g] \ra_{S\ke}|^2-\|\ga^{1/2}[g]\|^2_{\L(\Gamma)} .$
	Using \eqref{assump:main}, the trace theorem and Lemma~\ref{lemma:H2}, we get the required estimate on the remainder $L\e$:
	\begin{align*}
		|L\e|&\leq  \kappa\e\|[g]\|_{\H^{3/2}(\Gamma)}\|[g]\|_{\H^{1/2}(\Gamma)}
		\leq
		C\kappa\e\left(\|g\|_{\H^2(O^+)}+\|g\|_{\H^2(O^-)}\right)\left(\|g\|_{\H^1(\Omega^+)}+\|g\|_{\H^1(\Omega^-)}\right)\\
		&\leq C_1\kappa\e\|g\|_{\HS^2}\|g\|_{\HS^1}\leq
		C_1\kappa\e\|g\|^2_{\HS^2}=C_1\kappa\e\|f\|_{\HS}^2.
	\end{align*}
	Theorem~\ref{th2} is proven. 
	
	\begin{remark}
		\label{rem:intersection}
		Lemma~\ref{lemma:H2} is   the only place  where 
		we use the special form of the domain $\Omega$, see \eqref{h}. If $\Gamma$ intersects $\Omega$, we face a problem -- $\H^2$-regularity fails  in a neighborhood of $\partial\Omega\cap\Gamma$, where $\delta'$-interface conditions ``meet'' with the Neumann boundary conditions.
		This difficulty can be overcame in the case when the holes belong to a subset $\Gamma'\subset\Gamma\cap\Omega$ with $\dist(\Gamma',\partial\Omega)>0$ and $\supp(\gamma)\subset\Gamma'$ (cf. Remark~\ref{rem:Gamma'}). In this case the functions from $\dom(\A)$ belong to $\H^2(O'^\pm)$, where
		$O'^\pm\ceq\{x=(x',x^n):\  x'\in \Gamma',\ |x^n|<\dist(\Gamma',\partial\Omega)/2,\ \pm x^n>0\}$, and the estimate \eqref{H2est} holds with $O^\pm$ being replaced by $O'^\pm$ -- these properties allows to carry out the proof in the same was as it is done above for $\Omega$ satisfying \eqref{h}.

	\end{remark}

	\section{Examples\label{sec4} }
	
	In this section we present two examples for which the assumption \eqref{assump:main} holds true.
	
	\subsection{Very small holes}
	The first example deals with the case when the holes $D\ke$ are so small that 
	the limiting function $\gamma$ is zero. 
	
	Assume that the sets $D\ke\subset\Gamma$ satisfies the conditions \eqref{assump:1} and \eqref{assump:2},
	while \eqref{assump:3} is complemented by the stronger assumption
	\begin{gather*}
		\lim_{\eps\to 0}\sup_{k\in\N}\gamma\ke\to 0.
	\end{gather*}
	Let $g\in\H^{3/2}(\Gamma)$ and $h\in \H^{1/2}(\Gamma)$. 
	Using the Cauchy-Schwarz inequality, the estimate \eqref{capty:bound} and
	taking into account that $\area(S\ke)=C\rho\ke^{n-1}$, we obtain
	\begin{align}\notag
		\left|\suml_{k\in\N}\capty(D\ke)
		\la g \ra_{S\ke}\la \overline{h} \ra_{S\ke}\right|
		&\leq
		\left(\suml_{k\in\N} \capty(D\ke)|\la g \ra_{S\ke}|^2\right)^{1/2} 
		\left(\suml_{k\in\N}\capty(D\ke)|\la \overline{h} \ra_{S\ke}|^2\right)^{1/2}
		\\\notag 
		&\leq
		\left(\suml_{k\in\N} {\capty(D\ke)\over \area(S\ke)}\|g\|^2_{\L(S\ke)} \right)^{1/2} 
		\left(\suml_{k\in\N} {\capty(D\ke)\over \area(S\ke)}\|h\|^2_{\L(S\ke)}\right)^{1/2}
		\\\notag 
		&\leq C \sup_{k\in\N}\gamma\ke ^{1/2}\|g\|_{\L(\Gamma)}\|h\|_{\L(\Gamma)}
		\leq C \sup_{k\in\N}\gamma\ke ^{1/2}\|g\|_{\H^{3/2}(\Gamma)}\|h\|_{\H^{1/2}(\Gamma)}.
	\end{align}
	Thus    \eqref{assump:main} holds with $\gamma\equiv 0$ and $\kappa\e=C \sup_{k\in\N}\gamma\ke ^{1/2}$.

	\subsection{Regularly distributed holes}
	In the second case we consider the holes whose location is subordinate to 
	some partition of $\Gamma$ and show that their sizes can be chosen such a way
	that \eqref{assump:main} holds with any predefined $\gamma$.
	
	For simplicity, we assume that $n\ge 3$. Furthermore, we assume that the holes $D\ke\subset\Gamma$ are balls, i.e. $D\ke=\B(d\ke,\rho\ke)\cap\Gamma$. 
	As before, we assume that there exist a sequence $\left\{\rho\ke,\ k\in \N\right\}$ of positive numbers such that \eqref{assump:1}, \eqref{assump:2} hold.
	Additionally, we assume that there exists  a family $\left\{\Gamma\ke,\ k\in \N\right\}$ of relatively open  subsets of $\Gamma$ 
	such that  
	\begin{align}
		\label{Gammake:cond1}
		&\overline{B\ke\cap \Gamma}\subset\Gamma\ke\quad \text{(recall that $B\ke= \B(\rho\ke,x\ke)$)},
		\\
		\label{Gammake:cond2}
		&\Gamma_{k,\varepsilon}\cap \Gamma_{l,\varepsilon}=\emptyset\text{ if }k\not=l,
		\\
		\label{Gammake:cond3}
		&
		\cup_{k\in\N}\overline{\Gamma\ke}=\Gamma,
		\\
		&\label{Gammake:cond4}
		\exists m\in\N\ \forall x\in\Gamma:\  \#(\{k\in\N:\ x\in\conv(\Gamma\ke)\})\leq m,
		\\
		&
		\label{r:cond}
		{r}\ke\leq C\rho\ke,
	\end{align}
	where ${r}\ke\ceq\diam(\Gamma\ke)$, by
	$\conv(\Gamma\ke)$ we denote   the convex hull of $\Gamma\ke$, and $\#(\dots)$ stands for the cardinality of the enclosed set (that is, if all  $\Gamma\ke$ are convex, then \eqref{Gammake:cond4} holds with $m=1$).
	The above conditions are illustrated on Figure~\ref{fig2}.
	
	\begin{figure}[h]
		\begin{picture}(400,150) \centering
			\scalebox{1}{
				\includegraphics{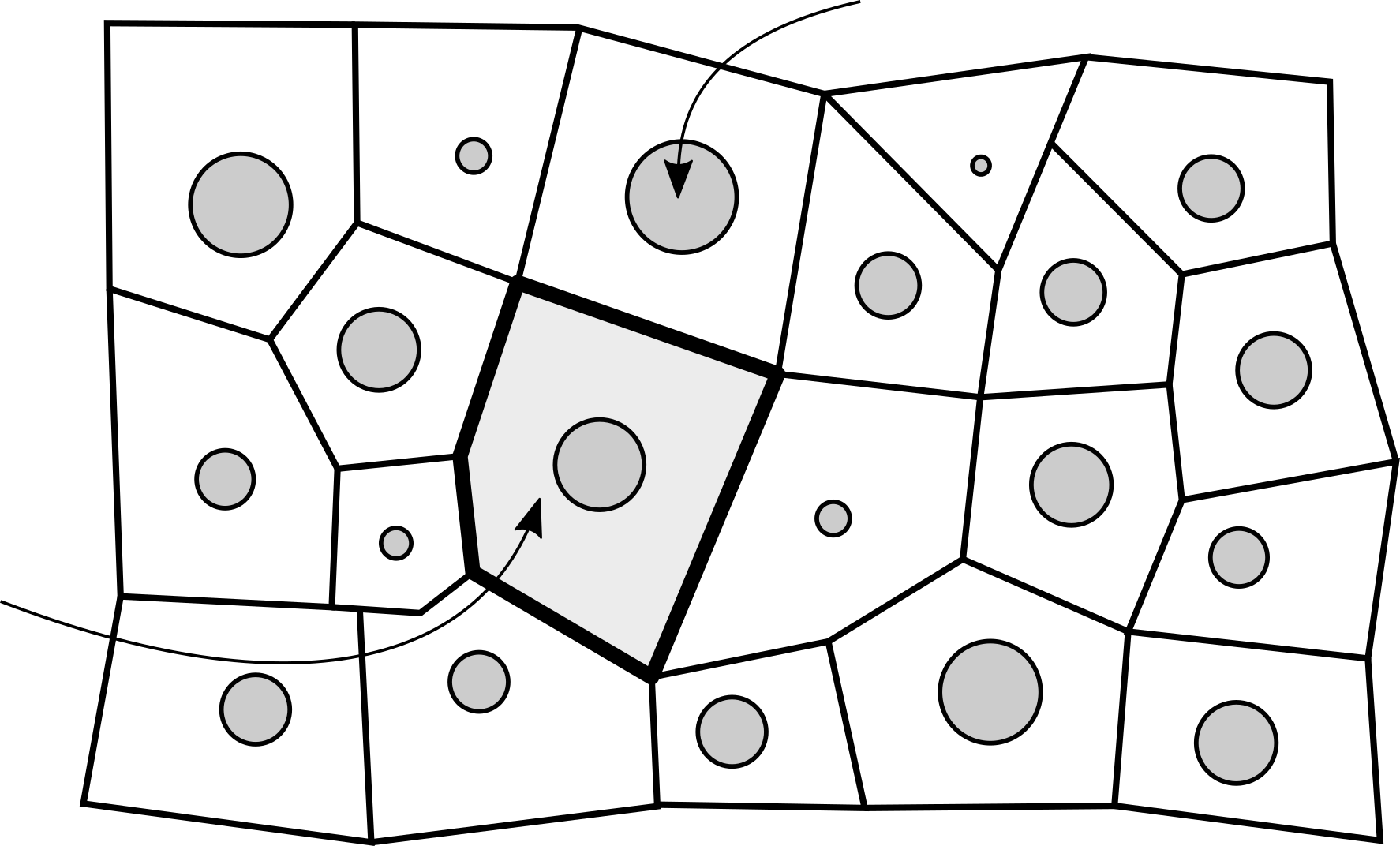}\qquad\qquad\qquad
				\includegraphics{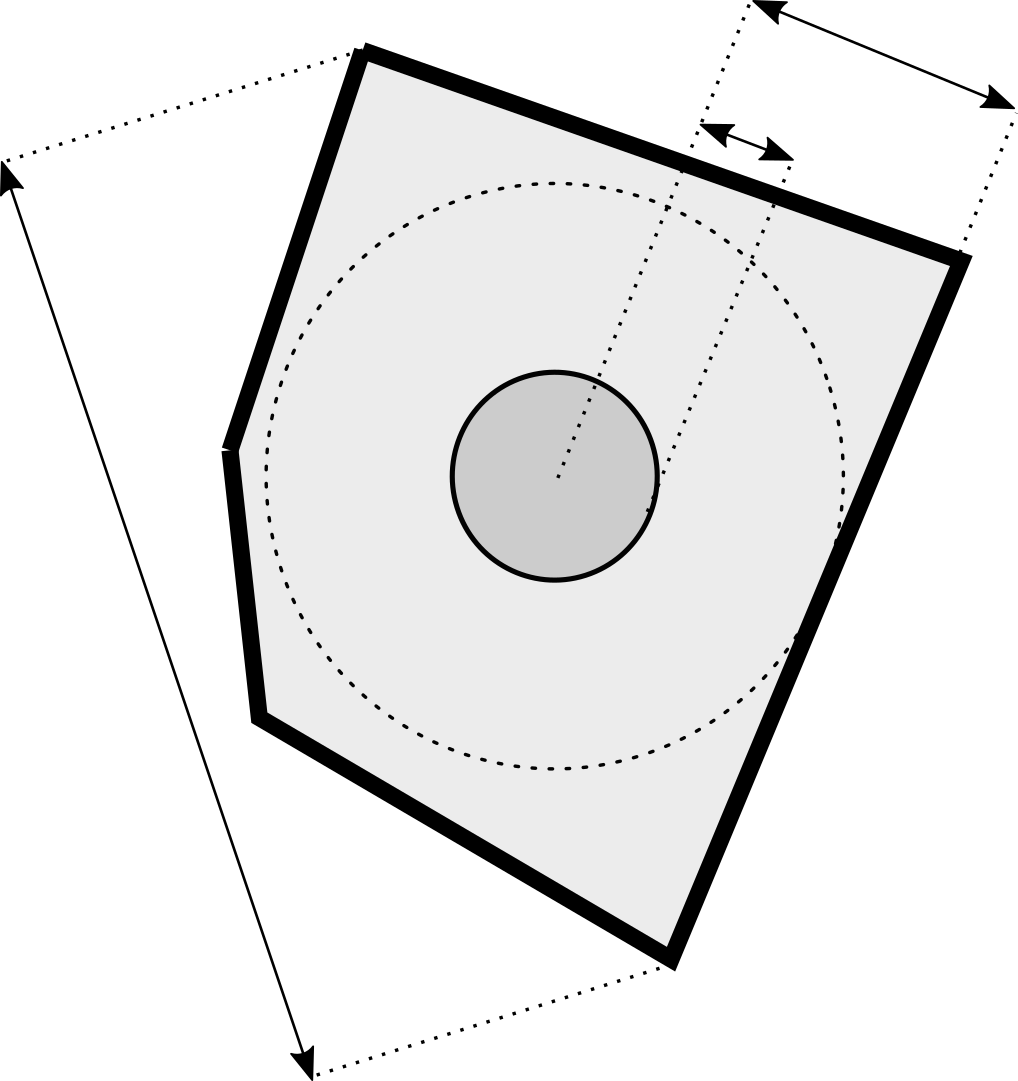}
				
				\put(-420,140){\emph{(a)}}
				\put(-130,140){\emph{(b)}}

				\put(-419,38){$\Gamma\ke$}
				\put(-269,127){$D\ke$}
				
				\put(-115,45){\rotatebox{20}{$^{r\ke}$}}
				\put(-21,131){\rotatebox{-20}{$_{\rho\ke}$}}
				\put(-37,115){\rotatebox{-20}{$^{d\ke}$}}}
			
		\end{picture}
		\caption{\emph{(a)} The ``sieve'' $\Gamma\e$ and a possible choice of the sets $\Gamma\ke$ satisfying \eqref{Gammake:cond1}--\eqref{Gammake:cond3}. 
			\emph{(b)} The set $\Gamma\ke$ and its relation with the numbers $d\ke$, $\rho\ke$ and $r\ke$. 
		}\label{fig2}
	\end{figure} 	
	
	We specify the numbers $d\ke$. Let $\gamma\in C^1(\Gamma)\cap\mathsf{W}^{1,\infty}(\Gamma)$ be arbitrary positive function. Set
	\begin{gather}\label{dke}
		d\ke=(4\al^{-1} \gamma(x\ke)\area(\Gamma\ke))^{1/(n-2)},
	\end{gather}
	where the constant $\al$ is the Newtonian capacity of the set $\B(1,0)\cap\Gamma $, i.e.
	\begin{gather*}
		\al=\inf_U\|\nabla U\|_{\L(\R^n)}^2,
	\end{gather*}
	where the infimum is taken over $U\in  {C}_0^\infty(\R^n)$ being
	equal to $1$ on a neighbourhood of $\B(1,0)\cap\Gamma$ (with the neighbourhood depending upon $U$).
	Due to \eqref{r:cond} (which implies $\area(\Gamma\ke)\leq C\rho\ke^{n-1}$)  this
	choice of $d\ke$ is indeed  admissible, i.e. the condition \eqref{assump:3} is  fulfilled.
	\smallskip
	
	We are in position to formulate the main result of this subsection.
	
	\begin{proposition}\label{prop:kappae}
		Let the assumptions \eqref{assump:1}, \eqref{assump:2}, \eqref{Gammake:cond1}--\eqref{dke} hold true. Then the condition \eqref{assump:main} is fulfilled with $\kappa\e=C\sup_{k\in\N}\rho\ke^{1/2}$.
	\end{proposition}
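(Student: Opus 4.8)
The plan is to reduce the inequality \eqref{assump:main} to a quantitative Riemann-sum estimate. The key point is that, with the calibration \eqref{dke}, the leading term of the capacity $\capty(D\ke)$ is exactly $4\gamma(x\ke)\area(\Gamma\ke)$, so the left-hand side of \eqref{assump:main} is, up to controllable errors, the Riemann sum $\suml_{k}\gamma(x\ke)\area(\Gamma\ke)\la g\ra_{S\ke}\la\overline h\ra_{S\ke}$ of $(\gamma g,h)_{\L(\Gamma)}$ associated with the partition $\{\Gamma\ke\}$ of $\Gamma$, and the error is of order $\sup_k\rho\ke^{1/2}$. Throughout I would freely use that \eqref{r:cond} and \eqref{Gammake:cond1} give $\diam P\ke\le C\rho\ke$, $c\rho\ke^{n-1}\le\area(S\ke)\le\area(P\ke)$ and $\area(\Gamma\ke)\le C\rho\ke^{n-1}$, where $P\ke\ceq\conv(\Gamma\ke)\subset\Gamma$, and that \eqref{eps0:2} together with \eqref{dke} give $(d\ke/\rho\ke)^{n-2}\le C\rho\ke$. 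As for the capacity, dilating by $d\ke$ about $x\ke$ in \eqref{capty:inf} turns $\capty(D\ke)$ into $d\ke^{n-2}c(\rho\ke/d\ke)$, where $c(R)$ is the capacity of the fixed flat disk $\B(1,0)\cap\Gamma$ relative to the ball $\B(R,0)$; one has $c(R)\ge\al$ for all $R$ (extension by zero), while truncating the Newtonian capacitary potential $W$ of $\B(1,0)\cap\Gamma$ at radius $R$ by a cut-off equal to $1$ on $\B(R/2,0)$, vanishing outside $\B(R,0)$, with gradient $O(R^{-1})$, and using the decay bounds $|W(x)|\le C|x|^{2-n}$, $|\nabla W(x)|\le C|x|^{1-n}$ for $|x|\ge1$, produces a competitor of energy $\le\al+CR^{2-n}$. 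Hence $0\le c(R)-\al\le CR^{2-n}$, so $\capty(D\ke)=\al d\ke^{n-2}+\mathcal{O}\bigl(d\ke^{n-2}(d\ke/\rho\ke)^{n-2}\bigr)$, and inserting \eqref{dke} (which gives ${1\over4}\al d\ke^{n-2}=\gamma(x\ke)\area(\Gamma\ke)$) yields $\bigl|\,{1\over4}\capty(D\ke)-\gamma(x\ke)\area(\Gamma\ke)\,\bigr|\le C\rho\ke^{n-1}(d\ke/\rho\ke)^{n-2}\le C\rho\ke^{n}$.

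Next, since by \eqref{Gammake:cond2}--\eqref{Gammake:cond3} the $\Gamma\ke$ form, up to a set of measure zero, a partition of $\Gamma$ (which is all that is needed), one has $(\gamma g,h)_{\L(\Gamma)}=\suml_k\int_{\Gamma\ke}\gamma\,g\,\overline h\,\d x'$, so the left-hand side of \eqref{assump:main} equals $|E_0+E_1|$ with $E_0$ collecting the capacity error and $E_1\ceq\suml_k\bigl(\gamma(x\ke)\area(\Gamma\ke)\la g\ra_{S\ke}\la\overline h\ra_{S\ke}-\int_{\Gamma\ke}\gamma\,g\,\overline h\,\d x'\bigr)$. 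For $E_0$, the bound $|\la g\ra_{S\ke}|^2\le\area(S\ke)^{-1}\|g\|_{\L(S\ke)}^2$ (and likewise for $h$), the capacity estimate above, Cauchy--Schwarz, and the disjointness of the $S\ke$ (from \eqref{assump:1}) give at once $|E_0|\le C\sup_k\rho\ke\,\|g\|_{\L(\Gamma)}\|h\|_{\L(\Gamma)}$. In $E_1$ I would first replace $\gamma$ by $\gamma(x\ke)$ inside the integral; using $|\gamma(x)-\gamma(x\ke)|\le C\rho\ke$ on $\Gamma\ke$, together with Cauchy--Schwarz and disjointness, the remainder is again $\le C\sup_k\rho\ke\,\|g\|_{\L(\Gamma)}\|h\|_{\L(\Gamma)}$, and one is left with $\suml_k\gamma(x\ke)\bigl(\area(\Gamma\ke)\la g\ra_{S\ke}\la\overline h\ra_{S\ke}-\int_{\Gamma\ke}g\,\overline h\,\d x'\bigr)$.

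On each cell, I would expand $g=m\ke^g+(g-m\ke^g)$ and $\overline h=\overline{m\ke^h}+(\overline h-\overline{m\ke^h})$, with $m\ke^g\ceq\la g\ra_{P\ke}$, $m\ke^h\ceq\la h\ra_{P\ke}$, both inside $\int_{\Gamma\ke}g\,\overline h$ and inside $\area(\Gamma\ke)\la g\ra_{S\ke}\la\overline h\ra_{S\ke}$; cancelling the $m\ke^g\overline{m\ke^h}$ contributions, the difference $\area(\Gamma\ke)\la g\ra_{S\ke}\la\overline h\ra_{S\ke}-\int_{\Gamma\ke}g\,\overline h$ becomes a sum of six bilinear remainders built from the factors $\area(\Gamma\ke)\le C\rho\ke^{n-1}$, $|m\ke^g|,|m\ke^h|\le C\rho\ke^{-(n-1)/2}\|\cdot\|_{\L(P\ke)}$, $|\la g\ra_{S\ke}-m\ke^g|$, $|\la h\ra_{S\ke}-m\ke^h|$, $\bigl|\int_{\Gamma\ke}(g-m\ke^g)\bigr|$, $\bigl|\int_{\Gamma\ke}(\overline h-\overline{m\ke^h})\bigr|$, $\|g-m\ke^g\|_{\L(\Gamma\ke)}$, $\|h-m\ke^h\|_{\L(\Gamma\ke)}$. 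These I would control with the Poincar\'e inequality on the convex set $P\ke$, $\|g-m\ke^g\|_{\L(P\ke)}\le C\rho\ke\|\nabla g\|_{\L(P\ke)}$, and --- since $h$ is only in $\H^{1/2}$ --- the elementary fractional Poincar\'e inequality $\|h-m\ke^h\|_{\L(P\ke)}^2\le\tfrac{\diam(P\ke)^n}{2\area(P\ke)}\int_{P\ke}\int_{P\ke}\tfrac{|h(x)-h(y)|^2}{|x-y|^n}\,\d x\,\d y\le C\rho\ke\,|h|_{\H^{1/2}(P\ke)}^2$, where $|h|_{\H^{1/2}(P\ke)}$ is the Gagliardo seminorm. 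A direct count of the powers of $\rho\ke$ shows each of the six remainders is $\le C\rho\ke^{1/2}\|g\|_{\H^1(P\ke)}\|h\|_{\H^{1/2}(P\ke)}$ (here $\rho\ke<1$, cf.~\eqref{eps0:1}, absorbs the surplus powers $\rho\ke$ and $\rho\ke^{3/2}$ that appear). Summing over $k$, bounding $\gamma(x\ke)\le\|\gamma\|_{\LL^\infty(\Gamma)}$, applying Cauchy--Schwarz, and using that by \eqref{Gammake:cond4} the convex hulls $P\ke$ have overlap at most $m$ --- whence $\suml_k\|g\|_{\H^1(P\ke)}^2\le m\|g\|_{\H^1(\Gamma)}^2$ and, by the standard double-counting argument for the Gagliardo norm, $\suml_k\|h\|_{\H^{1/2}(P\ke)}^2\le m\|h\|_{\H^{1/2}(\Gamma)}^2$ --- one obtains $|E_1|\le C\sup_k\rho\ke^{1/2}\|g\|_{\H^1(\Gamma)}\|h\|_{\H^{1/2}(\Gamma)}$. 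Combining with the $E_0$ bound and using $\rho\ke\le\rho\ke^{1/2}$ and $\|g\|_{\H^1(\Gamma)}\le\|g\|_{\H^{3/2}(\Gamma)}$ gives \eqref{assump:main} with $\kappa\e=C\sup_{k\in\N}\rho\ke^{1/2}$.

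I expect the main obstacle to be twofold. First, one must extract the \emph{rate} $O(R^{2-n})$ in the capacity asymptotics rather than a mere $o(1)$: the latter would only give $\kappa\e=o(1)$, not $O(\sup_k\rho\ke^{1/2})$, and getting the sharp decay requires the precise behaviour of the Newtonian capacitary potential at infinity. Second, in the main term the ``rough'' factor $h\in\H^{1/2}(\Gamma)$ has no $\L^2$-gradient, so one is forced to replace a classical Poincar\'e estimate by a fractional one; one then has to verify that the various powers of $\rho\ke$ produced by the six bilinear remainders still combine to exactly the claimed $\rho\ke^{1/2}$.
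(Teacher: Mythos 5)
Your argument is correct and reaches the stated rate, but it differs from the paper's proof in one essential technical respect: the treatment of the low\hyp{}regularity factor $h\in\H^{1/2}(\Gamma)$. The paper handles $h$ by applying a bounded right inverse $\mathscr{E}\colon\H^{1/2}(\Gamma)\to\H^1(O^+)$ of the trace map and working with the three\hyp{}dimensional prisms $\wh P\ke\subset O^+$; all Poincar\'e\hyp{} and mean\hyp{}comparison estimates (Lemmas~\ref{lemma:aux} and \ref{lemma:<>}) are then classical $\H^1$-estimates for the lifted function $\mathscr{E}h$, and the bounded\hyp{}overlap hypothesis \eqref{Gammake:cond4} is used via the straightforward $\L^2$-subadditivity \eqref{m:est}. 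You instead stay entirely on $\Gamma$: you replace the classical Poincar\'e inequality by the fractional Poincar\'e inequality on the flat convex cells $\conv(\Gamma\ke)$, controlled by the Gagliardo $\H^{1/2}$-seminorm, and then invoke the bounded\hyp{}overlap property again, this time to sum the localized Gagliardo seminorms back to $|h|_{\H^{1/2}(\Gamma)}^2$ by the double\hyp{}counting argument. This is a genuinely more intrinsic route; it avoids the extension operator at the cost of the slightly less textbook subadditivity of the Gagliardo seminorm, and it gives a cleaner picture of why the exponent $1/2$ in $\kappa\e=C\sup_k\rho\ke^{1/2}$ appears. Your capacity estimate is substantively the same as the paper's Lemma preceding \eqref{P1-5} --- both truncate the Newtonian capacitary potential of $D\ke$ by a cut\hyp{}off supported in $B\ke$ and use the pointwise decay $|V\ke|\lesssim d\ke^{n-2}|x-x\ke|^{2-n}$, $|\nabla V\ke|\lesssim d\ke^{n-2}|x-x\ke|^{1-n}$ to get the error $O(d\ke^{n-2}\rho\ke)$, you simply phrase it in rescaled variables. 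Your five\hyp{}stage Riemann\hyp{}sum decomposition (capacity error, freezing $\gamma$, and the six bilinear remainders from expanding around the cell means) reorganizes the paper's splitting into $P^1_\eps,\dots,P^5_\eps$ but is equivalent in content. Two small bookkeeping points: the bound $(d\ke/\rho\ke)^{n-2}\le C\rho\ke$ you state is a consequence of \eqref{dke} together with \eqref{r:cond} (namely $d\ke^{n-2}\lesssim\area(\Gamma\ke)\lesssim\rho\ke^{n-1}$), not of \eqref{eps0:2}; and the estimate $|\la h\ra_{S\ke}-\la h\ra_{P\ke}|^2\lesssim\rho\ke^{2-n}|h|_{\H^{1/2}(P\ke)}^2$, which you use implicitly in the terms containing $\la\tilde h\ra_{S\ke}$, follows from $S\ke\subset P\ke$, the elementary bound $|\la\tilde h\ra_{S\ke}|\le\area(S\ke)^{-1/2}\|\tilde h\|_{\L(P\ke)}$ and your fractional Poincar\'e inequality, so no separate trace lemma is needed.
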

	
	\begin{proof}
		First, we introduce several new sets:	
		\begin{itemize} 	
			\item $\wh\Gamma\ke\ceq\conv(\Gamma\ke)$,
			\item $ P\ke{{}}\ceq  \left\{x=(x',x^n)\in\R^n:\ x'\in \Gamma\ke,\ 0<|x^n|<{\rho}\ke,\  x^n>0\right\}$,
			\item $\wh P\ke{{}}\ceq  \left\{x=(x',x^n)\in\R^n:\ x'\in \wh\Gamma\ke,\ 0<|x^n|<{\rho}\ke,\  x^n>0\right\}$,
		\end{itemize}
		Due to \eqref{eps0:1+}, one has $\wh P\ke\subset O^+$. 
		Furthermore,  by virtue of \eqref{Gammake:cond4}, one has the  estimate
		\begin{gather}\label{m:est}
			\forall f\in\L(O^+):\ \sum_{k\in\N}\|f\|^2_{\L(\wh P\ke)}\leq  m\|f\|^2_{\L(\cup_{k\in\N}\wh P\ke)} \leq m \|f\|^2_{\L(O^+)}.
		\end{gather}
		
		\begin{lemma}
			\label{lemma:aux}
			One has
			\begin{align}\label{lemma:aux:1}
				\forall v\in \H^1(\wh P\ke{{}}):&&
				\|v\|^2_{\L(\wh\Gamma\ke)}\leq
				2\rho\ke^{-1}\|v\|^2_{\L(\wh P\ke{{}})}+2\rho\ke\|\nabla v\|^2_{\L(\wh P\ke{{}})} ,\\
				\label{lemma:aux:2}
				\forall v\in \H^1(\wh P\ke{{}}):&&
				\|v-\langle v \rangle_{\wh P\ke}\|^2_{\L(\wh P\ke)}\leq
				C \rho\ke^2 \|\nabla v\|^2 _{\L(\wh P\ke{{}})}.
			\end{align}
		\end{lemma}
		
		\begin{proof}
			It is enough to proof \eqref{lemma:aux:1} for $v\in C^\infty(\overline{\wh P\ke{{}}})$.
			Let $ (x',0)\in\wh\Gamma\ke$ and $(x',z)\in\wh P\ke$ with $z\in (0,\rho\ke)$.
			One has
			\begin{gather*}
				v(x',0)=v(x',z)-\int_{0}^{z}{\partial v\over\partial x^n}(x',\tau)\d\tau,    
			\end{gather*}
			whence, using the Cauchy-Schwarz inequality, we get
			\begin{gather*}
				|v(x',0)|^2\leq 2|v(x',z)|^2+2\rho\ke\int_{0}^{\rho\ke}\left|{\partial v\over\partial x^n}(x',\tau)\right|^2\d\tau\leq
				2|v(x',z)|^2+2\rho\ke\int_{0}^{\rho\ke}\left|\nabla v(x',\tau)\right|^2\d\tau.
			\end{gather*}
			Integrating this inequality over $z\in (0,\rho\ke)$ and over $x'\in\wh\Gamma\ke$, then dividing by $\rho\ke$, 
			we arrive at the  estimate \eqref{lemma:aux:1}. 
			
			To prove the estimate \eqref{lemma:aux:2} we need the following result \cite[Theorem~3.2]{Be03}:
			let $D$ be a convex domain, then 
			$$\|v-\langle v \rangle_{D}\| _{\L(D)}\leq
			{\diam(D)\over \pi} \|\nabla v\| _{\L(D)},\ \forall v\in \H^1(D).$$
			Applying it for $D= \wh P\ke$ (evidently, this set is complex)   and taking into account 
			(cf.~\eqref{r:cond})
			that $\diam(\wh P\ke)\leq C\rho\ke$,  
			we immediately get the required estimate \eqref{lemma:aux:2}. The lemma is proven.
		\end{proof}

		\begin{lemma}\label{lemma:<>}
			One has
			\begin{align}
				\label{est<>2}
				\forall v\in \H^1(\wh P\ke{{}}):&\quad \left|\langle v \rangle_{\Gamma\ke{{}}}-\langle v \rangle_{\wh P\ke{{}}}\right|^2
				\leq {C_1 {\rho}\ke^{2-n}}
				\|\nabla v\|^2_{\L(\wh P\ke{{}})}. 
				\\
				\label{lemma:<>:est}
				\forall v\in \H^1(\wh P\ke{{}}):&\quad
				\left|\langle v \rangle_{\Gamma\ke}-\langle v \rangle_{S\ke}\right|^2\leq
				C{\rho}\ke^{2-n}\|\nabla v\|^2_{\L(\wh P\ke{{}})}.
			\end{align}
		\end{lemma}
		
		\begin{proof}
			Using the Cauchy-Schwarz inequality, the estimates \eqref{lemma:aux:1}--\eqref{lemma:aux:2}, 
			and taking into account that $\area(\Gamma\ke)\ge C{\rho}\ke^{n-1}$,
			we get
			\begin{align}\notag
				&
				\left|\langle v \rangle_{\Gamma\ke{{}}}-\langle v \rangle_{\wh P\ke{{}}}\right|^2=
				\left|\langle v -\langle v \rangle_{\wh P\ke{{}}}\rangle_{\Gamma\ke{{}}}\right|^2\\\notag
				&\qquad
				\leq
				{1\over \area(\Gamma\ke)}\|v-\langle v \rangle_{\wh P\ke{{}}}\|^2_{\L(\Gamma\ke)}\leq
				{1\over \area(\Gamma\ke)}\|v-\langle v \rangle_{\wh P\ke{{}}}\|^2_{\L(\wh \Gamma\ke)}
				\\
				&\qquad
				\leq {C\over \area(\Gamma\ke)}
				\left({\rho}\ke^{-1}\|v-\langle v \rangle_{\wh P\ke{{}}}\|^2_{\L(\wh P\ke{{}})}+
				{\rho}\ke\|\nabla v\|^2_{\L(\wh P\ke{{}})}\right) 
				\leq {C_1 {\rho}\ke^{2-n}}
				\|\nabla v\|^2_{\L(\wh P\ke{{}})}.\label{est<>2+}
			\end{align}	
			To proceed further we need an auxiliary	result from \cite{Kh09}:	
			let $D\subset\R^n$ be a bounded convex domain, $D_1$, $D_2$ be arbitrary measurable subsets of $D$ with $\vol(D_1)\not=0$, $\vol(D_2)\not=0$, then
			\begin{gather*}
				\forall v\in \H^1(D):\quad \left|\langle v \rangle_{D_1}-\langle v \rangle_{D_2}\right|^2\leq
				C{(\mathrm{diam}(D))^{n+2}\over \vol(D_1)\cdot \vol(D_2)}\|\nabla v\|^2_{\L(D)},
			\end{gather*}
			where  the constant $C>0$ depends only on $n$.	
			Applying this estimate for $D\ceq \wh P\ke{{}}$, $D_1\ceq \wh P\ke{{}}$ and $D_2\ceq B\ke^+$
			and taking into account that $\diam(\wh P\ke )\leq C{r}\rho\ke$ and $\vol(\wh P\ke{{}})\geq \vol(B\ke^+)=C\rho\ke^{n}$, we
			obtain
			\begin{gather}\label{est<>1}
				\forall v\in \H^1(\wh P\ke{{}}):\quad 
				\left|\langle v \rangle_{\wh P\ke{{}}}-\langle v \rangle_{B\ke^+}\right|^2\leq
				C{\rho}\ke^{2-n}\|\nabla v\|^2_{\L(\wh P\ke{{}})}.    
			\end{gather}
			Combining \eqref{est<>2+}, \eqref{est<>1} and \eqref{mean:mean}, we arrive at the   estimate 
			\eqref{lemma:<>:est}. The lemma is proven.
		\end{proof}
		
		\begin{lemma}
			One has:
			\begin{gather}\label{alalal}
				\al d\ke^{n-2}\leq \capty(D\ke) \leq    \al d\ke^{n-2}(1+C\rho\ke). 
			\end{gather}
		\end{lemma}
		
		\begin{proof}
			Let $\al\e$ be the  Newtonian capacity of the set $D\ke=\B(d\ke,x\ke)\cap\Gamma $,
			i.e.
			\begin{gather}\label{ale:def}
				\al\e=\inf_U\|\nabla U\|_{\L(\R^n)}^2,
			\end{gather}
			where the infimum is taken over $U\in  {C}_0^\infty(\R^n)$ being
			equal to $1$ on a neighbourhood of $D\ke$.
			Simple re-scaling arguments yield
			\begin{gather}\label{al:al}
				\al\e = \al d\ke^{n-2}.    
			\end{gather}
			
			Let $U\ke$ be the solution to the problem \eqref{BVP:cap} extended by zero to the whole $\R^n$.
			For each $\delta>0$ there exists $U^\delta\ke\in C^\infty_0(\R^n)$ such that
			$\|\nabla U\ke - \nabla U\ke^\delta\|_{\L(\R^n)}\leq \delta $ and 
			$U\ke^\delta=1$ on a neighborhood of $D\ke$. 
			Using \eqref{ale:def} and \eqref{capty:eq},
			we get  
			\begin{gather*}
				\al\e^{1/2} \leq \|\nabla U\ke^\delta\|_{\L(\R^n)}\leq
				 \|\nabla U\ke\|_{\L(\R^n)}+\delta=\sqrt{\capty(D\ke)}+\delta,
			\end{gather*}
			whence, sending $\delta\to0$, we obtain
			\begin{gather}\label{al:al+}
				\al\e  \leq  \capty(D\ke).
			\end{gather}
		    Combining \eqref{al:al} and \eqref{al:al+} we get the first estimate in \eqref{alalal}.
			
			To prove the second estimate   we 
			introduce the function $V\ke$ -- the solution to the problem
			\begin{gather*}
				\begin{cases}
					\Delta V=0,&x\in \R^n\setminus\overline{D\ke},\\
					V=1,&x\in\partial D\ke =\overline{D\ke},\\
					V\to 0,&|x|\to \infty.
				\end{cases}    
			\end{gather*} 
			It is well-known that 
			\begin{gather*}
				\al\e=\|\nabla V\ke\|^2_{\L(\R^n)}.   
			\end{gather*}
			Recall that the function $\psi\ke$ is defined in \eqref{phipsi}.
			Using \eqref{capty:inf} and taking into account that
			$ V\ke\psi\ke\in\H^1_0(B\ke)$ and $(V\ke\psi\ke)\restriction_{\overline{D\ke}}=1 $, we get
			\begin{gather}\label{cap<al}
				\capty(D\ke) \leq \|\nabla (V\ke\psi\ke)\|^2_{\L(B\ke)}= \|\nabla (V\ke\psi\ke)\|^2_{\L(\R^n)}=
				\|\nabla V\ke\|^2_{\L(\R^n)}+R\e=\al\e + R\e,
			\end{gather}    
			where  $R\e=R\e^1+R\e^2$ with
			$$
			R\e^1\ceq \|\nabla (V\ke(\psi\ke-1))\|^2_{\L(\R^n)},\quad R\e^2\ceq2(\nabla V\ke,\nabla (V\ke(\psi\ke-1)))_{\L(\R^n)}.
			$$
			The function $V\ke$ obeys the following estimates:
			\begin{gather}\label{VV:ests}
				\forall x\in \R^n\setminus
				\B({\rho\ke\over 4},x\ke):\quad |V\ke(x)|\leq Cd\ke^{n-2}|x-x\ke|^{2-n},\quad
				|\nabla V\ke(x)|\leq Cd\ke^{n-2}|x-x\ke|^{1-n}.
			\end{gather}
			(the proof is similar to the proof of \eqref{H:estimates:1}--\eqref{H:estimates:2}, an alternative proof can be found in \cite[Lemma~2.1]{MK06}).
			Using \eqref{VV:ests} and  the properties
			\begin{gather*}
			0\leq \psi\ke\leq 1,
			\quad 
			|\nabla\psi\ke|\leq C\rho\ke^{-1},
			\\
			\supp(1-\psi\ke)  \subset  {\R^n\setminus
				\B({\rho\ke\over 4},x\ke)},\quad
			\supp(\nabla\psi\ke)\subset\overline{B\ke\setminus \B({\rho\ke\over 4},x\ke)},
			\end{gather*}
			we obtain
			\begin{gather}
				\label{R1}
				|R\e^1|\leq 
				C\left(\|\nabla V\ke\|^2_{\L(\R^n\setminus
					\B({\rho\ke\over 4},x\ke)))}+
				\rho\ke^{-2}\|V\ke\|^2_{\L(B\ke\setminus
					\B({\rho\ke\over 4},x\ke))}\right)
				\leq C_1 d\ke^{2(n-2)}\rho\ke^{2-n}.
			\end{gather}
			Similarly,
			\begin{gather}
				\label{R2}
				|R\e^2|\leq 
				C\|\nabla V\ke\|^2_{\L(\R^n\setminus \B({\rho\ke\over 4},x\ke))}\sqrt{|R\e^1|}
				\leq C_1 d\ke^{2(n-2)}\rho\ke^{2-n}.
			\end{gather}
			Combining \eqref{R1}--\eqref{R2} and taking into account that 
			$d\ke^{n-2}\leq C\rho\ke^{n-1}$, we finally get
			\begin{gather}
				\label{R}
				R\e\leq Cd\ke^{2(n-2)}\rho\ke^{2-n}\leq C_1d\ke^{n-2}\rho\ke.
			\end{gather}
			The desired estimate follows from \eqref{al:al}, \eqref{cap<al}, \eqref{R}.
			The lemma is proven.
		\end{proof}
		
		Now, we proceed to the proof of Proposition~\ref{prop:kappae}.	
		Let 
		$g\in \H^{3/2}(\Gamma)$ and
		$h\in \H^{1/2}(\Gamma)$. 
		Taking into account \eqref{Gammake:cond2}--\eqref{Gammake:cond3} one can represent the left-hand-side of \eqref{assump:main} as follows:
		\begin{gather}\label{P1-5}
			{1\over 4}\suml_{k\in\N}\capty(D\ke)
			\la g \ra_{S\ke}\la \overline{h} \ra_{S\ke}-(\gamma g,h)_{\L(\Gamma)}=
			\sum_{k=1}^5 P\e^k,
		\end{gather}
		where
		\begin{align*}
			P\e^1&\ceq
			{1\over 4}\suml_{k\in\N}\capty(D\ke)
			\la g \ra_{S\ke}\big(\la \overline{h} \ra_{S\ke}-\la \overline{h} \ra_{\Gamma\ke}\big),\\
			P\e^2&\ceq
			{1\over 4}\suml_{k\in\N}\capty(D\ke)
			\big(\la g \ra_{S\ke}-\la g \ra_{\Gamma\ke}\big)\la \overline{h} \ra_{\Gamma\ke},
			\\
			P\e^3&\ceq\suml_{k\in\N}\left({1\over 4}\capty(D\ke)
			-\gamma(x\ke)\area(\Gamma\ke)\right)\la g\ra_{\Gamma\ke}\la \overline{h} \ra_{\Gamma\ke},
			\\
			P\e^4&\ceq\suml_{k\in\N}
			\int_{\Gamma\ke} \gamma(x\ke)g(x)\left(\la \overline{h} \ra_{\Gamma\ke}- \overline{h(x)}\right)\d x',
			\\
			P\e^5&\ceq
			\suml_{k\in\N}
			\int_{\Gamma\ke}\left(\gamma(x\ke) - \gamma(x)\right)g(x)\overline{h(x)}\d x'.
		\end{align*}

		It is well-know (see, e.g., \cite[Theorem~3.1]{LP20}) that there exists a bounded linear operator $\mathscr{E}:\H^{1/2}(\Gamma)\to\H^1(O^+)$, which is a right inverse of the trace operator
		$\H^1(O¨^+)\to  \H^1(\Gamma)$.
		Then, using the Cauchy-Schwarz inequality,  \eqref{lemma:<>:est}, \eqref{m:est}
		and the fact (cf.~\eqref{capty:bound}) that 
		$$
		{\capty(D\ke)}\leq C\rho\ke^{n-1},\quad \area(S\ke)=C_1\rho\ke^{n-1},\quad \area(\Gamma\ke)\geq C\rho\ke^{n-1},  
		$$
		we get
		\begin{align}\notag
			|P\e^1|&\leq {1\over 4}
			\left(\suml_{k\in\N}\capty(D\ke)|\la g \ra_{S\ke}|^2\right)^{1/2}
			\left(\suml_{k\in\N}\capty(D\ke)\left|\la \overline{h} \ra_{S\ke}-\la \overline{h} \ra_{\Gamma\ke}\right|^2\right)^{1/2}
			\\\notag
			&\leq C\left(\suml_{k\in\N}\|g\|^2_{\L(S\ke)}\right)^{1/2}
			\left(\suml_{k\in\N}\rho\ke\|\nabla (\mathscr{E}h)\|^2_{\L(\wh P\ke)} \right)^{1/2}
			\\ 
			&\leq C_1\sup_{k\in\N}\rho\ke^{1/2}
			\|g\|_{\L(\Gamma)}\|\nabla (\mathscr{E}h)\|_{\L(\Omega^+)}
			\leq C_2\sup_{k\in\N}\rho\ke^{1/2}
			\|g\|_{\L(\Gamma)}\|h\|_{\H^{1/2}(\Gamma)},
			\label{P1}
		\end{align}
		and, similarly,  
		\begin{gather}\label{P2}
			|P\e^2|\leq	C\sup_{k\in\N}\rho\ke^{1/2}
			\|g\|_{\H^{1/2}(\Gamma)}\|h\|_{\L(\Gamma)}.
		\end{gather}
		Further, by virtue of \eqref{dke}, \eqref{alalal} and \eqref{assump:3} one has 
		$$\left|{1\over 4}\capty(D\ke)
		-\gamma(x\ke)\area(\Gamma\ke)\right|= {1\over 4}\left|\capty(D\ke)-\al d\ke^{n-2}\right|\leq Cd\ke^{n-2}\rho\ke\leq C_1\rho\ke^{n},$$ 
		whence, taking into account that $\area(\Gamma\ke)\geq C\rho\ke^{n-1}$, we get
		\begin{gather}
			|P\e^3|\leq C\left(\sum_{k\in\N}\rho\ke^n|\la g\ra_{\Gamma\ke}|^2\right)^{1/2}
			\left(\sum_{k\in\N}\rho\ke^n|\la h\ra_{\Gamma\ke}|^2\right)^{1/2}\leq
			C\sup_{k\in\N}\rho\ke\|g\|_{\L(\Gamma)}\|h\|_{\L(\Gamma)}.\label{P3}
		\end{gather}
		Using \eqref{m:est}--\eqref{est<>2}, we obtain
		\begin{align}\notag
			|P\e^4|&\leq C\left(\sum_{k\in\N}\|g\|^2_{\L(\Gamma\ke)}\right)^{1/2}
			\left(\sum_{k\in\N}\|h-\la h\ra_{\Gamma\ke}\|^2_{\L(\Gamma\ke)}\right)^{1/2}
			\\\notag
			&\leq C_1\|g\|_{\L(\Gamma)}\left(\sum_{k\in\N}\left(\rho\ke^{-1}
			\|\mathscr{E}h-\la h\ra_{\Gamma\ke}\|^2_{\L(\wh P\ke)}+ \rho\ke\|\nabla \mathscr{E}h\|^2_{\L(\wh P\ke)}\right)\right)^{1/2}
			\\\notag
			&\leq C_1\|g\|_{\L(\Gamma)}\Bigg(\sum_{k\in\N}\Big(2\rho\ke^{-1}
			\|\mathscr{E}h-\la \mathscr{E} h\ra_{\wh P\ke}\|^2_{\L(\wh P\ke)}+
			2\rho\ke^{-1}|\la \mathscr{E}h\ra_{\wh P\ke}-\la h\ra_{\Gamma\ke}|^2\vol(\wh P\ke)+
			\\\notag
			& \rho\ke\|\nabla \mathscr{E}h\|^2_{\L(\wh P\ke)}\Big)\Bigg)^{1/2}
			\\\notag
			&\leq C_2\|g\|_{\L(\Gamma)} \left(\sum_{k\in\N}\rho\ke\|\nabla \mathscr{E}h\|^2_{\L(\wh P\ke)}\right)^{1/2}
			\leq C_3\sup_{k\in\N}\rho\ke^{1/2}\|g\|_{\L(\Gamma)}\|\nabla \mathscr{E}h\|_{\L(O^+)}
			\\
			&\leq C_4\sup_{k\in\N}\rho\ke^{1/2}\|g\|_{\L(\Gamma)}\| h\|_{\H^{1/2}(\Gamma)}.
			\label{P4}
		\end{align} 
		Finally, since $|\gamma(x)-\gamma(x\ke)|\leq C|x-x\ke|$ (recall that $\gamma \in C^{1}(\Gamma)\cap\mathsf{W}^{1,\infty}(\Gamma)$), we infer
		\begin{gather}\label{P5}
			|P\e^5|\leq C\sup_{k\in\N}\rho\ke\suml_{k\in\N}
			\|g\|_{\L(\Gamma)}\|h\|_{\L(\Gamma)}.	
		\end{gather}
		The statement of the proposition follows immediately from \eqref{P1-5}--\eqref{P5}.
	\end{proof}

	\section*{Acknowledgements}
	
	The author gratefully acknowledges financial support by the Czech Science Foundation (GA\v{C}R) through the project 22-18739S and by the research program ``Mathematical Physics and Differential Geometry'' of the Faculty of Science of the University of Hradec Kr\'alov\'e.
	The author thanks Jussi Behrndt and Vladimir Lotoreichik for useful suggestions.
	
	\bibliographystyle{abbrv}
	\bibliography{references}

\end{document}